\def\cl@chapter{\@elt {theorem}}
\definecolor{lavender}{rgb}{0.9, 0.9, 0.98}
\DeclareMathOperator*{\argmin}{argmin}
\colorlet{color1}{blue}
\colorlet{color2}{red!50!black}
\definecolor{ivory}{RGB}{218,215,203}
\definecolor{cuhkp}{RGB}{98,56,105} 	
\definecolor{cuhkpl}{RGB}{152,24,147} 	
\definecolor{cuhkb}{RGB}{219,160,1} 	
\definecolor{cuhkbd}{RGB}{178,129,0} 	
\definecolor{cuhkr}{RGB}{88,35,155}  	
\definecolor{blackp}{RGB}{0,0,0} 
\definecolor{redp}{RGB}{255,0,0}
\definecolor{orangep}{RGB}{255,128,0}
\definecolor{brownp}{RGB}{128,77,0}
\definecolor{yellowp}{RGB}{255,230,0}
\definecolor{greenp}{RGB}{128,230,0}
\definecolor{bluep}{RGB}{0,128,255}
\definecolor{purplep}{RGB}{152,24,147}
\definecolor{pinkp}{RGB}{230,0,128}        
\theoremstyle{thmstyleone}%
\newtheorem{thm}{Theorem}[section]
\newtheorem{lem}[thm]{Lemma}
\newtheorem{rem}[thm]{Remark}
\newtheorem{defn}[thm]{Definition}
\newcommand{\bR}{\mathbb{R}}
\newcommand*\vnorm[1]{\left\| #1\right\|}
\renewcommand {\AA}  { {A} }
\newcommand{\MR}{\mathrm{MINRES}}
\newcommand{\NPC}{\mathrm{NPC}}
\newcommand {\xx}  { { p} }
\newcommand {\rr}  { { r} }
\newcommand {\pp}  { { z} }
\newcommand {\vv}  { { v} }
\newcommand {\bb}  { { b} }
\newcommand {\dd}  { {d} }
\newcommand {\zero}  { {\bf 0} }
\newcommand{\iprod}[2]{\langle #1, #2 \rangle}
\newcommand{\red}[1]{\textcolor{red}{#1}}
\newcommand{\bN}{\mathbb{N}}
\newcommand{\wrt}{\text{w$.$r$.$t$.$}}
\newcommand {\dist}  { {\mathrm{dist}} }
\newcommand {\cA}  { {\mathsf{A}} }
\newcommand {\cB}  { {\mathsf{B}} }
\newcommand {\cZ}  { {\mathsf{Z}} }
\definecolor{MyRed}{RGB}{224,60,138} 	
\definecolor{DeepGreen}{RGB}{92,172,129} 	
\definecolor{DeepRed}{RGB}{167,67,67}  
\newcommand{\cmark}{{\color{DeepGreen}\ding{51}}}
\newcommand{\xmark}{{\color{DeepRed}\ding{55}}}    
\begin{document}

\title{A MINRES-based Linesearch Algorithm for Non- convex Optimization with Non-positive Curvature Detection
}

\titlerunning{MINRES-based Linesearch Algorithm with Non-positive Curvature Detection}        

\author{Hanfeng Zeng \and Yang Liu \and  Wenqing Ouyang \and Andre Milzarek
}

\authorrunning{Zeng, Liu, Ouyang, and Milzarek} 

\institute{
  Hanfeng Zeng  \& Andre Milzarek \at
  School of Data Science, The Chinese University of Hong Kong, Shenzhen, (CUHK-Shenzhen), Guangdong, 518172, P.R. China. 
  \and
  Wenqing Ouyang \at
  IEOR, Columbia University, New York, USA.
  (\url{wo2205@columbia.edu})  
  \and
  Yang Liu \at
  Mathematical Institute, University of Oxford, UK.
  (\url{yang.liu@maths.ox.ac.uk})           
  \and
  Corresponding author: Andre Milzarek (\url{andremilzarek@cuhk.edu.cn})
}

\date{Received: date / Accepted: date}

\maketitle

\begin{abstract}
We propose a MINRES-based Newton-type algorithm for solving unconstrained nonconvex optimization problems. Our approach uses the minimal residual method (MINRES), a well-known solver for indefinite symmetric linear systems, to compute descent directions that leverage second-order and non-positive curvature (NPC) information. Comprehensive asymptotic convergence properties are derived under standard assumptions. In particular, under the Kurdyka-{\L}ojasiewicz inequality and a mild NPC-detectability condition, we prove that our algorithm can avoid strict saddle points and converge to second-order critical points. This is primarily achieved by integrating proper regularization techniques and forward linesearch mechanisms along NPC directions. Furthermore, fast local superlinear convergence to potentially non-isolated minima is established, when the local Polyak-{\L}ojasiewicz condition is satisfied. Numerical experiments on the CUTEst test collection and on a deep auto-encoder problem illustrate the efficiency of the proposed method.

\keywords{Nonconvex optimization \and MINRES \and Non-positive curvature direction \and  Kurdyka-{\L}ojasiewicz property \and Forward linesearch}

\subclass{90C30 \and 90C06 \and 65K05}
\end{abstract}

\section{Introduction} \label{intro}
In this work, we consider the unconstrained optimization problem:
\begin{equation}
    \label{tar-func}
    \min_{x\in\mathbb{R}^n}~f(x),
\end{equation}
where $f : \bR^n \to \bR$ is twice continuously differentiable but not necessarily convex. Nonconvexity is a ubiquitous and important characteristic of many modern applications including, e.g., machine and deep learning \cite{bottou2018optimization}, statistics and regression \cite{draper1998applied}, and nonconvex M-estimators \cite{loh2013regularized}. Nonconvex problems are known to exhibit several different types of critical points. Among these, saddle points, i.e., critical points at which the Hessian contains non-positive curvature ($\NPC$) information, are typically undesirable, whereas local minima represent the intended target of optimization. Truncated Newton methods, equipped with suitable inner linear system solvers, constitute a classical approach to solve problem \eqref{tar-func}. Here, embedded linear system solvers allow handling and exploiting the second-order information of the underlying problem in an effective way. As a result, such methods not only identify critical points, but typically also have the capacity to escape saddle points. In this paper, we consider a linesearch-based truncated Newton method, where the descent direction $d$ is computed as the approximate solution of the system
\begin{equation} \label{sub: LS}
    Bd=-g, \quad g= \nabla f(x), \quad B \approx \nabla^2 f(x).
\end{equation}
Common choices for inner solvers to compute such $d$ include the conjugate gradient method (CG) \cite{hestenes1952methods}, the conjugate residual method (CR) \cite{stiefel1955relaxationsmethoden}, and the minimum residual method ($\MR$ or MR) \cite{paige1975solution}, among others. 

Recently, Liu and Roosta \cite{liu2022minres} have shown that $\MR$ can effectively detect $\NPC$ information and generate $\NPC$ directions at a low computational cost. Motivated by this favorable property, we develop a second-order-type algorithm that uses $\MR$ to solve a regularized version of \eqref{sub: LS}:
\begin{equation}\label{eq:subreg}
    \bar Bd=-g, \quad \text{where} \quad \bar{B} := B + \zeta I,
\end{equation}
and $\zeta $ is the regularization parameter. We establish various desirable, asymptotic properties of the proposed method including iterate convergence guarantees under the Kurdyka-{\L}ojasiewicz (KL) inequality, avoidance of strict saddle points, and superlinear convergence to non-isolated local minima. \\[2mm]
\noindent\textit{Related Work.} Truncated Newton methods are fundamental approaches that leverage second-order information at a relatively low cost. Classically, the conjugate gradient method (CG) \cite{hestenes1952methods} has been used as the inner solver in such algorithms to obtain inexact and cheaper solutions to Newton's equation, see  \cite{dembo1983truncated,royer2020newton}. In \cite{dahito2019conjugate}, Dahito and Orban use CR as an alternative to CG and show comparable performance. Furthermore, the authors in \cite{liu2021convergence,roosta2022newton} consider variants based on $\MR$, a more general form of CR where $B$ can be indefinite, but the analysis is limited to invex problems. In \cite{liu2022newton}, Liu and Roosta discuss non-asymptotic complexity bounds for such Newton-MR methods for general nonconvex problems. More recently, in \cite{lim2025complexity}, Lim and Roosta extend the overall setting and complexity bounds derived in \cite{liu2022newton} to Newton-type-MR methods with inexact Hessian information. To our knowledge, asymptotic convergence results for truncated $\MR$-based Newton methods currently do not seem to be available in the literature in the general nonconvex setting. 

Regularization represents another prominent strategy for globalizing and robustifying Newton-type methods. As illustrated in \eqref{eq:subreg}, this involves the introduction of a regularization parameter $\zeta_k$ and the modification of the Newton update to $x_{k+1} = x_k - (\nabla^2 f(x_k) + \zeta_k I )^{-1} \nabla f(x_k)$. Classical results demonstrate that regularized Newton approaches preserve quadratic convergence under the condition $ \zeta_k \le C\|\nabla f(x_k)\| $, $C > 0$, \cite{li2001modified}. In the seminal work \cite{nesterov2006cubic}, Nesterov and Polyak  introduce the cubic regularization method, where $ \zeta_k = C\|x_{k+1} - x_k\|$, and establish global worst-case complexity guarantees. Recently, Mishchenko \cite{mishchenko2023regularized} proposes a regularization scheme that combines elements of cubic regularization with the Levenberg-Marquardt (LM) method \cite{marquardt1963algorithm}. In this approach, $\zeta_k$ is chosen as $\zeta_k = \sqrt{C \| \nabla f(x_k) \|}$, achieving the fast global rate of cubic regularization methods while requiring only a single matrix inversion per iteration. Truncation and regularization techniques are often integrated to enhance both theoretical guarantees and practical efficiency, see, e.g., \cite{fasano2006truncated,li2009truncated}.

Saddle point avoidance results for zero- and first-order methods typically rely on the stable manifold theorem and ensure that these approaches can almost always avoid strict saddle points, asymptotically, with proper stepsizes, \cite{lee2019first,panageas2019first,schaeffer2020extending,o2019behavior,vlatakis2019efficiently}. For second-order methods, escape mechanisms commonly use directions $v$ that indicate non-positive curvature of the Hessian $B=\nabla^2 f(x)$, i.e., $\iprod{v}{Bv} \le 0$. Such directions can help avoid saddle points theoretically and improve the numerical performance,  \cite{goldfarb1980curvilinear,gould2000exploiting,more1979use,goldfarb2017using,curtis2019exploiting}. Detecting non-positive curvature is fairly straightforward, as most inner solvers already calculate the terms $\iprod{v}{Bv}$ as a byproduct (for some $v\in\bR^n$). In practice, the direction $v$ is usually chosen as the eigenvector related to the leftmost eigenvalue of $B$, i.e., $\iprod{v}{Bv} \le \lambda_{\min}(B)\|v\|^2 <0$, \cite{gould2000exploiting,curtis2019exploiting}, or through eigenpair-related approaches, \cite{paternain2019newton}. Eigenpair-related directions can be costly to compute, e.g., by applying the Lanczos process which requires the storage of additional matrices or to rerun certain recurrences, \cite{conn2000trust,gould2000exploiting}. By contrast, in a non-asymptotic setting, Royer et al. \cite{royer2020newton} introduce the damped Newton-CG method, which employs capped-CG and a Minimum Eigenvalue Oracle (MEO) with randomly generated vectors when the norm of the gradient is sufficiently small. By doing so, NPC directions can be constructed in an eigenpair-free manner and probabilistic results are established for avoiding saddles. A similar MEO-based approach for trust region methods is presented in \cite{curtis2021trust}. However, ensuring that $v$ serves as a valid descent direction and provides sufficient decrease requires additional care in the algorithm design. Notably, the recent work \cite{liu2022minres} shows that the residual vector $r$ returned by $\MR$ can ensure such sufficient decrease naturally; this indicates that $\MR$ may be a more compatible tool for leveraging non-positive curvature in nonconvex optimization problems. While Liu and Roosta provide non-asymptotic second-order complexity guarantees for Newton-MR in \cite{liu2022newton}, to our knowledge, asymptotic results ensuring strict saddle point avoidance for $\MR$-based methods still seem to be unavailable. We note that probabilistic techniques to avoid saddle points can be employed irrespective of the method's order; see \cite{ge2015escaping,du2017gradient,jin2017escape,jin2018accelerated,jin2021nonconvex} for first-order and \cite{anandkumar2016efficient} for third-order variants. However, such ideas appear to be unsuitable in the local asymptotic analysis of second-order methods. 

\begin{table}[t]
\centering
\setlength{\tabcolsep}{5pt}
\NiceMatrixOptions{cell-space-limits=1.1pt}
{\footnotesize
\begin{NiceTabular}{|p{2.2cm}p{0.9cm}|p{1.8cm}p{0.9cm}p{0.9cm}p{1.2cm}p{1.4cm}|}%
 [ 
   code-before = 
    \rectanglecolor{lavender!30}{3-3}{15-3}
    \rectanglecolor{lavender!30}{3-5}{15-5}
    \rectanglecolor{lavender!30}{3-7}{15-7}
 ]
\toprule
\Block[c]{2-1}{Method} & \Block[c]{2-1}{Ref.} & \Block[c]{2-1}{Type of Analysis} & \Block[c]{2-1}{Saddle Avoid.} & \Block[c]{2-1}{EP-Free} & \Block{1-2}{Non-isolated Minima} & \\[-0.5ex] \cmidrule(lr){6-7} \\[-3.75ex]
 & & & & & \Block{1-1}{Conv.} & \Block{1-1}{Rate $\omega$}  \\ \Hline
\Block{1-1}{{TR$+$NC ${}^{\textcolor{bluep}{{(a)}}}$}} & \Block{1-1}{\cite{conn2000trust}} & \Block{1-1}{Asymptotic} & \Block{1-1}{\cmark} & \Block{1-1}{\xmark} & \Block{1-1}{\xmark}  & \Block{1-1}{--} \\
\Block{1-1}{TR-Newton-CG} & \Block{1-1}{\cite{curtis2021trust}} & \Block{1-1}{Non-Asymp.} & \Block{1-1}{\cmark} & \Block{1-1}{\cmark} & \Block{1-1}{\xmark} & \Block{1-1}{--} \\ 
\Block{1-1}{TR-Newton-CG} & \Block{1-1}{\cite{rebjock2024fast2}} & \Block{1-1}{Asymptotic} & \Block{1-1}{(\cmark)} & \Block{1-1}{\cmark} & \Block{1-1}{\cmark ${}^{\textcolor{bluep}{(b)}}$} & \Block{1-1}{$<1$ $(\text{\xmark})$} \\ \Hline[tikz=densely dotted]
\Block{1-1}{Newton$+$NC ${}^{\textcolor{bluep}{{(a)}}}$} & \Block{1-1}{\cite{gould2000exploiting,curtis2019exploiting}} & \Block{1-1}{(Non-)Asymp.} & \Block{1-1}{\cmark} & \Block{1-1}{\xmark} & \Block{1-1}{\xmark}  & \Block{1-1}{--} \\
\Block{1-1}{NCN} & \Block{1-1}{\cite{paternain2019newton}} & \Block{1-1}{Non-Asymp.} & \Block{1-1}{\cmark} & \Block{1-1}{\xmark} & \Block{1-1}{\xmark} & \Block{1-1}{--} \\
\Block{1-1}{Newton-CG} & \Block{1-1}{\cite{royer2020newton}} & \Block{1-1}{Non-Asymp.} & \Block{1-1}{\cmark} & \Block{1-1}{\cmark} & \Block{1-1}{\xmark} & \Block{1-1}{--} \\
\Block{1-1}{{Newton-MR}} & \Block{1-1}{\cite{liu2022newton}} & \Block{1-1}{Non-Asymp.} & \Block{1-1}{\cmark} & \Block{1-1}{\cmark} & \Block{1-1}{\xmark} & \Block{1-1}{--} \\ \Hline[tikz=densely dotted]
\Block{1-1}{{CR-Newton}} & \Block{1-1}{\cite{nesterov2006cubic}} & \Block{1-1}{(Non-)Asymp.} & \Block{1-1}{\cmark} & \Block{1-1}{-- ${}^{\textcolor{bluep}{(c)}}$} & \Block{1-1}{\cmark} & \Block{1-1}{$=\frac13$ $(= \frac13)$} \\
\Block{1-1}{{CR-Newton}} & \Block{1-1}{\cite{yue2019quadratic}} & \Block{1-1}{Asymptotic} & \Block{1-1}{\cmark} & \Block{1-1}{-- ${}^{\textcolor{bluep}{(c)}}$} & \Block{1-1}{\cmark} & \Block{1-1}{$=1$ $(=1)$} \\
\Block{1-1}{{ARC}} & \Block{1-1}{\cite{rebjock2024fast1}} & \Block{1-1}{Asymptotic} & \Block{1-1}{(\cmark)} & \Block{1-1}{-- ${}^{\textcolor{bluep}{(c)}}$} & \Block{1-1}{\cmark ${}^{\textcolor{bluep}{(b)}}$} & \Block{1-1}{$=1$ (\xmark)} \\ \Hline[tikz=densely dotted] \\[-2ex]
\Block[c]{1-2}{this work} & & \Block[c]{1-1}{Asymptotic} & \Block[c]{1-1}{\cmark} & \Block[c]{1-1}{\cmark} & \Block[c]{1-1}{\cmark} & \Block[c]{1-1}{$<1$ $(\leq \frac13)$} \\
\bottomrule 
\end{NiceTabular}
}
\caption{A summary of classical and recent works on second-order methods that can avoid saddles and/or converge to non-isolated minima that satisfy the (local) {\L}ojasiewicz inequality with exponent $\theta = \frac12$. Here, ``EP-Free'' denotes approaches that do not require (implicit) computations of eigenpairs. The parameter $\omega \in (0,1]$ (in the last column) characterizes the overall rate of superlinear convergence of $\{\|\nabla f(x_k)\|\}$, i.e., $\|\nabla f(x_{k+1})\| = \mathcal O(\|\nabla f(x_k)\|^{1+\omega})$, $k\to\infty$. The rate shown in parenthesis is the best rate under which each respective method is ensured to avoid (strict) saddle points. 
\endgraf
\setlength{\parindent}{1ex} \setlength{\baselineskip}{10pt}
${}^{\textcolor{bluep}{(a)}}$ {\footnotesize The algorithms in \cite{conn2000trust,gould2000exploiting,curtis2019exploiting} are (trust-region) Newton-type approaches that exploit negative curvature (NC) through eigenvalue-based tests and generation of proper directions.} \\
\hspace*{1pt} ${}^{\textcolor{bluep}{(b)}}$ {\footnotesize The results in \cite{rebjock2024fast1,rebjock2024fast2} require $x_k \to \bar x$ where $\bar x$ is a (non-isolated) local minimum.} \\
\hspace*{1pt} ${}^{\textcolor{bluep}{(c)}}$ {\footnotesize In \cite{nesterov2006cubic,yue2019quadratic}, the cubically regularized subproblems are assumed to be solved exactly; in \cite{rebjock2024fast1}, inexact solutions to such models are considered.} 
\vspace{-0.1ex}
\endgraf
}\label{table:sum}
\end{table}

Non-isolated minima frequently arise in overparameterized optimization or problems with continuous symmetries \cite{luo1993error,nesterov2006cubic,liu2022loss}. The cubic regularization method \cite{nesterov2006cubic} and its adaptive variant (ARC) \cite{cartis2011adaptive1,cartis2011adaptive2} can achieve local superlinear convergence under generalized conditions that do not require positive definiteness of the Hessian---see works like \cite{nesterov2006cubic,rebjock2024fast1} using the Polyak-{\L}ojasiewicz (PL) condition, \cite{zhou2018convergence} using the Kurdyka-{\L}ojasiewicz (KL) property and \cite{yue2019quadratic} using an error bound (EB) condition. Recently, in \cite{rebjock2024fast2}, Rebjock and Boumal establish, for the first time, superlinear convergence of the trust region method with CG as the inner solver under the (local) PL condition. Specifically, they provide a novel discussion of the performance of CG around non-isolated minima. As noted in \cite{rebjock2024fast2}, similar local convergence guarantees for $\MR$-based methods are not available. Moreover, the results in \cite{rebjock2024fast2} are local in nature and it is assumed that the iterates eventually enter a sufficiently small neighborhood of a minimum. In this context, it is interesting to ask whether comprehensive global-to-local convergence properties can be derived, ensuring that iterates can reach such local neighborhoods. \\[2mm]
\noindent\textit{Contributions.} Our idea is to apply MINRES to solve \eqref{sub: LS} and to leverage non-positive curvature information following the general paradigm of truncated methods. In summary, our contributions include:
\begin{itemize}
    \item We propose a MINRES-based, Newton-type algorithm that integrates (forward and backward) line-search globalization, built-in detection of non-positive curvature, and adaptive regularization. Under standard assumptions, we show that accumulation points of the generated sequence of iterates $\{x_k\}$ correspond to stationary points of the problem. Moreover, stronger convergence properties, including iterate convergence $x_k \to \bar x$, are established under the well-known Kurdyka-{\L}ojasiewicz (KL) inequality. Our analysis recovers classical results for truncated Newton methods, \cite{dembo1983truncated,dahito2019conjugate}, while relying on a more modern and weaker set of assumptions. To our knowledge, this is among the first works to provide in-depth asymptotic convergence guarantees for MINRES-based truncated algorithms.

    \item When the matrices $B_k$ in \eqref{sub: LS} are chosen as exact Hessians, we show that our algorithmic framework can avoid strict saddle points without additional costs or any eigenvalue-related computations. We further analyze the local convergence behavior of the proposed approach near non-isolated minima under the {\L}ojasiewicz property and establish superlinear convergence of $\{\|\nabla f(x_k)\|\}$ with a rate that is arbitrarily close to quadratic. These desirable guarantees are partly made possible by our algorithmic mechanisms to adaptively control and choose the tolerance, curvature test, and regularization parameters. 
    \item Finally, we illustrate the efficiency of the proposed MINRES-based algorithm and report its numerical performance on deep auto-encoder problems and the CUTEst test collection.
\end{itemize}

In \Cref{table:sum}, we provide an additional overview and comparison of related work and second-order methodologies. \\[2mm]
\noindent\textit{Notation.}
We use $\bf{0}$ to denote a vector or matrix of zeros. We use $ I_{t} $ to denote the identity matrix of dimension $ t \times t $ and $e_{j}^{(t)}$ is the $j$-th column of $I_{t}$. For $ t\geq 1 $, the Krylov subspace of degree $t$ generated using $ b $ and $A$ is defined as $\mathcal{K}_{t}(A, b) = \mathrm{span}\{ b,Ab,\ldots,A^{t-1}b\}$.  We use the subscripts $k$ to indicate the iterations of the main algorithm. The superscript $t$ is reserved to denote the iterations of a subproblem solver to obtain $d_k$, e.g., $d_k=p_{k}^{(t)}$ or $d_k=r_{k}^{(t-1)}$. We often use the abbreviations $d_k=p_t$, $d_k=r_{t-1}$, or $d_t = d_k^{(t)}$ (when the correspondence $p_t \equiv p_k^{(t)}$, $r_{t-1} \equiv r_k^{(t-1)}$ is clear). The residual vector at the $(t-1)$-th inner iteration is given by $r_{t-1}=b-Ap_{t-1}$. We further set $g_k \equiv \nabla f(x_k)\in \mathbb{R}^{n}$ and $B_k \approx \nabla^2 f(x_k) \in \mathbb{R}^{n \times n}$. For a symmetric matrix $A \in \mathbb S_n \subset \bR^{n \times n}$, we write $A \succeq (\succ)\;\! 0$ $A$ is positive semi-definite (positive definite). The Euclidean norm (for vectors) and the spectral norm (for matrices) are denoted by $\| \cdot\|$. \\[2mm]
\noindent\textit{Organization.} In \Cref{sec: MR}, we review MINRES and its main properties. In \Cref{sec: LS}, we present a MINRES-based linesearch algorithm and conduct a detailed convergence analysis. In \Cref{sec: Num}, we demonstrate the favorable numerical performance of the proposed algorithm. 

\section{Review of $\MR$} \label{sec: MR}
In this section, we briefly review $\MR$ and list several useful properties that will be relevant in our analysis. The interested reader is referred to \cite{liu2022minres} for further details. We consider a general symmetric (potentially indefinite or singular) matrix $A \in \mathbb S_n \subset \mathbb{R}^{n \times n}$ and a vector $b \in \mathbb{R}^{n}$. Moreover, for simplicity, we set $p_0=\mathbf{0}$, which implies $\mathcal{K}_{t}(A, r_0)=\mathcal{K}_{t}(A, b)$. 

\subsection{$\MR$: Review and Basic Components} \label{ssec: MR1} 
We first provide a definition of non-positive curvature ($\NPC$) directions. 

\begin{defn}\label{def:NPC}
  A vector $v\in \mathbb{R}^{n}\backslash \{\bf{0}\}$ satisfying $v^{\top}Av\le 0$ is called a non-positive curvature direction (for $A$). 
\end{defn}

The interplay between $A$ and $b$ is crucial to the convergence of $\MR$. This connection is entirely described by the grade of $b$ $\wrt$ $A$, see \cite{saad2003iterative}.

\begin{defn}\label{def:g}
The grade of $b$ $\wrt$ $A$ is the positive integer $g\in \mathbb{N}$ satisfying
\begin{equation*}
    \operatorname{dim}(\mathcal{K}_{t}(A, b)) = t \;\; \text{if} \;\; t \leq g \quad \text{and} \quad \operatorname{dim}(\mathcal{K}_{t}(A, b)) = g \;\; \text{if} \;\; t > g. 
\end{equation*}
\end{defn}
 
$\MR$ solves the following least-squares formulation of the problem \eqref{sub: LS}:
\begin{equation}\label{eq:MR1}
   p_t = {\argmin}_{p \in \mathcal{K}_{t}(A, b)}~\|b - A p\|.
\end{equation}
$\MR$ consists of three major steps, as shown in \Cref{alg: MINRES_LS}:
the Lanczos process, a QR decomposition, and the iterate update. \\[1mm] 
%
\noindent\textbf{Lanczos process.} Setting $v_1=b/\|b\|$, after $t$ iterations, the Lanczos vectors form an orthogonal matrix $ V_{t+1} = \begin{bmatrix} v_1  & v_{2} & \dots &  v_{t+1} \end{bmatrix} \in \mathbb{R}^{n \times (t+1)} $, whose columns span $\mathcal{K}_{t+1}(A, b)$ and satisfy  
\begin{equation}\label{eq: AV1}
    A V_{t} = V_{t+1} \tilde{T}_{t}=V_{t+1} \begin{bmatrix}
    T_{t} \\
    \beta_{t+1}(e_{t}^{(t)})^{\top}
\end{bmatrix}=V_{t+1} \begin{bmatrix} V_{t}^{\top} A V_{t} \\ \beta_{t+1}(e_{t}^{(t)})^{\top} \end{bmatrix},
\end{equation}
where $ \tilde{T}_{t} \in \bR^{(t+1) \times t}$ is an upper Hessenberg matrix and $\beta_{t+1} \in  \bR$ is a scalar. Let $p_t = V_t q_t$ be given for some $q_t \in\mathbb{R}^{t}$. We can then rewrite \eqref{eq:MR1} as
\begin{equation}\label{eq:MR2}
    {\min}_{q_{t} \in \mathbb{R}^{t}}~\|\beta_{1} e_1^{(t+1)} - \tilde{T}_{t} q_{t}\|, \quad \beta_{1} = \|b\|.
\end{equation}
\noindent\textbf{QR decomposition.} The least-squares problem \eqref{eq:MR2} is solved via QR decomposition of $\tilde{T}_{t}$. It holds that
\begin{equation*}
    Q_{t} \tilde{T}_{t} = \tilde{R}_{t} \equiv 
    \begin{bmatrix}
        R_{t}\\
        \zero^{\top}
    \end{bmatrix},  \quad   \min_{q_{t} \in \mathbb{R}^{t}} \|\beta_{1} e_1^{(t+1)} - \tilde{T}_{t} q_{t}\| = \min_{q_{t} \in \bR^{t}} \vnorm{\begin{bmatrix}
        t_{t}\\
        \phi_{t}
    \end{bmatrix} - 
    \begin{bmatrix}
        R_{t} \\
        \zero^{\top}
    \end{bmatrix}q_{t}},
\end{equation*}
where $ Q_{t} \in \bR^{(t+1) \times (t+1)} $ is orthogonal, 
$R_{t} \in \bR^{t \times t}$ is upper-triangular, and we have $Q_{t}\beta_1 e_1^{(t+1)} = [t_t; \phi_{t}]^{\top}$, where $t_t\in \bR^{t}$, $\phi_{t} \in \bR$. Hence, $q_t$ satisfies $R_t q_t =t_t$, which yields $\phi_t=\|r_t\|$. We also trivially have $\phi_0=\beta_1=\|b\|$. \\[1mm]
\noindent\textbf{Iterate updates.} Suppose $ t < g $ and let $ D_{t} $ be given by the lower-triangular system $ R^{\top}_{t} D^{\top}_{t} = V^{\top}_{t} $. Now, setting $ V_{t} = [ V_{t-1} \mid v_{t}] $ and using the fact that $ R_{t} $ is upper-triangular, we obtain the recursion $ D_{t} = [ D_{t-1} \mid d_{t}] $ for some vector $ d_{t}  \in \bR^n$. As a result, using $ R_{t} q_{t} = t_{t} $, we can update the iterate $p_{t-1}$ via
\begin{align*}
	p_{t} = V_{t} q_{t} = D_{t} R_{t} q_{t} = D_{t} t_{t} = \begin{bmatrix}
		D_{t-1} & \mid & d_{t}
	\end{bmatrix} \begin{bmatrix}
		t_{t-1} \\
		\tau_{t}
	\end{bmatrix} = p_{t-1} + \tau_{t} d_{t}.
\end{align*}
Here, $\tau_t$ is a scalar that can be computed from previous iterations. By $ R^{\top}_{t} D^{\top}_{t} = V^{\top}_{t}$, the direction $d_t$ can be updated recursively using $\{v_i\}_{i=1}^{t-1}$ and $\{d_i\}_{i=1}^{t-1}$. 

\subsection{Basic Properties of $\MR$}\label{ssec: MR2}
Next, we summarize three core properties of $\MR$: the $\NPC$ detection, a certificate for positive definiteness of $A$, and other useful results. We will omit proofs and refer interested readers to \cite{liu2022minres,saad2011numerical}. Theorem~\ref{thm: NPC} essentially implies that $\NPC$ directions can be detected without additional costs in $\MR$. 

\begin{algorithm}[t]
	\caption{\textbf{$\MR$ with Built-in $\NPC$ Detection for \eqref{sub: LS} }} 
    \label{alg: MINRES_LS}
	\begin{algorithmic}[1]
		\State \textbf{Input:} the matrix $ \AA $, the vector $\bb$, the tolerance parameter $\theta$,
		\State $ \beta_1 = \vnorm{\bb} $, $ \rr_{0} = \bb $, $ \vv_1 = \bb /\beta_1 $, $ \vv_0 = \xx_0 = \dd_0 = \dd_{-1} = \zero $, $ c_0 = -1 $, $ s_0 = \delta^{(1)}_1= \epsilon_1= 0 $, $ \phi_0  = \beta_1 $, $  t= 1 $;
		\While {true}
		\State $ \pp_{t} = \AA \vv_{t}$, $\alpha_{t} = \vv_t^{\top}\pp_{t} $, $ \pp_{t} = \pp_{t} - \beta_{t} \vv_{t-1} $, $ \pp_{t} = \pp_{t} - \alpha_{t} \vv_{t} $, $ \beta_{t+1} = \vnorm{\pp_{t}} $,
		\State $\left[\begin{array}{ll}\delta_t^{(2)} & \epsilon_{t+1} \\ \gamma_t^{(1)} & \delta_{t+1}^{(1)}\end{array}\right]=\left[\begin{array}{cc}c_{t-1} & s_{t-1} \\ s_{t-1} & -c_{t-1}\end{array}\right]\left[\begin{array}{cc}\delta_t^{(1)} & 0 \\ \alpha_t & \beta_{t+1}\end{array}\right]$;     
        \If{$ c_{t-1} \gamma^{(1)}_{t} \geq 0$} 
            \State  flag\,$=$\,$\NPC$, \Return $d =  \beta_1\frac{\rr_{t-1}}{\|\rr_{t-1}\|}$ \text{as an $\NPC$ direction}; 
            \EndIf
		\State $ \gamma_{t}^{(2)} = \sqrt{(\gamma_{t}^{(1)})^2 + \beta_{t+1}^2} $;
		\State $c_{t} = \gamma_{t}^{(1)} / \gamma_{t}^{(2)}$\footnotemark{}, $ s_{t} = \beta_{t+1} / \gamma_{t}^{(2)} $, $ \tau_{t} = c_{t} \phi_{t-1} $, $ \phi_{t} = s_{t} \phi_{t-1} $;
		\State $ \dd_{t} = (\vv_{t} - \delta^{(2)}_{t} \dd_{t-1} - \epsilon_{t} \dd_{t-2} ) / \gamma^{(2)}_{t} $, $ \xx_{t} = \xx_{t-1} + \tau_{t} \dd_{t} $;
        \If{$ \phi_{t} \le \theta \beta_1 $}
        \State  flag\,$=$\,SOL, \Return $ d= \xx_{t} $ as a solution to \eqref{sub: LS};
        \EndIf   
		\State $ \vv_{t+1} = \pp_{t} / \beta_{t+1} $\footnotemark{}, $ \rr_{t} = s_{t}^2 \rr_{t-1} - \phi_{t} c_{t} \vv_{t+1} $;
		\State $ t \leftarrow t+1 $;
		\EndWhile
		\State \textbf{Output:}  $\NPC$ or SOL direction $d$, $r_t$, flag;
    	\end{algorithmic}
\end{algorithm} 

\footnotetext[1]{Here, we have $\gamma_{t}^{(2)} \neq 0$. Note that $\gamma_{t}^{(2)} = 0$ implies $\gamma_{t}^{(1)} = 0$ by Line 9. Hence, $\NPC$ will be detected and the algorithm terminates at Line 7.}
\footnotetext[2]{Here, we have $\beta_{t+1} \neq 0$. Note that if $\beta_{t+1} = 0$, then $s_t=0$, and $\phi_t=0$ at Line 10. Hence, the algorithm terminates earlier at Line 13.}

\begin{thm}[$\NPC$ detection] 
    \label{thm: NPC}
    Let $ g $ be the grade of $b $ w.r.t. $ A $. Assume that the residual vectors $\{r_t\}$ are generated by $\MR$. It holds that 
    \begin{subequations}		
		\begin{align}
			r_{t-1}^{\top}A r_{t-1} &= -\phi_{t-1}^2 c_{t-1} \gamma^{(1)}_{t},   \quad\;\!\, 1 \leq t \leq g, \label{eq:rAr} \\		
			r_t^{\top} b &= \vnorm{r_{t}}^2,  \quad\quad\quad\quad\quad 1 \leq t \leq g. \label{eq:bTrk}
		\end{align}
	\end{subequations}
\end{thm}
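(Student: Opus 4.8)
The plan is to exploit the Lanczos/QR structure of $\MR$ reviewed in \Cref{ssec: MR1}. The crucial preliminary step is a closed form for the residual. Writing $p_{t-1} = V_{t-1} q_{t-1}$ and using \eqref{eq: AV1} with index $t-1$, we have $r_{t-1} = b - A V_{t-1} q_{t-1} = V_{t}(\beta_{1} e_{1}^{(t)} - \tilde{T}_{t-1} q_{t-1})$. Multiplying by the orthogonal factor $Q_{t-1}$ of the QR factorization $Q_{t-1}\tilde{T}_{t-1} = \tilde{R}_{t-1} = \left[\begin{smallmatrix} R_{t-1} \\ \zero^{\top}\end{smallmatrix}\right]$, and using $Q_{t-1}\beta_{1} e_{1}^{(t)} = [t_{t-1};\phi_{t-1}]^{\top}$ together with $R_{t-1}q_{t-1} = t_{t-1}$, the first $t-1$ coordinates cancel and we obtain
\begin{equation*}
    r_{t-1} = \phi_{t-1}\, V_{t}\, Q_{t-1}^{\top} e_{t}^{(t)}.
\end{equation*}
Since $V_{t}$ has orthonormal columns for $t \le g$, this also yields $\|r_{t-1}\| = \phi_{t-1}$.

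For \eqref{eq:bTrk}, I would invoke the optimality of the $\MR$ iterate directly. Because $p_{t}$ minimizes $\|b - Ap\|$ over $\mathcal{K}_{t}(A,b)$ and $A$ is symmetric, the first-order optimality condition reads $\iprod{r_{t}}{Aw} = 0$ for all $w \in \mathcal{K}_{t}(A,b)$; in particular, choosing $w = p_{t}$ gives $\iprod{r_{t}}{Ap_{t}} = 0$. Since $b = r_{t} + Ap_{t}$, we conclude $\iprod{r_{t}}{b} = \|r_{t}\|^{2} + \iprod{r_{t}}{Ap_{t}} = \|r_{t}\|^{2}$, which is \eqref{eq:bTrk}.

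For \eqref{eq:rAr}, using $V_{t}^{\top} A V_{t} = T_{t}$ (a consequence of \eqref{eq: AV1} and orthonormality of $V_{t+1}$) and the residual formula above, we get $r_{t-1}^{\top} A r_{t-1} = \phi_{t-1}^{2}\, (e_{t}^{(t)})^{\top} Q_{t-1} T_{t} Q_{t-1}^{\top} e_{t}^{(t)}$, so the task reduces to evaluating this scalar. I would split off the last column, $T_{t} = \big[\,\tilde{T}_{t-1} \ \big| \ \beta_{t} e_{t-1}^{(t)} + \alpha_{t} e_{t}^{(t)}\,\big]$; since $Q_{t-1}\tilde{T}_{t-1} = \tilde{R}_{t-1}$ has a vanishing last row, only the last column survives and
\begin{equation*}
    (e_{t}^{(t)})^{\top} Q_{t-1} T_{t} Q_{t-1}^{\top} e_{t}^{(t)} = \big(\beta_{t}\,(Q_{t-1})_{t,t-1} + \alpha_{t}\,(Q_{t-1})_{t,t}\big)\,(Q_{t-1})_{t,t}.
\end{equation*}
Writing $Q_{t-1} = G_{t-1}\cdots G_{1}$ as a product of the reflection-type Givens matrices of \Cref{alg: MINRES_LS}, where $G_{j}$ acts on rows $j,j+1$ through the block $\left[\begin{smallmatrix} c_{j} & s_{j} \\ s_{j} & -c_{j}\end{smallmatrix}\right]$, one reads off the last row as $(Q_{t-1})_{t,t} = -c_{t-1}$ and $(Q_{t-1})_{t,t-1} = -s_{t-1}c_{t-2}$. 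Substituting and invoking Line 5 of \Cref{alg: MINRES_LS}, namely $\gamma_{t}^{(1)} = s_{t-1}\delta_{t}^{(1)} - c_{t-1}\alpha_{t}$ with $\delta_{t}^{(1)} = -c_{t-2}\beta_{t}$ (the $t \mapsto t-1$ shift of $\delta_{t+1}^{(1)} = -c_{t-1}\beta_{t+1}$ from the same line), the bracket simplifies to $c_{t-1}(s_{t-1}c_{t-2}\beta_{t} + c_{t-1}\alpha_{t}) = -c_{t-1}\gamma_{t}^{(1)}$, whence $r_{t-1}^{\top} A r_{t-1} = -\phi_{t-1}^{2} c_{t-1}\gamma_{t}^{(1)}$, which is \eqref{eq:rAr}.

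The main obstacle is the bookkeeping in this last step: extracting the last row of $Q_{t-1}$ from the Givens product and matching signs with the reflection convention and the index shifts used in \Cref{alg: MINRES_LS}. The boundary cases are best verified directly from the initialization: for $t = 1$ one has $c_{0} = -1$, $s_{0} = \delta_{1}^{(1)} = 0$, so $\gamma_{1}^{(1)} = \alpha_{1}$ and the claimed identity reduces to $r_{0}^{\top} A r_{0} = b^{\top} A b$; for $t = g$ one must account for the possibility $\beta_{g+1} = 0$.
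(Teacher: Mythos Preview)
Your argument is correct. Note, however, that the paper does not supply its own proof of this statement: it explicitly omits the proof and refers the reader to \cite{liu2022minres,saad2011numerical}. So there is no in-paper derivation to compare against; what you have written is a self-contained proof that tracks the standard Lanczos/QR bookkeeping from those references.

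A couple of minor comments. For \eqref{eq:bTrk}, your use of the least-squares optimality condition $r_t \perp A\mathcal{K}_t(A,b)$ is the cleanest route and avoids any QR bookkeeping; this is indeed how the identity is usually obtained. For \eqref{eq:rAr}, your extraction of the last row of $Q_{t-1}$ is correct: writing $e_t^\top Q_{t-1} = e_t^\top G_{t-1}\cdots G_1$ and noting that only $G_{t-1}$ and $G_{t-2}$ touch coordinates $t$ and $t-1$ gives exactly $(Q_{t-1})_{t,t} = -c_{t-1}$ and $(Q_{t-1})_{t,t-1} = -s_{t-1}c_{t-2}$, and the substitution $\delta_t^{(1)} = -c_{t-2}\beta_t$ then yields the claimed identity. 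The boundary case $t=1$ you flag is handled by the initialization $c_0=-1$, $s_0=\delta_1^{(1)}=0$, which forces $Q_0 = 1$ and $\gamma_1^{(1)}=\alpha_1$, so the formula reduces to $r_0^\top A r_0 = \|b\|^2 \alpha_1 = b^\top A b$. The case $t=g$ needs no special treatment in \eqref{eq:rAr}, since the computation only uses $\beta_g,\alpha_g$ and the relation $V_g^\top A V_g = T_g$, all of which remain valid; the quantity $\beta_{g+1}$ does not enter.
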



Here, $\phi_{t-1}$, $c_{t-1}$ and $\gamma^{(1)}_{t}$ are parameters that are computed in $\MR$, cf$.$ \Cref{alg: MINRES_LS}. From \eqref{eq:rAr}, if $\phi_{t-1}^2 c_{t-1} \gamma^{(1)}_{t} \ge 0$, then we have $r_{t-1}^{\top}A r_{t-1} \le 0$ and $r_{t-1}$ can be declared as an $\NPC$ direction. 
In classical truncated methods, such as, e.g., Newton-CG, \cite{dembo1983truncated,royer2020newton}, and Newton-CR, \cite{dahito2019conjugate}, such directions are also not hard to obtain but they are not necessarily sufficient descent directions. By contrast, \eqref{eq:bTrk} states that the residual vector in $\MR$ can be an $\NPC$ direction yielding sufficient decrease for $b=-g_k$.

We note that the matrix $T_{t}  =  V_{t}^{\top} A V_{t} \in \bR^{t \times t}$ in \eqref{eq: AV1} has a strong connection to $A$ and plays an important role in the analysis of $\MR$. 

\begin{thm}[Certificate of positive definiteness] \label{thm: Cer}
    Given $A\in\mathbb{S}_{n}$ and $b\in\mathbb{R}^{n}$ and recalling $g$ and $T_t$ (see Definition \ref{def:g} and \eqref{eq: AV1}), it holds that:
    \begin{enumerate}[label=\textup{(\roman*)},topsep=1pt,itemsep=0ex,partopsep=0ex, leftmargin = 25pt]
        \item We have $T_t \succ0$ as long as $\NPC$ is not detected in $\MR$. 
        \item The $i$-th eigenvalue of $T_t$ is upper bounded by the $i$-th eigenvalue of $A$.
        \item $\MR$ returns a solution to \eqref{eq:MR1} after exactly $ g $ iterations (in exact arithmetic).
    \end{enumerate} 
\end{thm}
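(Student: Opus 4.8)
The three items are essentially independent, and I would treat them in turn. For (i), the plan is to reduce positive-definiteness of the Lanczos matrix $T_t = V_t^{\top} A V_t$ to the signs of the scalars $c_{s-1}\gamma_s^{(1)}$ via Sylvester's law of inertia, after first showing that the $\MR$ residuals are mutually $A$-conjugate. For (ii), I would appeal to the Courant--Fischer minimax characterization of eigenvalues. For (iii), I would use that the Krylov subspaces stabilize at the grade $g$.

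\emph{Part (i).} I would start from the least-squares optimality (normal equations) of \eqref{eq:MR1}, which gives $r_t \perp A\mathcal{K}_t(A,b)$ for every inner iteration. Since $r_i = b - A p_i \in \mathcal{K}_{i+1}(A,b) \subseteq \mathcal{K}_j(A,b)$ whenever $i < j$, and $A$ is symmetric, this yields $r_i^{\top} A r_j = 0$ for all $i \ne j$, i.e., $\{r_0,\dots,r_{t-1}\}$ is an $A$-conjugate family. Now suppose $\NPC$ has not been detected in the first $t$ iterations (necessarily $t \le g$). Then $\gamma_s^{(1)} \ne 0$, hence $c_s = \gamma_s^{(1)}/\gamma_s^{(2)} \ne 0$, for $s \le t$; moreover $\phi_s = \|r_s\| \ne 0$ for $s < g$, since $r_s = 0$ would mean $b = A p_s$ with $p_s = q(A)b$, $\deg q \le s-1$, forcing the nonzero polynomial $1 - \lambda q(\lambda)$ of degree $\le s$ to annihilate $b$, i.e., $g \le s$. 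The residual recursion $r_t = s_t^2 r_{t-1} - \phi_t c_t v_{t+1}$ (Line~15 of \Cref{alg: MINRES_LS}) then shows that each $r_s$ has a nonzero component along $v_{s+1}$, so $\{r_0,\dots,r_{t-1}\}$ is a triangular, hence nonsingular, change of basis for $\mathcal{K}_t(A,b) = \range(V_t)$. Writing $\widehat{R}_t := [\, r_0\ \cdots\ r_{t-1}\,] = V_t C_t$ with $C_t$ upper-triangular and invertible, we obtain $C_t^{\top} T_t C_t = \widehat{R}_t^{\top} A \widehat{R}_t = \operatorname{diag}(r_0^{\top} A r_0,\dots,r_{t-1}^{\top} A r_{t-1})$. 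By Sylvester's law of inertia, $T_t \succ 0$ iff $r_i^{\top} A r_i > 0$ for every $i$; and by \eqref{eq:rAr}, $r_i^{\top} A r_i = -\phi_i^2 c_i \gamma_{i+1}^{(1)}$, which is positive precisely when $c_i \gamma_{i+1}^{(1)} < 0$, i.e., precisely when the test in Line~6 does not trigger at iteration $i+1$. This gives (i).

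\emph{Parts (ii) and (iii).} For (ii), order the eigenvalues decreasingly. Since $V_t$ has orthonormal columns, $y \mapsto V_t y$ is a norm-preserving bijection of $\bR^t$ onto $\range(V_t)$ with $y^{\top} T_t y = (V_t y)^{\top} A (V_t y)$; hence by Courant--Fischer the $i$-th eigenvalue $\lambda_i(T_t) = \max_{\dim S = i}\min_{0\ne y\in S} y^{\top} T_t y/\|y\|^2$ equals $\max\{\min_{0\ne x\in S'} x^{\top} A x/\|x\|^2 : S' \subseteq \range(V_t),\ \dim S' = i\}$, which is at most the same maximum over all $i$-dimensional subspaces of $\bR^n$, namely $\lambda_i(A)$. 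For (iii), by Definition~\ref{def:g} the subspace $\mathcal{K}_g(A,b)$ is $A$-invariant and contains $b$; decomposing $\bR^n = \mathcal{K}_g(A,b) \oplus \mathcal{K}_g(A,b)^{\perp}$ into two $A$-invariant pieces and writing $p = p_1 + p_2$ accordingly gives the orthogonal split $\|b - Ap\|^2 = \|b - A p_1\|^2 + \|A p_2\|^2 \ge \|b - A p_1\|^2$, so the unconstrained least-squares minimum is already attained over $\mathcal{K}_g(A,b)$; hence the MINRES iterate $p_g$ solves \eqref{eq:MR1} and is a global least-squares solution. Since $\dim\mathcal{K}_t(A,b)$ strictly increases for $t < g$, this cannot occur earlier, which gives the ``exactly $g$'' count; operationally, $\beta_{g+1} = 0$ at iteration $g$ forces termination of \Cref{alg: MINRES_LS} (at Line~7 if $\gamma_g^{(2)} = 0$, otherwise at Line~13 since then $s_g = 0$ and $\phi_g = 0$).

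\emph{Main obstacle.} The delicate part is (i): converting the purely algorithmic $\NPC$ test, phrased through the Givens/QR scalars $c_{t-1},\gamma_t^{(1)}$, into the spectral statement $T_t \succ 0$. The congruence/inertia route above keeps this short, but it hinges on two ingredients that need care — the $A$-conjugacy of the $\MR$ residuals (to be extracted from the normal equations of \eqref{eq:MR1} together with the nesting of the Krylov subspaces) and the identity \eqref{eq:rAr} — and one must also verify that the degenerate situations (an early zero residual, or $\beta_{t+1} = 0$) coincide with the algorithm terminating, so that the residual basis used above is well defined. Parts (ii)--(iii) are then fairly standard (minimax and the grade of a Krylov subspace).
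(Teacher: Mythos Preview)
The paper does not give its own proof of this theorem: immediately before stating it, the authors write ``We will omit proofs and refer interested readers to \cite{liu2022minres,saad2011numerical}.'' So there is nothing in the paper to compare against directly; the relevant question is whether your argument stands on its own, and it does.

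Parts (ii) and (iii) are the standard Courant--Fischer (Cauchy interlacing) argument and the textbook Krylov grade argument, exactly as one finds in Saad's book. Your treatment of (i) is correct and rather elegant: the $A$-conjugacy of the $\MR$ residuals (extracted from the normal equations of \eqref{eq:MR1} plus the nesting $r_i\in\mathcal{K}_{i+1}\subseteq\mathcal{K}_j$ for $i<j$) together with the triangular change of basis $[r_0,\dots,r_{t-1}]=V_tC_t$ reduces the inertia of $T_t$, via Sylvester's law, to the signs of the individual $r_i^\top Ar_i$, which by \eqref{eq:rAr} is precisely the algorithmic $\NPC$ test. This route is somewhat different in flavor from the arguments in \cite{liu2022minres}, which work more directly with the tridiagonal/Givens structure of $T_t$; your congruence argument trades that bookkeeping for the residual-conjugacy observation and is arguably cleaner. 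One minor point in (iii): the clause ``since $\dim\mathcal{K}_t(A,b)$ strictly increases for $t<g$, this cannot occur earlier'' is not by itself the reason (strict growth of the feasible set does not preclude early attainment of the minimum); the actual justification is your polynomial argument that $r_t\neq 0$ for $t<g$ (so, with $\theta=0$, Line~13 cannot trigger early), combined with your $\beta_{g+1}=0$ termination observation. With that clarification the argument is complete.
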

 
The matrix $T_t$ is known to converge to $A$ if exact arithmetic is used; see \cite[Section 6.6.2]{saad2011numerical}. Part (iii) of Theorem~\ref{thm: Cer} essentially follows from the definition of $g$ demonstrating the efficiency of $\MR$ when applied to low-grade systems. 

As long as $\NPC$ has not been detected, the iterates $\{p_t\}$ enjoy the following property, which will be frequently used in the convergence analysis of our framework; cf$.$ \cite[Theorems 3.8 and 3.11]{liu2022minres}.

\begin{thm}[Properties of $\MR$]\label[theorem]{lemmaMR}
    Let $ g $ be the grade of $b $ w.r.t. $ A $. As long as $\NPC$ has not been detected, the iterates $\{p_t\}$, $1 \leq t < g $, satisfy:
    \begin{enumerate}[label=\textup{(\roman*)},topsep=1pt,itemsep=0ex,partopsep=0ex, leftmargin = 25pt]
        \item $p_t^{\top} b >  p_t^{\top} A  p_t.$
        \item $p_1 = \frac{b^{\top}Ab}{\|{A b}\|^2} b$ and $ p_t^{\top}b\ge p_1^{\top}b = \frac{b^{\top}Ab}{\|{A b}\|^2} \|b\|^2$. 
    \end{enumerate} 
 \end{thm}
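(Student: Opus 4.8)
The plan is to treat the two parts separately, reducing each to a scalar positivity (for (i)) or monotonicity (for (ii)) statement about the Krylov polynomial underlying $p_t$. As a preliminary, note that $\mathcal{K}_1(A,b)=\Span\{b\}$, so $p_1=\sigma b$ for some $\sigma\in\bR$, and \eqref{eq:MR1} reduces to minimizing $\|b-\sigma Ab\|^2=\|b\|^2-2\sigma\,b^\top Ab+\sigma^2\|Ab\|^2$ over $\sigma$, giving $\sigma=b^\top Ab/\|Ab\|^2$; here $Ab\ne 0$ (else $b^\top Ab=0$ and $\NPC$ is flagged at $t=1$) and $\sigma>0$ (since $\NPC$ not detected forces $\alpha_1=b^\top Ab/\|b\|^2>0$). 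Then $p_1^\top b=\sigma\|b\|^2$, which is the claimed formula in (ii).

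For part (i), write $p_t=\pi_t(A)b$ with $\deg\pi_t=t-1$ (the degree is exactly $t-1$ for $1\le t<g$: $\tau_t=c_t\phi_{t-1}\ne0$ because $\NPC$ not detected gives $c_t\ne0$ and $\phi_{t-1}>0$, while $d_t$ carries the nonzero $v_t$-component $1/\gamma_t^{(2)}$, so $p_t\notin\mathcal{K}_{t-1}$). Since $b\notin A\mathcal{K}_{t-1}$ while $\mathcal{K}_t=\Span\{b\}\oplus A\mathcal{K}_{t-1}$, we may split $p_t=\pi_t(0)\,b+Au$ with $u\in\mathcal{K}_{t-1}$. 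The least-squares optimality in \eqref{eq:MR1} makes $r_t=b-Ap_t$ orthogonal to $A\mathcal{K}_t\supseteq A\mathcal{K}_{t-1}\ni Au$, and \eqref{eq:bTrk} gives $b^\top r_t=\|r_t\|^2$, so
\[
p_t^\top b-p_t^\top Ap_t \;=\; p_t^\top r_t \;=\; \pi_t(0)\,(b^\top r_t)+(Au)^\top r_t \;=\; \pi_t(0)\,\|r_t\|^2 .
\]
As $\|r_t\|=\phi_t>0$ for $t<g$, it remains to show $\pi_t(0)>0$. Let $\psi_t(\lambda)=1-\lambda\pi_t(\lambda)$, so $r_t=\psi_t(A)b$, $\deg\psi_t=t$, and $\pi_t(0)=-\psi_t'(0)$. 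If $\psi_t(\mu)=0$, factor $\psi_t(\lambda)=(1-\lambda/\mu)\chi_t(\lambda)$ and set $y=\chi_t(A)b\in\mathcal{K}_t\setminus\{0\}$; then $Ay-\mu y=-\mu r_t\perp A\mathcal{K}_t$, and writing $y=V_tz$ together with $(AV_t)^\top AV_t=T_t^2+\beta_{t+1}^2 e_t^{(t)}(e_t^{(t)})^\top$ and $(AV_t)^\top V_t=T_t$ gives $\bigl(T_t^2+\beta_{t+1}^2 e_t^{(t)}(e_t^{(t)})^\top\bigr)z=\mu T_tz$. Both matrices in this pencil are symmetric positive definite whenever $\NPC$ is not detected (Theorem~\ref{thm: Cer}(i) gives $T_t\succ0$), so every root $\mu$ of $\psi_t$ is real and positive; hence $\psi_t(\lambda)=\prod_j(1-\lambda/\mu_j)$ with $\mu_j>0$, so $\psi_t'(0)=-\sum_j 1/\mu_j<0$ and $\pi_t(0)>0$. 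This proves (i), and in particular $p_t^\top b>p_t^\top Ap_t\ge 0$.

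For the remaining inequality $p_t^\top b\ge p_1^\top b$ in (ii), I would establish the stronger fact that $t\mapsto p_t^\top b$ is non-decreasing. The ingredients are: (a) the identity $p_t=c_t^2\,p_t^{\mathrm{cg}}+s_t^2\,p_{t-1}$ relating the $\MR$ iterate $p_t$ to the CG iterate $p_t^{\mathrm{cg}}$ through the same Givens parameters, which follows from the shared QR recursion and is valid here since $T_j\succ0$ for $j\le t$ (so CG does not break down); and (b) monotonicity of $p_t^{\mathrm{cg}\top}b$: every nonzero $x=V_tz\in\mathcal{K}_t$ has $x^\top Ax=z^\top T_tz>0$, so $\phi(x):=\tfrac12 x^\top Ax-x^\top b$ is a positive-definite quadratic on $\mathcal{K}_t$, CG minimizes it over the nested spaces $\mathcal{K}_1\subseteq\cdots\subseteq\mathcal{K}_t$, and $\phi(p_j^{\mathrm{cg}})=-\tfrac12 p_j^{\mathrm{cg}\top}b$ (using $p_j^{\mathrm{cg}\top}Ap_j^{\mathrm{cg}}=p_j^{\mathrm{cg}\top}b$), whence $p_j^{\mathrm{cg}\top}b$ is non-decreasing. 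A short induction on $j\le t$ then gives $p_{j-1}^\top b\le p_j^\top b\le p_j^{\mathrm{cg}\top}b$: the inequality $p_{j-1}^\top b\le p_{j-1}^{\mathrm{cg}\top}b\le p_j^{\mathrm{cg}\top}b$ (from (b)) fed into the convex combination in (a) yields both inequalities at step $j$, and the base case uses $p_0=0$ and $p_1^{\mathrm{cg}\top}b>0$. Chaining produces $p_t^\top b\ge p_1^\top b=\tfrac{b^\top Ab}{\|Ab\|^2}\|b\|^2$.

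The main obstacle is exactly this monotonicity: the least-squares structure of $\MR$ directly controls $\|Ap_t\|$ and $p_t^\top Ab$ (both plainly monotone in $t$, e.g.\ via $\|Ap_t\|^2=\|b\|^2-\|r_t\|^2$) but not $p_t^\top b$, so one must either import the CG comparison as above or track the signs of the Givens/QR quantities in the update $p_t^\top b=p_{t-1}^\top b+\tau_t(d_t^\top b)$ by hand, which is feasible but delicate. By contrast, part (i) is comparatively routine once it is reduced to $\pi_t(0)>0$, since that is essentially the $\MR$ positive-definiteness certificate already available as Theorem~\ref{thm: Cer}.
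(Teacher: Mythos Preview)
The paper itself does not prove this theorem: it explicitly states that proofs are omitted and refers the reader to \cite[Theorems~3.8 and~3.11]{liu2022minres}. So there is no in-paper proof to compare against, only the cited source.

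Your argument is correct. For part~(i), the reduction $p_t^{\top}b-p_t^{\top}Ap_t=p_t^{\top}r_t=\pi_t(0)\,\|r_t\|^2$ is clean, and the generalized-eigenvalue argument showing that every root of the residual polynomial $\psi_t$ is a positive real (via the pencil $(\tilde T_t^{\top}\tilde T_t,\,T_t)$ with $T_t\succ0$) is a nice structural way to get $\pi_t(0)>0$. For part~(ii), the classical identity $p_t=c_t^2\,p_t^{\mathrm{cg}}+s_t^2\,p_{t-1}$ is indeed valid whenever $T_j\succ0$ for $j\le t$ (so that the CG iterates are well defined), your energy-minimization argument for monotonicity of $p_j^{\mathrm{cg}\top}b$ is correct, and the induction goes through as written.

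By contrast, the proofs in the cited reference proceed by tracking the signs of the scalar quantities $c_j,s_j,\gamma_j^{(1)},\gamma_j^{(2)},\tau_j$ directly through the MINRES recursion, deriving closed-form expressions such as $p_t^{\top}r_t$ and $p_t^{\top}b-p_{t-1}^{\top}b$ in terms of these scalars and showing each has the right sign. That route is more computational and stays entirely within the MINRES update, whereas yours trades that bookkeeping for two structural facts (the residual-polynomial root location and the MINRES--CG convex combination). Your approach is arguably more illuminating and is certainly shorter once those facts are granted; the recursion-based route has the advantage of being fully self-contained within the algorithm's own quantities, without importing the MINRES--CG relation.
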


\section{A $\MR$-based Linesearch Algorithm}\label{sec: LS}
Our algorithmic framework is inspired by linesearch-based, truncated Newton methods. In contrast to previous works, \cite{dahito2019conjugate,dembo1983truncated}, which utilize CR or CG to generate truncated Newton directions, we intend to leverage the favorable properties of $\MR$ for this purpose. Specifically, at the $k$-th iteration, we apply $\MR$ to compute the direction $d_k$ by solving
\begin{equation*}\label{eq:sub: LS_k}
    {\min}_{d \in \mathbb{R}^n}~\| B_k d + g_k \|.
\end{equation*}
The full algorithmic framework and global convergence results are discussed in \Cref{ssec: LS1,ssec: LS2,ssec: KL}. In \Cref{ssec: LS3,ssec: LS4}, we verify that the proposed $\MR$-based approach effectively avoids strict saddle points 
and we present local convergence results to potentially non-isolated minima.

\begin{algorithm}[t]
    \caption{\textbf{$\MR$-based Linesearch Algorithm}}
    \label{alg:main_LS}
    \begin{algorithmic}[1] 
        \State \textbf{Input:} initial point $x_0\in \mathbb{R}^n$, parameters $\rho, \sigma \in (0,1)$, $\alpha \in (0,1]$,  $\cA, s>0$, tolerances $\{\theta_k\} $, parameter sequences $\{a_k\}$, $\{\zeta_k\}$. 
        \For{$k=0,1,...$}
            \State Compute $g_k=\nabla f(x_k)$, select $B_k \in \mathbb{S}_n$, and set $\bar B_k = B_k + \zeta_k I$;
            \If{$\|g_k\| = 0$}
            \State \Return $x^*=x_k$;
            \EndIf
             \State $(d_k,r_{k},\text{flag}_k)= \MR(\bar B_k,-g_k,\theta_k)$ by \Cref{alg: MINRES_LS};
            \If{$\text{flag}_k$\,$=$\,SOL and $d_k^{\top}\bar B_k d_k < \min\{\cA, a_k \|g_k\|^{\alpha} \}\|d_k\|^{2}$}
             \State $d_k=-g_k$, $\text{flag}_k$\,$=$\,GD;
             \EndIf
             \State Compute a step size $\lambda_k$ using \Cref{alg: LS} that satisfies \eqref{armijo} or \eqref{armijo2};
             \State Set $x_{k+1}=x_k + \lambda_k d_k$, $k \leftarrow k+1$;     
        \EndFor
    \end{algorithmic}
\end{algorithm}


\begin{algorithm}[t]
    \caption{\textbf{Linesearch Scheme}}
    \label{alg: LS}
    \begin{algorithmic}[1] 
        \State \textbf{Input:} current iterate $x_k\in \mathbb{R}^n$, a direction $d_k$, parameters $\rho, \sigma \in (0,1)$, $s>0$;
        \State Set $\lambda=s$;
        \If{flag\,$=$\,SOL or GD}    
        \State \textbf{while} {\eqref{armijo} does not hold for $\lambda$} \textbf{do}
             $\lambda=\rho\lambda$; \textbf{end while} 
        \ElsIf{flag\,$=$\,$\NPC$}
            \If{\eqref{armijo2} is not satisfied for $\lambda=s$} 
                \State \textbf{while} {\eqref{armijo2} does not hold for $\lambda$} \textbf{do}
                $\lambda=\rho\lambda$; \textbf{end while} 
            \Else
                \State \textbf{while} {\eqref{armijo2} holds  for $\lambda$}  \textbf{do} $\lambda=\lambda/\rho$; \textbf{end while}
                \State $\lambda = \rho\lambda$;
            \EndIf
        \EndIf
        \State \Return $\lambda_k=\lambda$;
    \end{algorithmic}
\end{algorithm}


\subsection{Algorithm Framework}\label{ssec: LS1}
The full framework is shown in \Cref{alg:main_LS}, which calls \Cref{alg: MINRES_LS} and \Cref{alg: LS}. There are two main features. Firstly, similar to \cite{dahito2019conjugate,dembo1983truncated}, we use three types of directions: the solution direction $p_t$, the scaled $\NPC$ direction $\frac{\|g_k\|}{\|r_{t-1}\|}r_{t-1}$, and the negative gradient direction $-g_k$. Given the parameters $\theta_k$, $\zeta_k \geq 0$, we first use \Cref{alg: MINRES_LS}, 
\begin{equation*}\label{eq:regNewton}
    (d_k, r_k,\text{flag}_k)=\MR(\bar{B}_k, -g_k, \theta_k), \quad \bar{B}_k := B_k + \zeta_k I,  
\end{equation*}
to compute an inexact solution $d_k$ of a regularized Newton system.
By \eqref{eq:rAr}, if $c_{t-1} \gamma^{(1)}_{t} \geq 0$, then $\NPC$ is detected and $\frac{\|g_k\|}{\|r_{t-1}\|}r_{t-1}$ is an $\NPC$ direction with $r_{t-1}^{\top}B_kr_{t-1}\le -\zeta_k \|r_{t-1}\|^2$, where $-\zeta_k$ can be interpreted as a pseudo-negative eigenvalue of $B_k$. The regularization parameter $\zeta_k$ plays an important role in the avoidance of strict saddle points and the local convergence. 
If $d_k = p_t$, we perform the following additional curvature test
\begin{equation}\label{eq:cur}
    p_t^{\top}\bar{B}_k p_t \ge \min\{\cA, a_k \|g_k\|^\alpha\}\|p_t\|^2, \quad \cA, \alpha \in (0,1], \; a_k \geq \bar a > 0, 
 \end{equation}
in \Cref{alg:main_LS}. Proper choice of $\{a_k\}$ allows us to apply Kurdyka-{\L}ojasiewicz-based analysis techniques and to obtain unified global and local convergence results. 
We set $d_k=-g_k$ if \eqref{eq:cur} fails. The test \eqref{eq:cur} aims to avoid small positive curvature. 
Due to $r_{t}=-g_k - \bar B_k p_t$, it follows $p_t^{\top}\bar B_k p_t = -p_t^{\top} (g_k + r_t)$ for all $t$. Thus, \eqref{eq:cur} can be checked efficiently if we return $r_t$. 

Secondly, we use two different linesearch strategies depending on the final choice of $d_k$. If $d_k=p_t$ or $d_k = -g_k$, we choose the step size $\lambda_k=s \rho^j$ by \textit{backtracking} to satisfy the Armijo condition
\begin{equation}\label{armijo}
    f(x_{k}+ \lambda_k d_k)-f(x_{k}) \le \sigma \lambda_k d_k^{\top}g_k,
\end{equation}
where $s>0$, $\rho, \sigma \in (0,1)$ and $j$ is the smallest nonnegative integer such that \eqref{armijo} holds. 
In the $\NPC$ case, $d_k = \frac{\|g_k\|}{\|r_{t-1}\|}r_{t-1}$, we check the condition:
\begin{equation}\label{armijo2}
   f(x_k +\lambda_k d_k) -f(x_k) \le  \sigma \lambda_k g_k^{\top} d_k + \frac{\sigma}{2} \lambda_k^2 d_k^{\top} B_k d_k.
\end{equation}
When \eqref{armijo2} is satisfied for $\lambda_k=s$, we increase $\lambda_k=s \rho^{-j}$ iteratively by the factor $1/\rho >1$, $j = 1,2,\dots$, such that \eqref{armijo2} still holds (i.e., here, $j$ is the largest integer that satisfies the linesearch condition). This is a \emph{forward linesearch}. If \eqref{armijo2} fails for $\lambda_k=s$, we use backtracking instead. Our main motivation is to force the step to move away from the current position (as the current iterate may lie in the region of a strict saddle point). Due to $d_k^{\top} B_k d_k \le -\zeta_k \|d_k\|^2 \le 0$, \eqref{armijo2} can be seen as a stricter version of \eqref{armijo}. The forward linesearch and the modified condition \eqref{armijo2} will play an integral role in the saddle point avoidance, cf. \Cref{ssec: LS3}.  Similar strategies have also been used in~\cite{gould2000exploiting,liu2022newton}.

\subsection{Global Convergence Analysis} \label{ssec: LS2}
We first state some assumptions and preparatory lemmas before introducing the main theorem. Our analysis is based on the key assumptions:
\begin{mdframed}[style=assumptionbox]
\begin{enumerate}[label=\textup{\textrm{(A.\arabic*)}},topsep=2pt,itemsep=0ex,partopsep=0ex,leftmargin=6ex]
    \item \label{A.1} The function $f:\bR^n\to\bR$ is twice continuously differentiable. 
\end{enumerate}
\begin{enumerate}[label=\textup{\textrm{(B.\arabic*)}},topsep=2pt,itemsep=0.1ex,partopsep=0ex,leftmargin=6ex]
    \item \label{B.1} There is $M>0$ such that $\|B_k\| \le M$ for all $k \in \mathbb{N}$.  
    \item \label{B.2} The tolerance parameters $\{\theta_k\}$ satisfy $\theta_k = \min\{\cB,b_k\|g_k\|^\beta\}$ for some $\cB, \beta >0$, $b_k \geq \bar b > 0$, and all $k$.   
    \item \label{B.3} We set $\zeta_k = \min\{\cZ,z_k\|g_k\|^\zeta\}$ with $\cZ > 0$, $\zeta \in (0,1]$, $z_k \geq \bar z > 0$. 
\end{enumerate}
\end{mdframed}

We will also work with the following alternative version of \ref{B.1}:
\begin{mdframed}[style=assumptionbox]
\begin{enumerate}[label=\textup{\textrm{(B.\arabic*)${}^\prime$}},topsep=2pt,itemsep=0ex,partopsep=0ex,leftmargin=7ex]
    \item \label{B.4} For all $k \in \mathbb{N}$, it holds that $B_k = \nabla^2 f(x^k)$.
\end{enumerate}
\end{mdframed}


Before starting our analysis, some remarks on the well-definedness of Algorithms~\ref{alg:main_LS} and \ref{alg: LS} are in order. Due to the forward linesearch mechanism in Algorithm~\ref{alg: LS}, the method might stop after finitely many iterations, in a loop, with a direction along which $f$ approaches $-\infty$. In the following, we show that this situation can be avoided when $f$ is assumed to be bounded from below. 
\begin{lem} \label[lemma]{lem:well-defined}
Let $f : \bR^n \to \bR$ be continuously differentiable and let $x, d \in\mathbb{R}^n$, $B \in \mathbb{S}_n$, and $\sigma \in (0,1)$ be given with $\|d\| \neq 0$, $\nabla f(x)^\top d <0$ and $d^{\top}Bd \le 0$. 
\begin{enumerate}[label=\textup{(\roman*)},topsep=1ex,itemsep=0ex,partopsep=0ex, leftmargin = 25pt]
\item There exists $\bar \lambda_\ell = \bar \lambda_\ell(x,d,B) > 0$ such that 
\begin{equation} \label{eq:lemma-linesearch} f(x+\lambda d)-f(x) \leq \sigma \lambda \nabla f(x)^\top d + \frac{\sigma}{2} \lambda^2 d^\top B d \quad \forall~\lambda \in [0,\bar\lambda_\ell]. \end{equation}  
\item In addition, suppose that $f$ is bounded from below. Then there is  $\bar \lambda_u =\bar \lambda_u(x,d,B) > 0$ such that \eqref{eq:lemma-linesearch} does not hold for all $\lambda > \bar\lambda_u$.
\end{enumerate} 
\end{lem}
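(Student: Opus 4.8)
The plan is to treat the two parts separately, using a second-order Taylor expansion of $f$ along the ray $\lambda \mapsto f(x+\lambda d)$ in both cases. Define $\varphi(\lambda) := f(x+\lambda d) - f(x)$. Since $f$ is continuously differentiable, $\varphi$ is $C^1$ with $\varphi'(0) = \nabla f(x)^\top d < 0$. For part (i), I would argue that the desired inequality holds for all sufficiently small $\lambda \geq 0$ by a first-order argument: rewrite the claim as $\varphi(\lambda) \leq \sigma\lambda\, \nabla f(x)^\top d + \tfrac{\sigma}{2}\lambda^2 d^\top B d$. Because $d^\top B d \leq 0$, the quadratic correction term on the right is nonpositive, so it suffices to show $\varphi(\lambda) \leq \sigma\lambda\,\nabla f(x)^\top d$ for small $\lambda$, plus a control of the gap. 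Concretely, $\varphi(\lambda)/\lambda \to \nabla f(x)^\top d$ as $\lambda \downarrow 0$, and since $\sigma \in (0,1)$ and $\nabla f(x)^\top d < 0$ we have $\sigma\,\nabla f(x)^\top d > \nabla f(x)^\top d$; hence there is $\bar\lambda_1 > 0$ with $\varphi(\lambda) \leq \sigma\lambda\,\nabla f(x)^\top d$ for all $\lambda \in [0,\bar\lambda_1]$. Adding the nonpositive term $\tfrac{\sigma}{2}\lambda^2 d^\top B d$ to the right-hand side only weakens the inequality, so \eqref{eq:lemma-linesearch} holds on $[0,\bar\lambda_\ell]$ with $\bar\lambda_\ell = \bar\lambda_1$. (If one wants to genuinely use the curvature term — e.g.\ to also cover the case $\nabla f(x)^\top d = 0$, which is excluded here — one would instead Taylor-expand to second order assuming $f \in C^2$, but under the stated hypotheses the first-order argument suffices.)

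For part (ii), I would assume, for contradiction, that no such $\bar\lambda_u$ exists: then \eqref{eq:lemma-linesearch} holds for a sequence $\lambda_j \to \infty$. For those $\lambda_j$,
\[
f(x+\lambda_j d) - f(x) \;\leq\; \sigma\lambda_j\, \nabla f(x)^\top d + \frac{\sigma}{2}\lambda_j^2\, d^\top B d.
\]
Now split into two cases according to the sign of $d^\top B d$. If $d^\top B d < 0$, the right-hand side is a downward-opening parabola in $\lambda_j$ and tends to $-\infty$ as $\lambda_j \to \infty$, so $f(x+\lambda_j d) \to -\infty$, contradicting that $f$ is bounded below. If $d^\top B d = 0$, the right-hand side reduces to $\sigma\lambda_j\, \nabla f(x)^\top d$, which again tends to $-\infty$ because $\nabla f(x)^\top d < 0$ and $\sigma > 0$; this again contradicts boundedness below. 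In either case we obtain a contradiction, so there must exist a finite $\bar\lambda_u > 0$ beyond which \eqref{eq:lemma-linesearch} fails.

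I do not anticipate a serious obstacle; the only point requiring a little care is making sure the statement of part (ii) is exactly the logical negation one wants — namely that \eqref{eq:lemma-linesearch} \emph{fails} for every $\lambda > \bar\lambda_u$, not merely for some arbitrarily large $\lambda$. This is genuinely stronger, but it follows from the same computation once one observes that the right-hand side of \eqref{eq:lemma-linesearch}, as a function of $\lambda$, is eventually strictly decreasing and unbounded below (when $d^\top B d < 0$ it is a concave parabola; when $d^\top B d = 0$ it is an affine function with negative slope), while $f(x+\lambda d) - f(x)$ is bounded below by $\inf f - f(x)$; hence for all $\lambda$ large enough the right-hand side lies strictly below this lower bound, so \eqref{eq:lemma-linesearch} cannot hold. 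Taking $\bar\lambda_u$ to be the largest root of the resulting (affine or quadratic) inequality gives the clean threshold statement.
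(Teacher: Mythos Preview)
Your argument for part~(i) contains a sign error. You correctly establish $\varphi(\lambda) \leq \sigma\lambda\,\nabla f(x)^\top d$ for small $\lambda$, but then claim that adding the nonpositive term $\tfrac{\sigma}{2}\lambda^2 d^\top B d$ to the right-hand side ``only weakens the inequality''. This is backwards: since $d^\top B d \leq 0$, adding that term makes the right-hand side \emph{smaller}, so the inequality $\varphi(\lambda) \leq \text{RHS}$ becomes \emph{stronger}, not weaker. From $\varphi(\lambda) \leq A$ and $B \leq 0$ you cannot conclude $\varphi(\lambda) \leq A + B$.

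The repair is immediate and is exactly what the paper does: incorporate the quadratic term into the limit from the start. Since
\[
\lim_{\lambda \downarrow 0} \frac{f(x+\lambda d) - f(x) - \sigma\lambda\,\nabla f(x)^\top d - \tfrac{\sigma}{2}\lambda^2 d^\top B d}{\lambda} = (1-\sigma)\,\nabla f(x)^\top d < 0,
\]
the numerator is negative for all sufficiently small $\lambda > 0$, which is precisely the desired inequality. The quadratic term contributes $\tfrac{\sigma}{2}\lambda\, d^\top B d \to 0$ and so vanishes in the limit; it is the $O(\lambda)$ nature of this term, not its sign, that makes it harmless.

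Your treatment of part~(ii) is correct and in fact more thorough than the paper's, which simply records the claim as ``clear''. Your observation that the right-hand side is eventually strictly below $\inf f - f(x)$ (being affine with negative slope or a concave parabola) cleanly yields the uniform threshold $\bar\lambda_u$.
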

\begin{proof} Part (i) readily follows from
\[ \lim_{\lambda \to 0} \frac{f(x+\lambda d)-f(x)- \sigma\lambda \nabla f(x)^{\top}d - \frac{\sigma}{2} \lambda^2  d^{\top} B d}{\lambda} = (1-\sigma) \nabla f(x)^{\top}d < 0; \]
see also \cite[Lemma 2.1]{gould2000exploiting}. Part (ii) is clear.
\end{proof}
Under the assumptions in \Cref{lem:well-defined}, Algorithms~\ref{alg:main_LS} and \ref{alg: LS} are well-defined, i.e., the linesearch procedures in Algorithm~\ref{alg: LS} will terminate after finitely many steps. In the following, whenever Algorithm~\ref{alg:main_LS} is said to generate a sequence of iterates $\{x_k\}$, we implicitly assume that the forward linesearch in Algorithm~\ref{alg: LS} stops within finitely many iterations, i.e., we will not explicitly assume lower boundedness of $f$. While it is possible to introduce additional adaptive upper bounds for $\lambda_k$ (in Step 9 of Algorithm~\ref{alg: LS}), we will not pursue such an approach here for the ease of exposition.

Under \ref{A.1}--\ref{B.1}, the Newton-CG (and -CR) method (using Powell-Wolfe linesearch) generates directions $d_k$ satisfying $-d_k^{\top}g_k \ge C_k^1 \|g_k\|^2$ and $\|d_k\|\le C_k^2 \|g_k\|$ for some $C_k^1, C_k^2 > 0$, see \cite{dahito2019conjugate,dembo1983truncated,steihaug1981quasi}. Next, we establish similar relationships between $-d_k^{\top}g_k$, $\|d_k\|$, and $\|g_k\|$ when $d_k=p_t$ is obtained by $\MR$ and $\lambda_k$ is chosen by backtracking.

\begin{lem}[Properties of SOL-directions]
   \label[lemma]{lemma: LS_SOL}
   Let $d_k=p_t$ be generated by \Cref{alg:main_LS} (i.e., in Step 11 of \Cref{alg:main_LS}, we have $\mathrm{flag}_k = \mathrm{SOL}$) and set 
   \[ s_k := x_{k+1}-x_k = \lambda_k d_k, \quad C_k := (\|\bar B_k\| + \|\bar B_k\|^2)^{-1}. \]
   %
   It then holds that:
   \begin{enumerate}[label=\textup{(\roman*)},topsep=1ex,itemsep=0ex,partopsep=0ex, leftmargin = 25pt]
       \item $-d_k^{\top}g_k > C_k {\min\{\cA, a_k\|g_k\|^{\alpha}\}} \|g_k\|^2$. \vspace{1ex}
       \item $\|d_k\|=\|p_t\| \le \max\{{\cA}^{-1} \|g_k\|, a_k^{-1}\|g_k\|^{1-\alpha}\}$. 
       \item If $\alpha<1$, then $f(x_k) -f(x_{k+1}) \ge C_k\sigma \|g_k\| \min\{\cA^2 \|s_k\|,  (\frac{a_k}{s^\alpha})^\frac{2}{1-\alpha}\|s_k\|^{\frac{1+\alpha}{1-\alpha}} \}$. 
   \end{enumerate} 
\end{lem}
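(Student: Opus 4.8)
The plan is to exploit the key inequality from Theorem~\ref{lemmaMR}(i), namely $p_t^\top(-g_k) = p_t^\top b > p_t^\top \bar B_k p_t$, together with the curvature test \eqref{eq:cur} which holds because $\mathrm{flag}_k = \mathrm{SOL}$ was accepted in Step~7--9 of Algorithm~\ref{alg:main_LS}. Since $-g_k = b$ and $p_t \in \mathcal K_t(\bar B_k, -g_k)$, standard MINRES identities give $p_t^\top b = p_t^\top \bar B_k p_t + \|r_t\|^2/\dots$ — more carefully, I would start from $r_t = -g_k - \bar B_k p_t$, so $p_t^\top g_k = -p_t^\top \bar B_k p_t - p_t^\top r_t$, and combine with Theorem~\ref{lemmaMR}(i) which already packages the needed strict inequality $p_t^\top b > p_t^\top \bar B_k p_t$. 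For part~(i), the plan is: from $-d_k^\top g_k = p_t^\top b > p_t^\top \bar B_k p_t \ge \min\{\cA, a_k\|g_k\|^\alpha\}\|p_t\|^2$ (using \eqref{eq:cur}), I then need a lower bound on $\|p_t\|^2$ in terms of $\|g_k\|^2$. The natural route is to bound $\|p_t\| \ge \|\bar B_k p_t\|/\|\bar B_k\|$ and then relate $\|\bar B_k p_t\|$ to $\|g_k\|$ via $\bar B_k p_t = -g_k - r_t$ and $\|r_t\| \le \theta_k \beta_1 = \theta_k\|g_k\| \le \|g_k\|$ (since $\theta_k < 1$ by \ref{B.2}, or at least one can absorb constants), giving $\|\bar B_k p_t\| \ge \|g_k\| - \|r_t\|$. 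Chaining these yields a bound of the form $-d_k^\top g_k \ge \min\{\cA, a_k\|g_k\|^\alpha\}\cdot c\,\|g_k\|^2/(\|\bar B_k\|+\|\bar B_k\|^2)$, matching the claimed $C_k$ after carefully tracking the $(\|\bar B_k\| + \|\bar B_k\|^2)^{-1}$ factor (the $\|\bar B_k\|^2$ term arising from squaring a bound like $\|p_t\|\ge \|g_k\|/(\|\bar B_k\|+\dots)$, and one $\|\bar B_k\|$ from the first-order relation).

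For part~(ii), the plan is to again use \eqref{eq:cur}: $\min\{\cA, a_k\|g_k\|^\alpha\}\|p_t\|^2 \le p_t^\top \bar B_k p_t = p_t^\top(-g_k - r_t) \le \|p_t\|\,(\|g_k\| + \|r_t\|)$. Dividing by $\|p_t\|$ (nonzero since $g_k \neq 0$) and by $\min\{\cA, a_k\|g_k\|^\alpha\}$, and using $\|r_t\| \le \theta_k\|g_k\| \le \|g_k\|$ so that $\|g_k\|+\|r_t\| \le 2\|g_k\|$ — or, if one wants to avoid the factor $2$, one uses the sharper MINRES relation $p_t^\top b = \|b\|^2 - \|r_t\|^2$ type identity restricted to the relevant Krylov structure. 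Actually the cleanest is: since $r_t^\top b = \|r_t\|^2$ by \eqref{eq:bTrk} (or its $\bar B_k$-analogue) one gets $p_t^\top \bar B_k p_t = -p_t^\top(g_k+r_t)$ and $-p_t^\top g_k = p_t^\top\bar B_k p_t + p_t^\top r_t \le p_t^\top\bar B_k p_t + \|p_t\|\|r_t\|$; but a direct route giving exactly $\max\{\cA^{-1}\|g_k\|, a_k^{-1}\|g_k\|^{1-\alpha}\}$ is to observe $\min\{\cA, a_k\|g_k\|^\alpha\}\|p_t\| \le \|g_k\| + \|r_t\|$ and then note that by MINRES monotonicity $\|r_t\|$ can be controlled — I will need to check whether the intended bound uses $\|g_k\|+\|r_t\|\le \|g_k\|$ via some orthogonality (which would follow if $p_t^\top r_t \le 0$, itself a MINRES property since $r_t \perp \mathcal K_t(\bar B_k, b) \ni \bar B_k p_t$ but not necessarily $p_t$). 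The safest plan is to carry the constant and note it can be absorbed; if the paper wants the exact form, the relation $\|\bar B_k p_t + g_k\| = \|r_t\| \le \|g_k\|$ combined with $\|\bar B_k p_t\| \le \|\bar B_k\|\|p_t\|$ is not quite enough, so I expect the intended argument uses $p_t^\top \bar B_k p_t \le p_t^\top b = -p_t^\top g_k \le \|p_t\|\|g_k\|$ directly (dropping $r_t$ via Theorem~\ref{lemmaMR}(i)'s $p_t^\top b > p_t^\top\bar B_k p_t$ reversed as $p_t^\top \bar B_k p_t < p_t^\top b$), which immediately gives $\min\{\cA, a_k\|g_k\|^\alpha\}\|p_t\|^2 \le \|p_t\|\|g_k\|$, i.e., $\|p_t\| \le \|g_k\|/\min\{\cA,a_k\|g_k\|^\alpha\} = \max\{\cA^{-1}\|g_k\|, a_k^{-1}\|g_k\|^{1-\alpha}\}$ — clean and exact. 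So the main insight is: \emph{use Theorem~\ref{lemmaMR}(i) to replace $-p_t^\top g_k$ by something $\ge p_t^\top \bar B_k p_t$, then apply the curvature test}.

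For part~(iii), the plan is to combine the Armijo decrease \eqref{armijo} with parts (i) and (ii). From \eqref{armijo}, $f(x_k) - f(x_{k+1}) \ge -\sigma\lambda_k d_k^\top g_k = \sigma(-d_k^\top g_k) \cdot (\|s_k\|/\|d_k\|)$ since $\lambda_k = \|s_k\|/\|d_k\|$. Substituting the lower bound on $-d_k^\top g_k$ from (i) and the upper bound on $\|d_k\|$ from (ii): $f(x_k)-f(x_{k+1}) \ge \sigma C_k \min\{\cA,a_k\|g_k\|^\alpha\}\|g_k\|^2 \cdot \|s_k\| / \max\{\cA^{-1}\|g_k\|, a_k^{-1}\|g_k\|^{1-\alpha}\}$. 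Now one splits into the two cases of which term achieves the minimum/maximum. When $\cA \le a_k\|g_k\|^\alpha$: numerator factor is $\cA\|g_k\|^2$, and I also need the maximum; when additionally $\|g_k\|$ is in the regime where $\cA^{-1}\|g_k\| \ge a_k^{-1}\|g_k\|^{1-\alpha}$, i.e., $a_k \ge \cA\|g_k\|^\alpha$ — but that contradicts $\cA \le a_k\|g_k\|^\alpha$ only if... I need to be careful matching the regimes, but the upshot is one of the cases gives $\sigma C_k \cA^2 \|g_k\|\|s_k\|$ and the other gives $\sigma C_k (a_k/s^\alpha)^{2/(1-\alpha)}\|g_k\|\|s_k\|^{(1+\alpha)/(1-\alpha)}$ — the latter using $\|g_k\| = \|d_k\|^? $... actually here $\|s_k\| = \lambda_k\|d_k\| \le s\|d_k\|$ (since $\lambda_k \le s$ in the backtracking case for SOL), so $\|d_k\| \ge \|s_k\|/s$, and combined with a \emph{lower} bound on $\|d_k\|$ from the first-order relation this converts powers of $\|g_k\|$ into powers of $\|s_k\|$. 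The main obstacle I anticipate is precisely this bookkeeping in part~(iii): correctly threading the case distinctions (which term is active in each $\min$/$\max$) so that the powers of $\|g_k\|$ are eliminated in favor of $\|s_k\|$ using $\|s_k\| \le s\|d_k\|$ and the bounds from (i)--(ii), and verifying that the exponent $(1+\alpha)/(1-\alpha)$ and coefficient $(a_k/s^\alpha)^{2/(1-\alpha)}$ come out exactly as stated (this is the kind of algebra where the exponent $\frac{2}{1-\alpha}$ naturally appears when solving $\|g_k\|^\alpha \sim \|s_k\|$-type relations). Parts (i) and (ii) I expect to be short once the right MINRES property (Theorem~\ref{lemmaMR}) is invoked.
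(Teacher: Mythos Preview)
Your plan for parts (ii) and (iii) is essentially correct. In particular, your route for (ii) via $\min\{\cA,a_k\|g_k\|^\alpha\}\|p_t\|^2 \le p_t^\top\bar B_k p_t < -p_t^\top g_k \le \|p_t\|\|g_k\|$ is clean and equivalent to the paper's argument (which instead uses $p_t^\top\bar B_k p_t \le \|p_t\|\|\bar B_k p_t\|$ together with the identity $\|\bar B_k p_t\|^2 = \|g_k\|^2 - \|r_t\|^2 \le \|g_k\|^2$ coming from \eqref{eq:bTrk}). Part (iii) is indeed a two-case bookkeeping exercise along the lines you describe.

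The gap is in part (i). Your proposed route---use Theorem~\ref{lemmaMR}(i) to get $-d_k^\top g_k > p_t^\top\bar B_k p_t \ge \min\{\cA,a_k\|g_k\|^\alpha\}\|p_t\|^2$ and then lower-bound $\|p_t\|$ via $\|p_t\| \ge \|\bar B_k p_t\|/\|\bar B_k\|$ and $\|\bar B_k p_t\| \ge \|g_k\|-\|r_t\|$---inevitably produces a factor $(1-\theta_k)^2$ (or $(1-\theta_k^2)$ if you use the sharper identity for $\|\bar B_k p_t\|^2$). The lemma does not assume $\theta_k<1$, so this factor cannot be absorbed, and your hand-waving about how the sum $\|\bar B_k\|+\|\bar B_k\|^2$ arises from ``squaring a bound'' is not how the constant actually appears.

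The paper takes a genuinely different path: it invokes Theorem~\ref{lemmaMR}(ii), not (i), to reduce to the first MINRES iterate via $-p_t^\top g_k \ge -p_1^\top g_k = \tfrac{g_k^\top\bar B_k g_k}{\|\bar B_k g_k\|^2}\|g_k\|^2$, and then bounds this Rayleigh-type ratio by a case split. When $t=1$, the curvature test applied to $p_1\propto g_k$ gives $\ge \min\{\cA,a_k\|g_k\|^\alpha\}/\|\bar B_k\|^2$. When $t\ge 2$, one rewrites the ratio in terms of the $2\times 2$ Lanczos matrix $T_2 = V_2^\top\bar B_k V_2$, which is positive definite by Theorem~\ref{thm: Cer}(i) (since NPC was not detected in the first two steps), and an eigenvalue argument yields $\ge 1/\|T_2\| \ge 1/\|\bar B_k\|$. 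Since $\cA\le 1$, both cases are bounded below by $C_k\min\{\cA,a_k\|g_k\|^\alpha\}$---this is precisely where the peculiar denominator $\|\bar B_k\|+\|\bar B_k\|^2$ comes from. The subtlety that forces the split (explained in Remark~\ref{remark}) is that for indefinite $\bar B_k$ one can have $\tfrac{g_k^\top\bar B_k g_k}{\|\bar B_k g_k\|^2} < 1/\|\bar B_k\|$ even when $g_k^\top\bar B_k g_k>0$, so the $t=1$ case genuinely needs the weaker $1/\|\bar B_k\|^2$ bound via the curvature test.
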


\begin{proof}
    First, by Theorem~\ref{lemmaMR}~(ii), we have $p_1 = -\frac{g_k^{\top}\bar{B}_k g_k}{\|\bar{B}_k g_k\|^2} g_k$ and 
    \begin{equation}\label{eq:pkgk}
      -p_t^{\top} g_k \ge -p_1^{\top} g_k = \frac{g_k^{\top}\bar{B}_k g_k}{\|\bar{B}_k g_k\|^2} \|g_k\|^2, \quad t\ge 1.  
    \end{equation}
    When $d_k=p_1$, i.e., $\phi_1 \le \theta_k \beta_1$ in \Cref{alg: MINRES_LS}, then the right hand side of the inequality \eqref{eq:pkgk} can be bounded by the curvature test \eqref{eq:cur} as follows:
    \begin{equation*} 
       \frac{g_k^{\top}\bar{B}_k g_k}{\|\bar{B}_k g_k\|^2} \|g_k\|^2 \ge \frac{\min\{\cA, a_k\|g_k\|^{\alpha}\}}{\|\bar{B}_k\|^2} \|g_k\|^2. 
    \end{equation*}
    Let us now consider the case $d_k=p_t$ for $t\ge 2$, i.e., $\phi_1 > \theta_k\beta_1$ in \Cref{alg: MINRES_LS}. Using $V_{1} = \begin{bmatrix} v_1  \end{bmatrix} \in \mathbb{R}^{n \times 1} $, $p_1 = V_1 q_1 = -\frac{g_k^{\top}\bar{B}_k g_k}{\|\bar{B}_k g_k\|^2} g_k$, we obtain
    \begin{equation} \label{eq:nice}  \frac{g_k^{\top}\bar{B}_k g_k}{\|\bar{B}_k g_k\|^2} = \frac{p_1^{\top}\bar{B}_k p_1}{\|\bar{B}_k p_1\|^2} = \frac{q_1^{\top} V_1^{\top} \bar{B}_k V_1 q_1}{\|\bar{B}_k V_1q_1\|^2}. \end{equation}
    By construction and definition of the Krylov subspace, we have $\bar{B}_k V_t q_t \in \mathcal{K}_{t+1}(\bar{B}_k,-g_k) $ and $V_{t+1} V_{t+1}^{\top}$ is an orthogonal projector onto $\mathcal{K}_{t+1}(\bar{B}_k,-g_k)$. Thus, it holds that $V_{t+1} V_{t+1}^{\top} \bar{B}_k V_t q_t = \bar{B}_k V_t q_t$. Moreover, due to $\mathcal{K}_{t}(\bar{B}_k,-g_k) \subset \mathcal{K}_{t+1}(\bar{B}_k,-g_k)$, we have $ V_t q_t =  V_{t+1} \tilde{q}_{t}$  where $\tilde{q}_{t} = [q_t; 0] \in \mathbb{R}^{t+1}$. In addition, it holds that $T_{t}= V_{t}^{\top} \bar{B}_k V_{t}$ by the Lanczos process. We now use these properties in \eqref{eq:nice} when $t=1$; this yields
    $$ \frac{q_1^{\top} V_1^{\top} \bar{B}_k V_1 q_1}{\|\bar{B}_k V_1q_1\|^2}
    =\frac{\tilde{q}_1^{\top}V_2^{\top} \bar{B}_k V_2 \tilde{q}_1}{\|V_2^{\top} \bar{B}_k V_2 \tilde{q}_1\|^2}
    =\frac{\tilde{q}_1^{\top}T_2\tilde{q}_1}{\|T_2\tilde{q}_1\|^2}.$$
    Due to $d_k=p_t$, $t \ge 2$, it follows $r_0^{\top}\bar{B}_k r_0> 0 $ and $r_1^{\top}\bar{B}_k r_1 >0$, i.e., $\NPC$ is not detected during the first two iterations. By Theorem~\ref{thm: Cer}~(i), we then have $T_2 \succ 0$. Let $T_2= Q \Sigma Q^{\top}$ be an eigendecomposition of $T_2$, where $\Sigma =\mathrm{diag}(\lambda_1, \lambda_2)$ with $\lambda_1, \lambda_2 >0$ and $Q \in \mathbb{R}^{2\times 2}$ is an orthogonal matrix. Hence, setting $ u = [u_1;u_2] = Q^{\top} \tilde{q}_1 $, we obtain
    \begin{equation*}\label{eq:1/Bk}
    \frac{\tilde{q}_1^{\top}T_2\tilde{q}_1}{\|T_2\tilde{q}_1\|^2}
    =\frac{\sum_{i=1}^{2} u_i^2 \lambda_i }{\sum_{i=1}^{2} u_i^2 \lambda_i^2} 
    \ge \frac{\sum_{i=1}^{2} u_i^2 \lambda_i }{\max\{\lambda_1,\lambda_2\} \sum_{i=1}^{2} u_i^2 \lambda_i}
    =\frac{1}{\|T_2\|} \ge \frac{1}{\|\bar{B}_k\|}. 
    \end{equation*}
    The last inequality is due to Theorem~\ref{thm: Cer}~(ii). Using $\cA \in (0,1]$, this finishes the proof of part (i). We continue with part (ii). By \eqref{eq:bTrk}, we have for $t \ge 1$
    \begin{align} \nonumber
      \|\bar{B}_k p_t\|^2 &= \|-g_k-r_t\|^2= \| g_{k}\|^{2}+2g_{k}^{\top}r_t +\|r_t\|^{2} \\
      &= \| g_{k}\|^{2}-\|r_t\|^{2} \le \|g_k\|^2. \nonumber
    \end{align}
    This implies $\min\{\cA, a_k \|g_k\|^{\alpha} \} \|p_t\|^{2} \le p_t^{\top}\bar{B}_k p_t \le \|p_t\| \| \bar{B}_k p_t\| \le \|p_t\|\|g_k\|$, i.e., it holds that $\|d_k\| = \|p_t\| \le \max\{{\cA}^{-1} \|g_k\|, a_k^{-1} \|g_k\|^{1-\alpha}\}$, which proves part (ii).
    %
    If $\cA \leq a_k \|g_k\|^\alpha$, it follows $\|d_k\| \leq {\cA^{-1}}\|g_k\|$ and by \eqref{armijo} and part~(i), we have 
    $(f(x_k)-f(x_{k+1}))/ (\sigma \|g_k\|) \ge C_k\lambda_k \cA \|g_k\| \geq C_k \cA^2 \|s_k\|$.
    Otherwise, if $\cA \geq a_k \|g_k\|^\alpha$, we can infer $\|d_k\| \leq {a_k^{-1}} \|g_k\|^{1-\alpha}$ and due to $\lambda_k \leq s$, $\alpha \in (0,1)$, it holds that
    \begin{align*}
        (f(x_k)-f(x_{k+1}))/ (\sigma \|g_k\|) & \ge C_k\lambda_k a_k \|g_k\|^{1+\alpha} \geq C_k\lambda_k a_k^{\frac{2}{1-\alpha}}\|d_k\|^{\frac{1+\alpha}{1-\alpha}} \\ & \hspace{-4ex} = C_k\lambda_k^{-\frac{2\alpha}{1-\alpha}} a_k^{\frac{2}{1-\alpha}}\|s_k\|^{\frac{1+\alpha}{1-\alpha}} \geq C_k (a_k/s^\alpha)^{\frac{2}{1-\alpha}}\|s_k\|^{\frac{1+\alpha}{1-\alpha}}.
    \end{align*}
    Combining both estimates, this completes the proof of part~(iii).
    %
    %
    \end{proof}

    \begin{rem}\label{remark}
        The proof of Lemma~\ref{lemma: LS_SOL}~(i) discusses the cases $t=1$ and $t\ge2$ separately, which may seem unnatural. However, if $t=1$ and $\bar{B}_k$ is indefinite, we may have $g_k^{\top}\bar{B}_k g_k / {\|\bar{B}_k g_k\|^2}  < {1}/{\|\bar{B}_k\|}$, even though $g_k^{\top} \bar{B}_k g_k >0$, i.e., $\NPC$ is not detected at step $t=1$. For instance, let us consider $\bar B_k = \mathrm{diag}(1,-1)$, $-g_k = (a,b)^\top$, and $|a| > |b|$.
        Then, it follows $g_k^{\top}\bar{B}_k g_k= a^2 -b^2 >0$ and we can easily compute $T_1=v_1^{\top} \bar{B}_k v_1 = {g_k^{\top}\bar{B}_k g_k}/{\|g_k\|^2} >0$ and ${g_k^{\top}\bar{B}_k g_k}/{\| \bar{B}_k g_k\|^2} ={(a^2-b^2)}/{(a^2+b^2)} <  1 = {1}/{\|\bar{B}_k\|}$.
    \end{rem}

If $d_k$ is set to $\frac{\|g_k\|}{\|r_{t-1}\|}r_{t-1}$ or $-g_k$, we can derive analogous bounds. 

\begin{lem}[Properties of $\NPC$-directions]
    \label[lemma]{lemma: LS_NC}
    Let $d_k=\frac{\|g_k\|}{\|r_{t-1}\|}r_{t-1}$ be generated by \Cref{alg:main_LS} (i.e., in Step 11, we have $\mathrm{flag}_k = \NPC$). It holds that:
    \begin{enumerate}[label=\textup{(\roman*)},topsep=1ex,itemsep=0ex,partopsep=0ex, leftmargin = 25pt]
        \item $-d_k^{\top} g_k > \theta_k \|g_k \|^2$.
        \item  $\|d_k\| =  \|g_k\|$.
        \item Let \ref{B.2}--\ref{B.3} be satisfied, then we have $f(x_k) -f(x_{k+1}) \ge \sigma \|g_k\| \|s_k\| \cdot \min\{\cB,\frac{s\cZ}{2},s^{-\beta} b_k \|s_k\|^{\beta}, \frac{s^{1-\zeta}}{2} z_k \|s_k\|^{\zeta}\}$ where $s_k = \lambda_k d_k = \lambda_k \frac{\|g_k\|}{\|r_{t-1}\|}r_{t-1}$.
    \end{enumerate} 
 \end{lem}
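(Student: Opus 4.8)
The plan is to handle the three parts in sequence, with parts~(i) and~(ii) short and part~(iii) --- the sufficient-decrease estimate --- being the substantive one. Part~(ii) is immediate from the definition $d_k=\tfrac{\|g_k\|}{\|r_{t-1}\|}r_{t-1}$, which gives $\|d_k\|=\|g_k\|$. For part~(i), recall that \Cref{alg: MINRES_LS} is applied to $\bar B_k d=-g_k$, so $b=-g_k$, $\beta_1=\|g_k\|$, and \eqref{eq:bTrk} gives $r_{t-1}^\top(-g_k)=\|r_{t-1}\|^2$; hence $-d_k^\top g_k=\tfrac{\|g_k\|}{\|r_{t-1}\|}\|r_{t-1}\|^2=\|g_k\|\,\|r_{t-1}\|$, and it remains to show $\|r_{t-1}\|>\theta_k\|g_k\|$. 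If $\NPC$ is flagged at an inner step $t\ge2$, then the SOL stopping test of \Cref{alg: MINRES_LS} was not triggered at step $t-1$, i.e., $\phi_{t-1}>\theta_k\beta_1$, and since $\phi_{t-1}=\|r_{t-1}\|$ and $\beta_1=\|g_k\|$ this gives the claim; in the degenerate case $t=1$ one has $r_0=b=-g_k$ so that $d_k=-g_k$ and $-d_k^\top g_k=\|g_k\|^2>\theta_k\|g_k\|^2$, using $\theta_k<1$.

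For part~(iii), I would start from the line-search inequality \eqref{armijo2}, which gives $f(x_k)-f(x_{k+1})\ge\sigma\lambda_k(-g_k^\top d_k)-\tfrac{\sigma}{2}\lambda_k^2 d_k^\top B_k d_k$. By \eqref{eq:rAr} and the $\NPC$ detection condition, $r_{t-1}^\top\bar B_k r_{t-1}\le0$, so $r_{t-1}^\top B_k r_{t-1}=r_{t-1}^\top\bar B_k r_{t-1}-\zeta_k\|r_{t-1}\|^2\le-\zeta_k\|r_{t-1}\|^2$ and hence $d_k^\top B_k d_k\le-\zeta_k\|g_k\|^2$. Inserting this together with part~(i) and $\|s_k\|=\lambda_k\|d_k\|=\lambda_k\|g_k\|$ (part~(ii)) gives $f(x_k)-f(x_{k+1})\ge\sigma\theta_k\lambda_k\|g_k\|^2+\tfrac{\sigma}{2}\zeta_k\lambda_k^2\|g_k\|^2=\sigma\|g_k\|\,\|s_k\|\,\bigl(\theta_k+\tfrac12\lambda_k\zeta_k\bigr)$, so it suffices to bound $\theta_k+\tfrac12\lambda_k\zeta_k$ below by the stated minimum. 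Here I would split on the $\NPC$ branch of \Cref{alg: LS}. If \eqref{armijo2} fails at $\lambda=s$ the method backtracks, so $\lambda_k\le\rho s<s$, hence $\|g_k\|=\|s_k\|/\lambda_k>\|s_k\|/s$, and by \ref{B.2} (with $\beta>0$), $\theta_k=\min\{\cB,b_k\|g_k\|^\beta\}\ge\min\{\cB,s^{-\beta}b_k\|s_k\|^\beta\}$; dropping $\tfrac12\lambda_k\zeta_k\ge0$ closes this case. If \eqref{armijo2} holds at $\lambda=s$ the forward line-search produces $\lambda_k\ge s$; dropping $\theta_k\ge0$, when $\zeta_k=\cZ$ we get $\tfrac12\lambda_k\zeta_k\ge\tfrac{s\cZ}{2}$, while when $\zeta_k=z_k\|g_k\|^\zeta$ we substitute $\|g_k\|=\|s_k\|/\lambda_k$ to obtain $\tfrac12\lambda_k\zeta_k=\tfrac12 z_k\|s_k\|^\zeta\lambda_k^{1-\zeta}\ge\tfrac{s^{1-\zeta}}{2}z_k\|s_k\|^\zeta$, using $1-\zeta\ge0$ (from \ref{B.3}) and $\lambda_k\ge s$. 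Taking the minimum over the two branches and sub-cases yields part~(iii); existence of an admissible $\lambda_k$ in either branch is guaranteed by \Cref{lem:well-defined}.

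The main obstacle is the case split on $\lambda_k$ versus $s$ and, within it, identifying which of the two additive terms $\theta_k$ and $\tfrac12\lambda_k\zeta_k$ carries the estimate. In the forward branch $\lambda_k$ can be arbitrarily large, so $\|g_k\|=\|s_k\|/\lambda_k$ may be tiny and $\theta_k$ cannot be lower-bounded in terms of $\|s_k\|$, but the same large $\lambda_k$ inflates the curvature term; conversely, backtracking forces $\lambda_k<s$, hence $\|g_k\|>\|s_k\|/s$, which is exactly what makes $\theta_k$ controllable. The remaining care is in the exponent bookkeeping --- that $\zeta\le1$ makes $\lambda_k^{1-\zeta}$ nondecreasing in $\lambda_k$ and that $\beta>0$ makes the substitution for $\theta_k$ go in the favorable direction --- and in tracking the constants $s^{-\beta}$ and $s^{1-\zeta}$ that surface in the final minimum.
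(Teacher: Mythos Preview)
Your proof is correct and follows essentially the same approach as the paper: both arguments establish the key inequality $f(x_k)-f(x_{k+1})>\sigma\|g_k\|\,\|s_k\|\bigl(\theta_k+\tfrac12\lambda_k\zeta_k\bigr)$ from \eqref{armijo2}, \eqref{eq:bTrk}, and $d_k^\top B_k d_k\le-\zeta_k\|d_k\|^2$, and then bound the bracket from below via a case split. The only organizational difference is that the paper splits first on which branch of $\theta_k=\min\{\cB,b_k\|g_k\|^\beta\}$ is active and then on $\lambda_k\le s$ versus $\lambda_k>s$, whereas you split directly on the backtrack/forward dichotomy of \Cref{alg: LS}; both partitions yield the same four-term minimum, and your split is arguably the more natural one since it ties directly to the algorithm's behavior. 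Your explicit handling of the $t=1$ edge case in part~(i) (invoking $\theta_k<1$) is also slightly more careful than the paper, which simply asserts that ``the inexactness criterion does not hold'' without singling out $t=1$.
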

 \begin{proof}
    In the case $\text{flag}_k = \NPC$, the inexactness criterion does not hold, i.e., we have $\|r_{t-1}\| > \theta_k\|g_k\|$. By \eqref{eq:bTrk} in Theorem~\ref{thm: NPC},  we then obtain $-d_k^{\top} g_k = \|r_{t-1}\|\|g_k\| > \theta_k \|g_k\|^2$. Part (ii) follows from the definition of $d_k$. Using the linesearch condition \eqref{armijo2}, it further holds that
\begin{align} \nonumber  \frac{f(x_k)-f(x_{k+1})}{\sigma \|g_k\|} & \geq -\lambda_k \frac{g_k^\top d_k}{\|g_k\|} - \frac{\lambda_k^2}{2} \frac{d_k^\top B_k d_k}{\|g_k\|} \\ & \geq \lambda_k \|r_{t-1}\| + \frac{\lambda_k^2}{2} \frac{\zeta_k\|d_k\|^2}{\|g_k\|} > \theta_k \|s_k\| + \frac{\lambda_k}{2}\zeta_k \|s_k\|. \label{eq:why-not?}
\end{align}
If $\cB \leq b_k \|g_k\|^\beta$, this implies $f(x_k) - f(x_{k+1}) \geq \cB\sigma \|g_k\| \|s_k\|$. Otherwise, if $\cB \geq b_k \|g_k\|^\beta$, we discuss two sub-cases. First, if $\lambda_k \le s$, we have
\[ {(f(x_k)-f(x_{k+1}))}/{(\sigma \|g_k\|)} \ge b_k \|g_k\|^\beta \|s_k\| = b_k \|d_k\|^\beta \|s_k\| \ge s^{-\beta} b_k\|s_k\|^{1+\beta}. \]
If $\lambda_k >s$, then, due to $\zeta \le 1$ in \ref{B.3}, we can infer
\begin{equation*}
    {2(f(x_k)-f(x_{k+1}))}/{(\sigma \|g_k\|)} \ge \lambda_k\zeta_k  \|s_k\| \ge \min\{s \cZ, s^{1-\zeta} z_k\|s_k\|^{\zeta}\} \|s_k\|. 
\end{equation*}
Combining the previous estimates, this finishes the proof.  
\end{proof}

\begin{lem}[Properties of GD-directions]
    \label[lemma]{lemma: LS_GD}
    Let $d_k=-g_k$ be generated by \Cref{alg:main_LS} (i.e., in Step 11 of \Cref{alg:main_LS}, we have $\mathrm{flag}_k = \mathrm{GD}$).  
    Then:
    \begin{enumerate}[label=\textup{(\roman*)},topsep=1ex,itemsep=0ex,partopsep=0ex, leftmargin = 25pt]
        \item $-d_k^{\top} g_k = \|g_k \|^2$.
        \item  $\|d_k\| =  \|g_k\|$.
        \item $f(x_k) -f(x_{k+1}) \ge \sigma \|g_k\| \|s_k\|$ (where $s_k = \lambda_k d_k = -\lambda_k g_k$).
    \end{enumerate} 
 \end{lem}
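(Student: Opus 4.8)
The plan is to observe that all three statements follow directly from the defining choice $d_k = -g_k$ together with the backtracking Armijo condition \eqref{armijo}, so nothing beyond elementary algebra is required. Parts (i) and (ii) are immediate: substituting $d_k = -g_k$ gives $-d_k^\top g_k = g_k^\top g_k = \|g_k\|^2$ and $\|d_k\| = \|-g_k\| = \|g_k\|$.

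For part (iii), I would first recall that in the GD case Algorithm~\ref{alg: LS} selects the step size $\lambda_k$ by backtracking so that \eqref{armijo} holds; in particular $\lambda_k > 0$. Applying \eqref{armijo} with $d_k = -g_k$ yields
\[
f(x_{k+1}) - f(x_k) \le \sigma \lambda_k d_k^\top g_k = -\sigma \lambda_k \|g_k\|^2 ,
\]
hence $f(x_k) - f(x_{k+1}) \ge \sigma \lambda_k \|g_k\|^2$. Writing $s_k = \lambda_k d_k$ and using part (ii) gives $\|s_k\| = \lambda_k \|d_k\| = \lambda_k \|g_k\|$, so that $\sigma \lambda_k \|g_k\|^2 = \sigma \|g_k\| \cdot \lambda_k \|g_k\| = \sigma \|g_k\| \|s_k\|$, which is exactly the claimed bound.

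The only point deserving a word of justification is that $\lambda_k$ is well-defined and strictly positive; this is guaranteed since $\nabla f(x_k)^\top d_k = -\|g_k\|^2 < 0$ whenever $g_k \neq 0$ (and the algorithm would have terminated at Step 5 otherwise), so Lemma~\ref{lem:well-defined}~(i) with $B = 0$ applies and the backtracking loop in Algorithm~\ref{alg: LS} terminates. There is no genuine obstacle here: this lemma simply serves as the bookkeeping counterpart of Lemmas~\ref{lemma: LS_SOL} and \ref{lemma: LS_NC}, collecting for the GD direction the same three relations ($-d_k^\top g_k$ lower bound, $\|d_k\|$ bound, sufficient-decrease inequality in terms of $\|s_k\|$) that the subsequent global convergence analysis uses to treat the three direction types uniformly.
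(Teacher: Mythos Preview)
Your proof is correct and matches the paper's approach exactly: the paper simply states that the properties follow immediately from $d_k = -g_k$, and you have spelled out precisely those immediate computations together with the use of the Armijo condition \eqref{armijo} for part (iii).
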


The properties listed in Lemma~\ref{lemma: LS_GD} follow immediately from $d_k = -g_k$.

In the classical results on truncated Newton methods, \cite{dembo1983truncated,dahito2019conjugate}, global convergence 
is shown under \ref{A.1}, \ref{B.1}, and the boundedness of the level set $\mathcal{L}_{f}(x_0):=\{x: f(x) \le f(x_0)\}$. Here, we use a different strategy. Firstly, we verify that every accumulation point of $\{x_k\}$ is a stationary point under \ref{A.1}, \ref{B.1}--\ref{B.3} or \ref{A.1}, \ref{B.4}, \ref{B.3}. Secondly, in \Cref{ssec: KL}, we prove convergence of the entire sequence $x_k\to x^*$ 
if the well-known Kurdyka-{\L}ojasiewicz property is satisfied at the accumulation point $x^*$. This geometric approach allows us to avoid additional boundedness assumptions. We now state our first global convergence result which is motivated by \cite[Theorem 3.1]{gould2000exploiting}.

\begin{thm}[Global convergence] \label[theorem]{global_LS}
Let the conditions \ref{A.1}, \ref{B.1}--\ref{B.3} or \ref{A.1}, \ref{B.4}, \ref{B.3} hold and let $\{x_k\}$ be generated by \Cref{alg:main_LS}. Then, every accumulation point of $\{x_k\}$ is a stationary point of \eqref{tar-func}.
\end{thm}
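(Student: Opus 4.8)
The plan is to couple monotone descent of the objective values with the slope and norm estimates of Lemmas~\ref{lemma: LS_SOL}--\ref{lemma: LS_GD}, and then close with the classical Armijo-backtracking contradiction argument. We may assume $\{x_k\}$ is an infinite sequence (otherwise the algorithm returns a point with $g_k=0$, which is stationary), so $g_k\neq 0$ for every $k$; let $\bar x$ be an accumulation point with $x_{k_j}\to\bar x$. First I would record that every iteration is strictly decreasing: by \eqref{armijo}, resp.\ by \eqref{armijo2} together with $d_k^\top B_k d_k\le 0$ (which holds in the $\NPC$ case since $r_{t-1}^\top B_k r_{t-1}\le -\zeta_k\|r_{t-1}\|^2$), one has $f(x_k)-f(x_{k+1})\ge -\sigma\lambda_k g_k^\top d_k$, and $g_k^\top d_k<0$ by Lemmas~\ref{lemma: LS_SOL}(i), \ref{lemma: LS_NC}(i), and \ref{lemma: LS_GD}(i) according to the flag. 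Since $f$ is continuous (\ref{A.1}) and $f(x_{k_j})\to f(\bar x)$, the monotone sequence $\{f(x_k)\}$ converges to $f(\bar x)$; hence $\sum_k\Delta_k = f(x_0)-f(\bar x)<\infty$ with $\Delta_k:=f(x_k)-f(x_{k+1})$, and in particular $\Delta_k\to 0$.

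Now suppose, for a contradiction, that $\nabla f(\bar x)\neq 0$, and set $\epsilon:=\tfrac12\|\nabla f(\bar x)\|>0$. By continuity of $\nabla f$ there are $\delta>0$ and $j_0$ with $\epsilon\le\|g_{k_j}\|\le 3\epsilon$ and $\|x_{k_j}-\bar x\|\le\delta$ for all $j\ge j_0$. Along this subsequence the matrices $\bar B_{k_j}$ have uniformly bounded norm — from \ref{B.1} (or from continuity of $\nabla^2 f$ on $\overline{B}(\bar x,\delta)$ under \ref{B.4}) together with $\zeta_{k_j}\le\cZ$ — so the constants $C_{k_j}$ of Lemma~\ref{lemma: LS_SOL} are bounded below; and, using \ref{B.2}--\ref{B.3} and $\|g_{k_j}\|\in[\epsilon,3\epsilon]$, every parameter appearing in Lemmas~\ref{lemma: LS_SOL}--\ref{lemma: LS_GD} (the quantities $C_k$, $\theta_k$, $\zeta_k$, $\min\{\cA,a_k\|g_k\|^\alpha\}$, the norm bounds on $d_k$, etc.) lies in a fixed compact subset of $(0,\infty)$. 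In particular the slope bounds give $-g_{k_j}^\top d_{k_j}\ge\kappa\|g_{k_j}\|^2\ge\kappa\epsilon^2$ for a fixed $\kappa>0$, so $\sigma\kappa\epsilon^2\lambda_{k_j}\le -\sigma\lambda_{k_j}g_{k_j}^\top d_{k_j}\le\Delta_{k_j}\to 0$ forces $\lambda_{k_j}\to 0$. Two consequences for $j$ large: (a) the forward-linesearch branch of Algorithm~\ref{alg: LS}, which would return $\lambda_{k_j}\ge s$, is excluded, so $\lambda_{k_j}$ is produced by backtracking with $\lambda_{k_j}<s$; and (b) $\|s_{k_j}\|=\lambda_{k_j}\|d_{k_j}\|\to 0$ (with $s_k:=x_{k+1}-x_k$), since $\|d_{k_j}\|$ is bounded above ($=\|g_{k_j}\|\le 3\epsilon$ for GD and $\NPC$, and by Lemma~\ref{lemma: LS_SOL}(ii) for SOL).

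It remains to contradict $\|s_{k_j}\|\to 0$. Fix a large $j$; since $\lambda_{k_j}$ was obtained by backtracking, the previous trial $\tilde\lambda_{k_j}:=\lambda_{k_j}/\rho\le s$ violated the relevant acceptance condition (\eqref{armijo} for SOL/GD, \eqref{armijo2} for $\NPC$). By the mean value theorem there is $\xi_{k_j}$ on the segment from $x_{k_j}$ to $x_{k_j}+\tilde\lambda_{k_j}d_{k_j}$ with
\[ (\nabla f(\xi_{k_j})-g_{k_j})^\top d_{k_j} \;>\; (1-\sigma)\,\big(-g_{k_j}^\top d_{k_j}\big) + \chi_j, \]
where $\chi_j=\tfrac{\sigma}{2}\tilde\lambda_{k_j}\,d_{k_j}^\top B_{k_j}d_{k_j}$ in the $\NPC$ case and $\chi_j=0$ otherwise. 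In the $\NPC$ case $|\chi_j|\le\tfrac{\sigma}{2}\|B_{k_j}\|\,\tilde\lambda_{k_j}\|d_{k_j}\|^2 = \tfrac{\sigma}{2}\|B_{k_j}\|\,(\|s_{k_j}\|/\rho)\,\|d_{k_j}\|\to 0$, while $(1-\sigma)(-g_{k_j}^\top d_{k_j})\ge(1-\sigma)\kappa\epsilon^2>0$; hence $(\nabla f(\xi_{k_j})-g_{k_j})^\top d_{k_j}\ge c_0>0$ for $j$ large, and therefore $\|\nabla f(\xi_{k_j})-g_{k_j}\|\ge c_0/\|d_{k_j}\|$, which is bounded below because $\|d_{k_j}\|$ is bounded above. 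But $\|\xi_{k_j}-x_{k_j}\|\le\tilde\lambda_{k_j}\|d_{k_j}\|=\|s_{k_j}\|/\rho\to 0$, so $\xi_{k_j}\to\bar x$ and $x_{k_j}\to\bar x$, whence $\nabla f(\xi_{k_j})-g_{k_j}=\nabla f(\xi_{k_j})-\nabla f(x_{k_j})\to 0$ by continuity of $\nabla f$. This contradiction shows $\nabla f(\bar x)=0$.

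I expect the main difficulty to be the last step together with the uniform-boundedness bookkeeping of the second: the backtracking contradiction must be run simultaneously for the three direction types and the two linesearch conditions, and one must neutralize the extra curvature term $\tfrac{\sigma}{2}\lambda^2 d^\top B d$ of \eqref{armijo2} introduced by the forward linesearch — which works precisely because the displacements $\|s_{k_j}\|$ vanish while the gradients $\|g_{k_j}\|$ stay bounded away from zero. The remaining ingredients are elementary: continuity of $f$ and $\nabla f$, and the uniform control over the algorithmic sequences granted by \ref{B.1}--\ref{B.3} (or \ref{B.4}, \ref{B.3}).
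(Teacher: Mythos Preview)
Your argument is correct under the first hypothesis set \ref{A.1}, \ref{B.1}--\ref{B.3} and follows the same general contour as the paper. There is, however, a genuine gap under the second set \ref{A.1}, \ref{B.4}, \ref{B.3}: that set does \emph{not} include \ref{B.2}, yet your bookkeeping step explicitly invokes it (``using \ref{B.2}--\ref{B.3}\dots''). Without \ref{B.2} the tolerances $\theta_{k_j}$ are uncontrolled, so your uniform slope estimate $-g_{k_j}^\top d_{k_j}\ge\kappa\|g_{k_j}\|^2$ breaks down for $\NPC$ directions, since Lemma~\ref{lemma: LS_NC}(i) only gives $-d_k^\top g_k>\theta_k\|g_k\|^2$. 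This affects both your deduction of $\lambda_{k_j}\to 0$ (easily repaired: use the quadratic term in \eqref{armijo2} to get $\Delta_{k_j}\ge\tfrac{\sigma}{2}\lambda_{k_j}^2\zeta_{k_j}\|d_{k_j}\|^2\ge\tfrac{\sigma}{2}\bar\zeta\epsilon^2\lambda_{k_j}^2$ with $\bar\zeta:=\min\{\cZ,\bar z\epsilon^\zeta\}$) and, more seriously, your final contradiction, where you need $(1-\sigma)(-g_{k_j}^\top d_{k_j})$ to stay bounded away from zero in order to dominate $|\chi_j|\to 0$.

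The paper closes precisely this gap by replacing the first-order mean-value step with a \emph{second-order} Taylor expansion in the $\NPC$ branch: writing $f(x_{k_\ell}+\tilde\lambda_{k_\ell} d_{k_\ell})-f(x_{k_\ell})=\tilde\lambda_{k_\ell} g_{k_\ell}^\top d_{k_\ell}+\tfrac12\tilde\lambda_{k_\ell}^2 d_{k_\ell}^\top\nabla^2 f(\xi_{k_\ell})d_{k_\ell}$, subtracting the violated condition \eqref{armijo2}, and then discarding the favourably signed gradient term leaves
\[
\frac{d_{k_\ell}^\top\bigl(\nabla^2 f(\xi_{k_\ell})-B_{k_\ell}\bigr)d_{k_\ell}}{\|d_{k_\ell}\|^2}>(1-\sigma)\bar\zeta>0,
\]
where the lower bound uses only $d_k^\top B_kd_k\le-\zeta_k\|d_k\|^2$ and \ref{B.3}. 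Under \ref{B.4} one has $B_{k_\ell}=\nabla^2 f(x_{k_\ell})$, so the left-hand side tends to zero by continuity of $\nabla^2 f$, giving the contradiction. The point is that the second hypothesis set trades control of $\theta_k$ for exact Hessians, and that trade can only be cashed in at second order; your first-order mean-value inequality cannot see the regularization parameter $\zeta_k$ that has to carry the argument when $\theta_k$ is uncontrolled.
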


\begin{proof}
We first assume that the conditions \ref{A.1}, \ref{B.1}--\ref{B.3} are satisfied. 

Let $x^*$ be an accumulation point of $\{x_k\}$ and let $\{x_{k_{\ell}}\}$ be a subsequence converging to $x^*$. By the linesearch conditions \eqref{armijo} and \eqref{armijo2}, $\{f(x_k)\}$ is non-increasing and converges to some limit $\xi\in \bR \cup \{-\infty\}$. By the continuity of $f$, we can infer $f(x_{k_\ell}) \to f(x^*)$ and by the uniqueness of limits, this implies $\xi=f(x^*) \in \bR$. Thus, defining $\mathcal N_{\mathrm{npc}} := \{k: \mathrm{flag}_k = \NPC\}$, we have
\begin{equation}\label{eq:LSf}
    \begin{aligned}
        \infty &> f(x_{0})-f(x^*)
        =\lim _{i \rightarrow \infty} {\sum}_{k=0}^{i-1} f(x_{k})-f(x_{k+1}) \\
        &\geq -\sigma {\sum}_{k=0}^{\infty} \lambda_k d_k^{\top} g_k -\frac{\sigma}{2} {\sum}_{k\in \mathcal N_{\mathrm{npc}}} \lambda_k^2 d_k^{\top} B_k d_k \\
        &\geq -\sigma {\sum}_{k\notin \mathcal N_{\mathrm{npc}}} \lambda_k d_k^{\top} g_k + \frac{\sigma}{2}{\sum}_{k \in \mathcal N_{\mathrm{npc}}} \lambda_k^2 \zeta_k \|d_k\|^2.
    \end{aligned}
\end{equation} 
If $\|\nabla f(x^*)\| \neq 0$, then there exist $\epsilon >0$ and $L \in \mathbb{N}$ such that $\|g_{k_\ell}\| >\epsilon$ for $\ell \ge L$. Furthermore, by \ref{B.1}, $a_k \geq \bar a$, $\zeta_k \leq \cZ$, and parts~(i)--(ii) in Lemmas \ref{lemma: LS_SOL} and \ref{lemma: LS_GD}, there is $\bar \epsilon >0$ such that for all $k_\ell \notin \mathcal N_{\mathrm{npc}}$ and $\ell \ge L$, we have 
\begin{equation}\label{eq:dkgk/dk}
    -\frac{d_{k_\ell}^{\top} g_{k_\ell}}{\|d_{k_\ell}\|} \ge \frac{ \min\left\{ \frac{\min\{\cA, a_{k_\ell}\|g_{k_\ell}\|^{\alpha}\}}{\|\bar B_{k_\ell}\| + \|\bar B_{k_\ell}\|^2},1 \right\}  \|g_{k_\ell}\|^2}{\max\{\cA^{-1} \|g_{k_\ell}\|, a_{k_\ell}^{-1}\|g_{k_\ell}\|^{1-\alpha} ,\|g_{k_\ell}\| \}} > \bar \epsilon.
\end{equation}
Combining \eqref{eq:LSf}, \eqref{eq:dkgk/dk} and noting $\zeta_{k_\ell} \geq \min\{\cZ, \bar z \epsilon^\zeta\} =: \bar\zeta$ (for all $\ell \geq L$), this shows $\lambda_{k_\ell} \|d_{k_\ell}\| \to 0$ as $\ell \to \infty$.

We first consider the case where the Armijo condition \eqref{armijo} is used to determine the step size $\lambda_k$ for all $k$ sufficiently large, i.e., we assume $|\mathcal N_{\mathrm{npc}}| < \infty$. Since the step size $\rho^{-1} \lambda_{k_\ell}$ does not satisfy \eqref{armijo}, we have
\begin{equation}\label{eq:noarmijo}
    f (x_{k_\ell}+ \rho^{-1} \lambda_{k_\ell} d_{k_\ell})-f(x_{k_\ell}) >\sigma \rho^{-1} \lambda_{k_\ell} g_{k_\ell}^{\top} d_{k_\ell}.
\end{equation}
 In addition, by the mean-value theorem, there is $\tau_{k_\ell} \in [0,1]$ such that
\begin{equation}\label{eq:taylor}
    f(x_{k_\ell}+ \rho^{-1} \lambda_{k_\ell} d_{k_\ell})  -f(x_{k_\ell}) = \rho^{-1}\lambda_{k_\ell} \nabla f(x_{k_\ell}+ \tau_{k_\ell}\rho^{-1} \lambda_{k_\ell} d_{k_\ell})^{\top} d_{k_\ell}.
\end{equation}
Setting $\hat d_{k_\ell} = \tau_{k_{\ell}} \rho^{-1} \lambda_{k_{\ell}} d_{k_{\ell}}$ and combining \eqref{eq:noarmijo} and \eqref{eq:taylor}, we obtain
\[ \frac{\sigma d_{k_{\ell}}^{\top} g_{k_{\ell}}}{\|d_{k_{\ell}}\|} < \frac{\nabla f(x_{k_{\ell}}+\hat d_{k_\ell})^{\top} d_{k_{\ell}}}{\|d_{k_{\ell}}\|} \leq\|\nabla f(x_{k_{\ell}}+\hat d_{k_\ell})-\nabla f(x_{k_{\ell}})\|+\frac{d_{k_{\ell}}^{\top} g_{k_{\ell}}}{\|d_{k_{\ell}}\|}.
\]
Since $\{x_{k_{\ell}}\}$ converges to $x^*$ and $\nabla f$ is uniformly continuous on compact sets, there exist $r >0$ and $L'>L$ such that 
$$\|\nabla f(x_{k_\ell} + d) - \nabla f(x_{k_\ell})\| \le (1- \sigma) \bar \epsilon, \quad \forall~d\in B_{r}(0),\quad \forall~\ell >L'. $$
Due to $\lambda_{k_\ell} \|d_{k_\ell}\| \to 0$, it holds that $\|\hat d_{k_\ell}\| \to 0$ as $\ell \to \infty$ and together with \eqref{eq:dkgk/dk}, this yields the contradiction
$ (1-\sigma)\bar \epsilon > -(1-\sigma){d_{k_\ell}^{\top} g_{k_\ell}}/{\|d_{k_\ell}\|} > (1-\sigma) \bar \epsilon$ for all $\ell$ sufficiently large.

Next, we consider the case where the alternative linesearch condition \eqref{armijo2} is used. We only need to discuss the situation when the subsequence $\{k_\ell\}$ contains infinitely many elements for which \eqref{armijo2} is applied, i.e., $|\{k_\ell\} \cap \mathcal N_{\mathrm{npc}}| = \infty$. As before, for all such $k_\ell$, since the step size $\rho^{-1} \lambda_{k_\ell}$ does not satisfy \eqref{armijo2}, we have 
\begin{equation}\label{eq:noarmijo2}
    f(x_{k_\ell}+ \rho^{-1} \lambda_{k_\ell} d_{k_\ell})-f(x_{k_\ell}) >\sigma (\rho^{-1} \lambda_{k_\ell} g_{k_\ell}^{\top} d_{k_\ell} +  \rho^{-2} \lambda_{k_\ell}^2 d_{k_\ell}^\top B_{k_\ell} d_{k_\ell}/2).
\end{equation}
In addition, by the mean-value theorem, there is again $\tau_{k_\ell} \in [0,1]$ such that
\begin{equation} \label{eq:taylor2}
    f(x_{k_{\ell}}+\rho^{-1} \lambda_{k_{\ell}} d_{k_{\ell}})-f(x_{k_{\ell}}) = \rho^{-1} \lambda_{k_{\ell}} g_{k_\ell}^{\top} d_{k_\ell} + \rho^{-2} \lambda_{k_{\ell}}^2 d_{k_\ell}^\top \hat B_{k_\ell} d_{k_\ell}/2,
\end{equation}
where $\hat B_{k_\ell} = \nabla^2 f(x_{k_{\ell}}+\hat d_{k_\ell})$ and $\hat d_{k_\ell} = \tau_{k_{\ell}} \rho^{-1} \lambda_{k_{\ell}} d_{k_{\ell}}$. Hence, combining \eqref{eq:noarmijo2}, \eqref{eq:taylor2}, Lemma~\ref{lemma: LS_NC} (i), $\|g_{k_\ell}\| \geq \epsilon$, $\theta_{k_\ell} \geq \min\{\cB,\bar b \epsilon^\beta\} =: \bar\theta$, and $d_{k_\ell}^\top B_{k_\ell} d_{k_\ell} \le -\zeta_{k_\ell} \|d_{k_\ell} \|^2 \le 0$, this yields
\begin{equation}\label{eq:above1}
    \begin{aligned}
        &\rho^{-1} \lambda_{k_{\ell}} d_{k_\ell}^\top (\hat B_{k_\ell}-B_{k_\ell}) d_{k_\ell}/2 > (\sigma-1)(g_{k_\ell}^{\top} d_{k_\ell} + \rho^{-1} \lambda_{k_{\ell}} d_{k_\ell}^\top B_{k_\ell} d_{k_\ell}/2) \\
        &\ge (\sigma-1) g_{k_\ell}^{\top} d_{k_\ell} > (1-\sigma) \theta_{k_\ell} \|d_{k_\ell}\| \|g_{k_\ell}\| > (1-\sigma) \bar\theta \epsilon \|d_{k_\ell}\|, \quad \ell \ge L.
    \end{aligned}
\end{equation}
Dividing \eqref{eq:above1} by $\|d_{k_\ell}\|$ and using $\lambda_{k_\ell} \|d_{k_\ell}\|, \|\hat d_{k_\ell}\| \to 0$, $\ell \to \infty$ and the bound- edness of $B_{k_\ell}$ and $\hat B_{k_\ell}$, we obtain a contradiction. This proves $\nabla f(x^*) = 0$. 

Finally, let the assumptions \ref{A.1}, \ref{B.4}, \ref{B.3} hold. In this case, our previous steps are still applicable. In particular, the bounds \eqref{eq:LSf} and \eqref{eq:dkgk/dk} remain valid, since $\|B_{k_\ell}\| = \|\nabla^2 f(x_{k_\ell})\|$ is bounded for all $\ell \ge L$. Thus, we still have $\lambda_{k_\ell} \|d_{k_\ell}\| \to 0$ as $\ell \to \infty$. Since the derivations in the case $|\mathcal N_{\mathrm{npc}}| < \infty$ do not depend on \ref{B.1} and \ref{B.2}, we only need to prove a contradiction when \eqref{armijo2} is used. Combining \eqref{eq:noarmijo2} and \eqref{eq:taylor2} and applying $d_{k_\ell}^\top B_{k_\ell} d_{k_\ell} \le -\zeta_{k_\ell} \|d_{k_\ell} \|^2 \le -\bar\zeta\|d_{k_\ell}\|^2$, it follows
        \begin{equation}\label{eq:above1new}
            \begin{aligned}
                &\rho^{-1} \lambda_{k_{\ell}} d_{k_\ell}^\top (\hat B_{k_\ell}-B_{k_\ell}) d_{k_\ell}/2 > (\sigma-1)(g_{k_\ell}^{\top} d_{k_\ell} + \rho^{-1} \lambda_{k_{\ell}} d_{k_\ell}^\top B_{k_\ell} d_{k_\ell}/2) \\
                &\ge (\sigma-1) \rho^{-1} \lambda_{k_{\ell}} d_{k_\ell}^\top B_{k_\ell} d_{k_\ell}/2 > (1-\sigma) \rho^{-1} \lambda_{k_{\ell}} \bar \zeta \|d_{k_\ell}\|^2/2.
            \end{aligned}
        \end{equation}
        Dividing \eqref{eq:above1new} by $\rho^{-1} \lambda_{k_{\ell}} \|d_{k_\ell}\|^2$, using $\|\hat d_{k_\ell}\| \to 0$, $\ell \to \infty$ and the continuity of $\nabla^2 f$, we again obtain a contradiction. 
\end{proof}

\subsection{Convergence under the Kurdyka-{\L}ojasiewicz Inequality} \label{ssec: KL}

We now provide additional convergence results for \Cref{alg:main_LS} invoking the celebrated Kurdyka-{\L}ojasiewicz (KL) inequality. The KL property characterizes the growth behavior of the function $f$ locally around critical points and can be used to establish stronger convergence guarantees for the iterates.

By $\mathfrak L_\eta$, we denote the class of all continuous and concave desingularizing functions $\varrho: [0, \eta) \to \bR_+$ such that
\begin{equation*} 
\varrho \in C^1((0, \eta)), \quad \varrho(0)=0, \quad \varrho'(x)>0, \quad \forall~x\in (0,\eta).
\end{equation*}
In the following, we state a definition of the KL property for $f$, cf. \cite{absil2005convergence,AttBol09}. 

\begin{defn} \label[definition]{def:KL}
We say that $f$ has the Kurdyka-{\L}ojasiewicz property at $\bar x$ if there exist $\eta \in (0,\infty]$, a neighborhood $V$ of $\bar x$, and a function $\varrho \in \mathfrak L_\eta$ such that for all $x \in V\cap \{x \in \bR^n : 0 < f(x) - f(\bar x) < \eta\}$ the KL-inequality holds, i.e.,
\begin{equation}\label{eq:KL}
        \varrho^\prime(f(x)-f(\bar x)) \|\nabla f(x)\| \ge 1.
    \end{equation}
If the mapping $\varrho$ further satisfies $\varrho(t) = ct^{1-\theta}$ for some $c > 0$
and $\theta \in [0, 1)$, then we say that $f$ has the {\L}ojasiewicz property at $\bar x$ with exponent $\theta$.
\end{defn}

KL-based analysis techniques have been highly successful and enjoy wide applicability in practice, \cite{absil2005convergence,AttBol09,AttBolSva13,BolSabTeb14}. The KL and {\L}ojasiewicz property introduced in \Cref{def:KL} hold for the rich and ubiquitous classes of tame, subanalytic, and semialgebraic functions, see, e.g., \cite{lojasiewicz1963,lojasiewicz1993,kurdyka1998,BolDanLew06}. 

Let $\{x_k\}$ be generated by \Cref{alg:main_LS} and let us define the set of accumulation points $\mathcal A := \{x^*: \liminf_{k\to\infty}\|x_k-x^*\| = 0\}$. We consider the following key assumptions.

\begin{mdframed}[style=assumptionbox]
\begin{enumerate}[label=\textup{\textrm{(A.\arabic*)}},topsep=2pt,itemsep=0ex,partopsep=0ex,leftmargin=6ex]
        \setcounter{enumi}{1}
        \item \label{A.2} $\nabla^2 f$ is locally Lipschitz continuous on a neighborhood $V_H$ of $\mathcal A$.
\end{enumerate}
\begin{enumerate}[label=\textup{\textrm{(C.\arabic*)}},topsep=2pt,itemsep=0.5ex,partopsep=0ex,leftmargin=6ex]
    \setcounter{enumi}{0}
    \item \label{C.1} The parameter sequences $\{a_k\}$ and $\{z_k\}$ satisfy ${\sum}_{k=1}^\infty a_k^{-{1}/{\alpha}} < \infty$ and ${\sum}_{k=1}^\infty z_k^{-{1}/{\zeta}} < \infty$.
    \item \label{C.2} $f$ satisfies the KL property, \eqref{eq:KL}, at some accumulation point $x^* \in \mathcal A$. 
    
    \item \label{C.3} The function $f$ satisfies the {\L}ojasiewicz property at some accumulation point $x^* \in \mathcal A$ with exponent $\theta \in [0,1)$. 
\end{enumerate} 
\end{mdframed}

\ref{C.1} specifies the growth behavior of the parameters $\{a_k\}$ and $\{z_k\}$. A simple exemplary choice is given by $a_k = (k \log(k)^2)^{\alpha}$ and $z_k = (k \log(k)^2)^{\zeta}$. Based on \ref{C.2}, we show that the whole sequence of iterates $\{x_k\}$, generated by \Cref{alg:main_LS}, converges to a critical point $x^*$. Our proof relies on the descent estimates in \Cref{lemma: LS_SOL,lemma: LS_NC,lemma: LS_GD} (iii) and adapts the original analysis in \cite{absil2005convergence} and the mathematical induction technique recently utilized in \cite{jia2023convergence} to our MINRES-based framework. 

Based on assumption \ref{A.2}, we now first derive lower bounds on the stepsizes when the iterates are close to an accumulation point.

\begin{lem}\label[lemma]{lemma:stepsize}
Let $\{x_k\}$ be generated by \Cref{alg:main_LS} and let \ref{A.1}--\ref{A.2} hold. Let $r > 0$ and $x^* \in \mathcal A$ be given such that $B_{2r}(x^*) \subset V_H$ and let $L_g \equiv L_g(x^*)$ and $L_H \equiv L_H(x^*)$ denote the Lipschitz constants of $\nabla f$ and $\nabla^2 f$ on $B_{2r}(x^*)$, respectively. Suppose further that $x_k, x_{k+1} \in B_{2r}(x^*)$. Then, it holds that:
    \begin{enumerate}[label=\textup{(\roman*)},topsep=1ex,itemsep=0ex,partopsep=0ex, leftmargin = 25pt]
        \item $\lambda_k \ge \min\{s, \frac{2\rho(1-\sigma)}{L_{g}}\min\{\cA, a_k \|g_k\|^\alpha\}\}$ if $\mathrm{flag}_k \neq \mathrm{NPC}$. 
        \item   $\lambda_k \ge \frac{ 3\rho(\sigma-1) d_k^{\top} B_k d_k }{ L_H \|d_k\|^3}$ if $\mathrm{flag}_k=\mathrm{NPC}$ and $B_k = \nabla^2 f(x_k)$.
    \end{enumerate}
\end{lem}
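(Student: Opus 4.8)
The plan is to extract a lower bound on $\lambda_k$ directly from the mechanics of the linesearch in \Cref{alg: LS}. In both cases the accepted stepsize is either the initial trial value $s$, or it has a predecessor $\mu:=\rho^{-1}\lambda_k$ that was \emph{rejected} by the governing sufficient-decrease test; combining this rejection with a Taylor expansion of $f$ along $d_k$ then forces $\lambda_k$ to be large. Throughout I would use that $d_k$ is a strict descent direction, $g_k^\top d_k<0$, which holds by \Cref{lemma: LS_SOL} (i), \Cref{lemma: LS_NC} (i), \Cref{lemma: LS_GD} (i), and that $\nabla f$, $\nabla^2 f$ are Lipschitz on $B_{2r}(x^*)$ with constants $L_g$, $L_H$.

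\textbf{Part (i).} If $\lambda_k=s$ the bound is immediate. Otherwise backtracking against \eqref{armijo} was used, so $\mu=\rho^{-1}\lambda_k\le s$ violates \eqref{armijo}: $f(x_k+\mu d_k)-f(x_k)>\sigma\mu\, g_k^\top d_k$. The descent lemma (using $L_g$-Lipschitzness of $\nabla f$) gives $f(x_k+\mu d_k)-f(x_k)\le\mu\,g_k^\top d_k+\tfrac{L_g}{2}\mu^2\|d_k\|^2$; subtracting, dividing by $\mu>0$ and rearranging yields $\mu>2(1-\sigma)(-g_k^\top d_k)/(L_g\|d_k\|^2)$, hence $\lambda_k>\tfrac{2\rho(1-\sigma)}{L_g}\cdot\tfrac{-g_k^\top d_k}{\|d_k\|^2}$. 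It then suffices to note $-g_k^\top d_k\ge\min\{\cA,a_k\|g_k\|^\alpha\}\|d_k\|^2$: if $\mathrm{flag}_k=\mathrm{SOL}$, then $d_k=p_t$ passed the curvature test \eqref{eq:cur} and \Cref{lemmaMR} (i) gives $-g_k^\top p_t>p_t^\top\bar B_k p_t\ge\min\{\cA,a_k\|g_k\|^\alpha\}\|p_t\|^2$; if $\mathrm{flag}_k=\mathrm{GD}$, then $d_k=-g_k$ so $-g_k^\top d_k/\|d_k\|^2=1\ge\cA\ge\min\{\cA,a_k\|g_k\|^\alpha\}$ since $\cA\le1$. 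Combining with the case $\lambda_k=s$ gives (i).

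\textbf{Part (ii).} Here $\mathrm{flag}_k=\mathrm{NPC}$, so $d_k=\tfrac{\|g_k\|}{\|r_{t-1}\|}r_{t-1}$, $B_k=\nabla^2 f(x_k)$, and $d_k^\top B_k d_k\le-\zeta_k\|d_k\|^2\le0$. Inspecting the NPC branch of \Cref{alg: LS}, in both sub-cases (forward search when \eqref{armijo2} holds at $\lambda=s$, backtracking otherwise) the returned $\lambda_k$ has a predecessor $\mu=\rho^{-1}\lambda_k$ at which \eqref{armijo2} \emph{fails}:
\[ f(x_k+\mu d_k)-f(x_k)>\sigma\mu\,g_k^\top d_k+\tfrac{\sigma}{2}\mu^2\,d_k^\top B_k d_k. \]
On the other hand, $L_H$-Lipschitzness of $\nabla^2 f$ gives the cubic bound $f(x_k+\mu d_k)-f(x_k)\le\mu\,g_k^\top d_k+\tfrac12\mu^2\,d_k^\top B_k d_k+\tfrac{L_H}{6}\mu^3\|d_k\|^3$. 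Subtracting, dividing by $\mu>0$, and then dropping the nonnegative term $(1-\sigma)(-g_k^\top d_k)\ge0$ from the left-hand side, one is left with $\tfrac12(1-\sigma)\mu(-d_k^\top B_k d_k)<\tfrac{L_H}{6}\mu^2\|d_k\|^3$; dividing by $\mu>0$ once more and multiplying by $\rho$ yields $\lambda_k>\tfrac{3\rho(1-\sigma)(-d_k^\top B_k d_k)}{L_H\|d_k\|^3}=\tfrac{3\rho(\sigma-1)d_k^\top B_k d_k}{L_H\|d_k\|^3}$, which is (ii).

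\textbf{Main obstacle.} The genuinely delicate point is not the elementary calculus but the validity of the two Taylor estimates, which are taken along the segment from $x_k$ to the \emph{trial} point $x_k+\mu d_k$; this requires that segment to lie in $B_{2r}(x^*)$, where $L_g$ and $L_H$ are defined. In Part (i) one has $\mu\le s$ and $\|d_k\|$ is controlled by $\|g_k\|$ via \Cref{lemma: LS_SOL} (ii) and \Cref{lemma: LS_GD} (ii) (and $\|g_k\|$ is small near $x^*$, since accumulation points are stationary by \Cref{global_LS}), so containment can be arranged. In the forward-linesearch branch of Part (ii), however, $\mu=\rho^{-1}\lambda_k$ may exceed $s$, and its containment in $B_{2r}(x^*)$ must be guaranteed through the radius/buffer accounting of the surrounding global-to-local (KL-based) convergence argument; threading this condition through cleanly is where the real care is needed.
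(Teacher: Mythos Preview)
Your proposal is correct and takes essentially the same approach as the paper: both parts combine failure of the linesearch test at the predecessor $\mu=\rho^{-1}\lambda_k$ with a Taylor-type upper bound (the descent lemma in (i), the cubic expansion in (ii)) and then invoke \Cref{lemmaMR}~(i) together with \eqref{eq:cur}, or the NPC property $d_k^\top B_k d_k\le 0$. The containment issue you flag---that the rejected trial point $x_k+\mu d_k$ need not lie in $B_{2r}(x^*)$---is real and is equally glossed over in the paper's own proof; it is effectively handled only within the surrounding induction of \Cref{global_LS_KL_new}.
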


\begin{proof}
By the descent lemma, it follows
\begin{align} \label{eq:lipschitz}
f(x_{k+1}) - f(x_k) - \sigma \lambda_k d_k^\top g_k \leq (1-\sigma) \lambda_k d_k^\top g_k + \frac{L_g}{2}\lambda_k^2 \|d_k\|^2. 
\end{align}
Thus, \eqref{armijo} is satisfied for all $\lambda_k \in (0,\mu^1_k]$ where $\mu^1_k := -2(1-\sigma)\frac{d_k^{\top}g_k}{L_g\|d_k\|^2}$ and we have $\lambda_k \geq \rho \mu^1_k$. Specifically, for $d_k=p_t$, applying \Cref{lemmaMR}~(i) and \eqref{eq:cur}, it follows
\[ \lambda_{k}  \geq \frac{2\rho(1-\sigma) p_t^{\top} \bar B_k p_t}{L_g\|p_{t}\|^{2}}  \ge  \frac{2\rho(1-\sigma) }{L_g} \min\{\cA, a_k \|g_k\|^{\alpha} \}. \]
For $d_k=-g_{k}$, using $-d_k^{\top}g_k = \|d_k\|^2$, we obtain $\lambda_k \ge 2\rho(1-\sigma)/L_g$. Noting $\cA \leq 1$, this proves (i). Next, applying the Lipschitz continuity of $\nabla^2 f$ (on $B_{2r}(x^*)$) and $g_k^\top d_k \leq 0$, we have  
\begin{align*}
   & f(x_{k+1}) - f(x_k) - \sigma (\lambda_k g_k^{\top} d_k + \lambda_k^2  d_k^{\top} B_k d_k/2 ) \\ & \hspace{6ex} \leq (1-\sigma)\lambda_k (g_k^{\top} d_k + \lambda_k  d_k^{\top} B_k d_k/2 ) + {L_H}\lambda_k^3 \|d_k\|^3/6 \\ & \hspace{6ex} \leq 0.5{\lambda_k^2}((1-\sigma)d_k^\top B_kd_k + L_H\lambda_k\|d_k\|^3/3).   
\end{align*}
Hence, \eqref{armijo2} holds for all $\lambda_k \in (0,\mu_k^2]$ where $\mu_k^2 := 3(\sigma-1)d_k^\top B_kd_k / (L_H\|d_k\|^3)$ and, by the linesearch mechanism for $\mathrm{NPC}$-directions, (cf.\ Algorithm~\ref{alg: LS}), we can infer $\lambda_k \geq \rho\mu_k^2$. This finishes the proof.
\end{proof}

In the following and based on the lower bounds in \Cref{lemma:stepsize}, we present one of our main convergence results for \Cref{alg:main_LS}.

\begin{thm}[Convergence under the KL property, I] \label[theorem]{global_LS_KL_new}
 Let \ref{A.1}--\ref{A.2}, \ref{B.4}, \ref{B.3}, and \ref{C.1} hold, and let $\{x_k\}$ be generated by \Cref{alg:main_LS}. Suppose that $x^* \in \mathcal A$ is an accumulation point of $\{x_k\}$ at which assumption \ref{C.2} is satisfied. Then either the algorithm stops after finitely many iterations at a stationary point or the whole sequence $\{x_k\}$ converges to $x^*$ with $\|g_k\| \to 0$.
\end{thm}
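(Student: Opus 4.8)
The plan is to run the classical Kurdyka--{\L}ojasiewicz descent argument (in the spirit of \cite{absil2005convergence} and the induction device of \cite{jia2023convergence}), feeding it the per-step estimates of Lemmas~\ref{lemma: LS_SOL}--\ref{lemma: LS_GD}\,(iii) and the step-size lower bounds of Lemma~\ref{lemma:stepsize}. First I would dispose of the trivial alternative: if $\|g_k\|=0$ at some iteration, Algorithm~\ref{alg:main_LS} stops at a stationary point and we are done. So assume $\{x_k\}$ is infinite; then $g_k\neq 0$ for all $k$, and by the linesearch conditions \eqref{armijo}/\eqref{armijo2} together with the descent parts~(iii) of Lemmas~\ref{lemma: LS_SOL}--\ref{lemma: LS_GD}, the sequence $\{f(x_k)\}$ is strictly decreasing. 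Since $x_{k_\ell}\to x^*$ along a subsequence and $f$ is continuous, $f(x_k)\downarrow f(x^*)$, and a strictly decreasing sequence cannot attain its limit, so $\Delta_k:=f(x_k)-f(x^*)>0$ for all $k$. By Theorem~\ref{global_LS} (valid under \ref{A.1}, \ref{B.4}, \ref{B.3}), every accumulation point is stationary, hence $\nabla f(x^*)=0$. Using \ref{C.2}, fix $\eta\in(0,\infty]$, a neighborhood $V$ of $x^*$, and $\varrho\in\mathfrak L_\eta$ realizing \eqref{eq:KL}; using \ref{A.2}, shrink to pick $r>0$ with $\overline{B_{2r}(x^*)}\subset V\cap V_H$. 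On $B_{2r}(x^*)$, $\nabla f$ and $\nabla^2 f$ have finite Lipschitz constants $L_g,L_H$ and $\|\nabla^2 f\|$ is bounded, so (by \ref{B.4}, \ref{B.3}) $\|\bar B_k\|$ is uniformly bounded whenever $x_k\in B_{2r}(x^*)$, which yields a uniform lower bound $\underline C>0$ for the constants $C_k$ of Lemma~\ref{lemma: LS_SOL}.

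The heart of the proof is a one-step estimate valid, for $k$ large, whenever $x_k,x_{k+1}\in B_{2r}(x^*)$. Writing $s_k=x_{k+1}-x_k$ and combining the KL inequality $\varrho'(\Delta_k)\|g_k\|\ge 1$ with the concavity of $\varrho$ gives $\varrho(\Delta_k)-\varrho(\Delta_{k+1})\ge \varrho'(\Delta_k)(f(x_k)-f(x_{k+1}))\ge (f(x_k)-f(x_{k+1}))/\|g_k\|$. Into this I would plug the decrease bounds of Lemmas~\ref{lemma: LS_SOL}\,(iii), \ref{lemma: LS_NC}\,(iii), \ref{lemma: LS_GD}\,(iii) --- each of the shape $f(x_k)-f(x_{k+1})\ge \sigma\|g_k\|\,\|s_k\|\,\kappa_k$ --- and split cases according to which term realizes the minimum defining $\kappa_k$. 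In the \emph{good} cases, $\kappa_k$ is bounded below by a positive constant $\bar\kappa$ (the GD step; the constant branches $\cA$, $\cB$, $\tfrac{s\cZ}{2}$; and, using Lemma~\ref{lemma:stepsize}\,(ii) with $d_k^\top B_k d_k\le -\zeta_k\|d_k\|^2$ to lower bound $\|s_k\|=\lambda_k\|g_k\|$ by a constant, the NPC branch when $\zeta_k=\cZ$), so one obtains the Absil--Mahony--Andrews-type inequality $\varrho(\Delta_k)-\varrho(\Delta_{k+1})\ge \sigma\bar\kappa\,\|s_k\|$. In the \emph{remaining} cases, the very fact that an $\|s_k\|$-dependent term is the active minimum forces $\|s_k\|$ to be small: if the branch $(a_k/s^\alpha)^{2/(1-\alpha)}\|s_k\|^{(1+\alpha)/(1-\alpha)}$ is active in a SOL-step then $\|s_k\|\le c\,a_k^{-1/\alpha}$, and if the branch $\tfrac{s^{1-\zeta}}{2}z_k\|s_k\|^\zeta$ is active in an NPC-step then $\|s_k\|\le c\,z_k^{-1/\zeta}$; both contributions are summable by \ref{C.1}. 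The residual NPC branch $s^{-\beta}b_k\|s_k\|^\beta$ has to be compared against the other branches and against the constant lower bound on $\|s_k\|$ from Lemma~\ref{lemma:stepsize}\,(ii) (equivalently, against the auxiliary decrease $f(x_k)-f(x_{k+1})\ge\tfrac{\sigma}{2}\zeta_k\|s_k\|^2$) to be absorbed into one of the two regimes above. Summing over a block of consecutive iterations lying in $B_{2r}(x^*)$ then gives $\sum_k \|s_k\|\lesssim \varrho(\Delta_{k_0}) + \sum_{k\ge k_0}\big(a_k^{-1/\alpha}+z_k^{-1/\zeta}\big)<\infty$.

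Finally I would execute the standard stay-in-the-ball induction. Since $x^*\in\mathcal A$ and $\Delta_k\downarrow 0$, choose $k_0$ large enough that $x_{k_0}\in B_{r/4}(x^*)$, $0<\Delta_{k_0}<\eta$, and $\varrho(\Delta_{k_0})+\sum_{k\ge k_0}(a_k^{-1/\alpha}+z_k^{-1/\zeta})$ is so small that the implied bound on $\sum_{k\ge k_0}\|s_k\|$ is below $r/2$. Then prove by induction on $m$ that $x_{k_0},\dots,x_{k_0+m}\in B_r(x^*)$ together with the telescoped estimate for $\sum_{k=k_0}^{k_0+m-1}\|s_k\|$; the induction step bootstraps the ``last'' step (first showing $x_{k_0+m+1}\in B_{2r}(x^*)$, then applying the one-step estimate, then upgrading to $B_r(x^*)$ via the telescoped bound). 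Consequently $\{x_k\}$ is Cauchy, hence convergent; the limit coincides with $x^*$ because $\{x_{k_\ell}\}$ is a subsequence converging to $x^*$, and then $\|g_k\|=\|\nabla f(x_k)\|\to\|\nabla f(x^*)\|=0$ by continuity. I expect the main obstacle to be twofold and intertwined: making the case split in the one-step estimate genuinely exhaustive --- in particular taming the NPC backtracking sub-case where $\lambda_k<s$ and the $b_k$-dependent branch is active, which requires carefully playing Lemma~\ref{lemma:stepsize}\,(ii) and the auxiliary quadratic decrease against \ref{C.1} --- and handling the bookkeeping of the final induction once forward linesearch is allowed, since an NPC step can be long, so $\|s_k\|$ is not a priori small and control must come entirely from the matching largeness of $f(x_k)-f(x_{k+1})$ in the good NPC branch.
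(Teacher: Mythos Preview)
Your overall architecture matches the paper's: KL inequality plus a stay-in-the-ball induction, fed by Lemmas~\ref{lemma: LS_SOL}--\ref{lemma: LS_GD} and the step-size bounds of Lemma~\ref{lemma:stepsize}. Your device for the ``bad'' SOL branch --- deducing $\|s_k\|\le c\,a_k^{-1/\alpha}$ directly from the inequality $T_{2,k}\le T_{1,k}$ and then summing via \ref{C.1} --- is in fact simpler than the paper's reverse H\"older argument and would work equally well.

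There is, however, a genuine gap in your treatment of $\NPC$ steps. You invoke Lemma~\ref{lemma: LS_NC}\,(iii), but that lemma explicitly requires \ref{B.2}, which is \emph{not} among the hypotheses here (only \ref{A.1}--\ref{A.2}, \ref{B.4}, \ref{B.3}, \ref{C.1} are assumed). Without \ref{B.2} the tolerances $\theta_k$ are unconstrained, so the branches ``$\cB$'' and ``$s^{-\beta}b_k\|s_k\|^\beta$'' are simply undefined. The paper handles this by discarding the $\theta_k$-term in \eqref{eq:why-not?} altogether and using only
\[
f(x_k)-f(x_{k+1})\ \ge\ \tfrac{\sigma}{2}\,\lambda_k\zeta_k\,\|g_k\|\,\|s_k\|
\]
for every $\NPC$ step --- precisely your ``auxiliary decrease'', but as the \emph{sole} $\NPC$ estimate, not as a patch for one residual branch. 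Since $\lambda_k$ now sits in the bound, it must be controlled, and this is done by carrying $\lambda_k\ge\bar\lambda$ (for $k\in\mathcal N_{\mathrm{npc}}$) as part of the induction hypothesis, established by contradiction: if $\lambda_k<\bar\lambda\le s$ and $x_k\in B_\delta(x^*)$, then $\|s_k\|=\lambda_k\|g_k\|\le r$, so $x_{k+1}\in B_{2r}(x^*)$, Lemma~\ref{lemma:stepsize}\,(ii) applies, and together with $d_k^\top B_kd_k\le-\zeta_k\|d_k\|^2$ yields $\lambda_k\ge\bar\tau\ge\bar\lambda$. This contradiction bootstrap is how the forward-linesearch difficulty you correctly anticipate is actually resolved. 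Once $\lambda_k\ge\bar\lambda$, your comparison trick works again: if $\tfrac{\bar\lambda^{1-\zeta}}{2}z_k\|s_k\|^\zeta<\tfrac{\bar\lambda\cZ}{2}$ then $\|s_k\|\le\bar\lambda(\cZ/z_k)^{1/\zeta}$, which is summable by \ref{C.1}.

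Two minor points: Lemma~\ref{lemma: LS_SOL}\,(iii) assumes $\alpha<1$, so the case $\alpha=1$ needs a separate (easy) argument via Lemma~\ref{lemma: LS_SOL}\,(ii); and Lemma~\ref{lemma:stepsize}\,(ii) gives a lower bound on $\lambda_k$, not on $\|s_k\|$ --- the latter still tends to zero.
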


\begin{proof} We may assume that the algorithm does not stop after finitely many steps. Then, by \Cref{global_LS}, $x^*$ is a stationary point of $f$. Due to $f(x_k) \searrow f(x^*)$, $k \to \infty$ (as shown in \eqref{eq:LSf}), there exists $k_\eta \in \bN$ such that for all $k \ge k_\eta$, we have $0 < f(x_k)-f(x^*) \le \eta$. 
    
We first focus on the case $\alpha<1$ and recall $\mathcal N_{\mathrm{npc}} := \{k: \mathrm{flag}_k = \NPC\}$. By \eqref{eq:why-not?}, we have 
\[ f(x_k)-f(x_{k+1}) \geq \sigma\|g_k\|\|s_k\| \cdot {\lambda_k\zeta_k}/{2}\]
if $\mathrm{flag}_k = \NPC$. (Notice that we do not assume condition \ref{B.2} to hold and hence, \Cref{lemma: LS_NC} (iii) is not applicable). Combining Lemma \ref{lemma: LS_SOL}~(iii), Lemma \ref{lemma: LS_GD}~(iii), and \eqref{eq:why-not?}, it follows 
    \begin{equation}\label{eq:KL_allnew}
        f(x_k) -f(x_{k+1}) \ge \tau_k \sigma \|g_k\| \|s_k\|, \quad \tau_k := \begin{cases} \min\{ T_{1,k}, T_{2,k} \} & \text{if $k \notin \mathcal N_{\mathrm{npc}}$}, \\ \min\{T_{3,k},T_{4,k} \} & \text{if $k \in \mathcal N_{\mathrm{npc}}$}, \end{cases}
    \end{equation}
where $T_{1,k} := \min\{ C_k\cA^2, 1\}$, $T_{2,k} := C_k (a_k/s^{\alpha})^\frac{2}{1-\alpha}\|s_k\|^{\frac{2\alpha}{1-\alpha}}$, $T_{3,k} := \lambda_k  \cZ/2$ and $T_{4,k} := \lambda_k^{1-\zeta}z_k\|s_k\|^{\zeta}/2$. (The expression for $T_{4,k}$ is due to $\|g_k\|^\zeta = \|d_k\|^\zeta = \lambda_k^{-\zeta}\|s_k\|^\zeta$, cf. \Cref{lemma: LS_NC} (ii)). One of the main steps of the proof is to ensure that $\{\lambda_k\}$ (in $ T_{3,k}$ and $T_{4,k}$) remains bounded from below.

Let $\{x_{k_\ell}\}$ be a subsequence converging to $x^*$. Furthermore, let $r>0$ be given such that $B_{2r}(x^*) \subset V_H$ and let $L_g$, $L_H$ denote the Lipschitz constants of $\nabla f$ and $\nabla^2 f$ on $B_{2r}(x^*)$.
Using the continuity of $\nabla f$, $\zeta_k = \min\{\cZ,z_k\|g_k\|^\zeta\}$, $\zeta \leq 1$, and \Cref{lemma: LS_NC} (ii), there exists $\delta>0$ such that for all $x_k \in B_{\delta}(x^*)$ and $k \in \mathcal N_{\mathrm{npc}}$, it holds that $\|g_k\| \le \min\{{r}/{s},1\}$ and 
\begin{equation} \label{eq:use-later} \frac{3\rho(\sigma-1) d_k^{\top} B_k d_k }{L_H \|d_k\|^3} \ge \frac{3\rho(1-\sigma) \zeta_k }{L_H \|d_k\|} \ge \frac{3\rho(1-\sigma)}{L_H} \min\Big\{ \frac{\cZ}{ \|g_k\|}, \frac{\bar z}{ \|g_k\|^{1-\zeta}}\Big\} \ge  \bar\tau, \end{equation}
where $\bar \tau := 3\rho(1-\sigma)\min\{\cZ,\bar z\}/L_H$. Let us define
\begin{equation} \label{eq:defsigma-new}
       \sigma_{1,k} := {\sum}_{i=k}^\infty a_i^{-\frac{1}{\alpha}}, \quad \sigma_{2,k} := {\sum}_{i=k}^\infty z_i^{-\frac{1}{\zeta}}, \quad \varrho_{k} := \varrho(f(x_{k}) - f(x^*)).
\end{equation}

    Next, there is $k_{\ell_0} \ge k_\eta$ such that $\nu \le \min\{r, \delta\}$ and $B_\nu (x^*)  \subset V$, where
    $$ \nu := \|x_{k_{\ell_0}}-x^*\| + \frac{\tilde\varrho}{\tilde T\sigma} + (s\tilde\sigma_1)^{1-p_1}\Big(\frac{\tilde\varrho}{\tilde C \sigma}\Big)^{p_1} +  \tilde\sigma_2^{1-p_2}\Big(\frac{\tilde\varrho}{\tilde T_2 \sigma}\Big)^{p_2}, $$
    $\tilde\varrho := \varrho_{k_{\ell_0}}$, $\tilde\sigma_{i} := \sigma_{i,k_{\ell_0}}$, $i=1,2$, $p_1 := \frac{1-\alpha}{1+\alpha}$, $p_2 := \frac{1}{1+\zeta}$, $\tilde C := [(L_g+\cZ)(L_g+\cZ+1)]^{-1}$, 
    $\tilde T = \min\{\tilde C\cA^2, 1,\bar\lambda\cZ/2\}/2$, $\tilde T_2:= \bar \lambda^{1-\zeta}/2$, 
    $\bar \lambda:= \min\{s,\bar \tau\}$, and the set $V$ is defined in \ref{C.2}. The existence of such index $k_{\ell_0}$ follows from $x_{k_\ell} \to x^*$, $\varrho_{k_\ell}\to 0$ and $\sigma_{i,k_\ell} \to 0$, $i=1,2$, as $\ell \to \infty$. We now show by induction that the following three statements hold for all $k \ge k_{\ell_0}$:
    \begin{equation}\label{eq:induction-new}
        x_k \in B_\nu (x^*), \quad \lambda_k \ge \bar \lambda\;\text{if $k \in \mathcal N_{\mathrm{npc}}$}, \quad {\sum}_{i=k_{\ell_0}}^{k} \|s_i\|\le \nu - \|x_{k_{\ell_0}}-x^*\|. 
    \end{equation}
    Per definition of $\nu$, it holds that $x_{k_{\ell_0}} \in B_\nu (x^*)$. Suppose that we have $\lambda_{k_{\ell_0}} < \bar \lambda$ and $k_{\ell_0} \in \mathcal N_{\mathrm{npc}}$. Then, due to $x_{k_{\ell_0}} \in B_\nu(x^*) \subset B_\delta(x^*)$, $\|d_{k_{\ell_0}}\|=\|g_{k_{\ell_0}}\|$, and $\bar\lambda\leq s$, we obtain $\|x_{k_{\ell_0}+1}-x_{k_{\ell_0}}\| = \lambda_{k_{\ell_0}} \|d_{k_{\ell_0}}\| \le s  \|g_{k_{\ell_0}}\| \le r$. Using $x_{k_{\ell_0}} \in B_\nu(x^*) \subset B_r(x^*)$, this implies $x_{k_{\ell_0}+1} \in B_{2r}(x^*)$. Thus, part (ii) in \Cref{lemma:stepsize} is applicable and invoking $x_{k_{\ell_0}} \in B_\delta(x^*)$ and \eqref{eq:use-later}, it follows
    \[ \lambda_{k_{\ell_0}} \ge  \frac{ 3\rho(\sigma-1) d_{k_{\ell_0}}^{\top} B_{k_{\ell_0}} d_{k_{\ell_0}} }{ L_H \|d_{k_{\ell_0}}\|^3} \ge  \min\{s,\bar \tau\} = \bar \lambda. \]
    This is a contradiction and we either need to have $\lambda_{k_{\ell_0}} \ge \bar\lambda$ and $k_{\ell_0} \in \mathcal N_{\mathrm{npc}}$ or $k_{\ell_0} \notin \mathcal N_{\mathrm{npc}}$. Noticing $B_\nu(x^*) \subset B_{2r}(x^*)$, $\max_{x \in B_{2r}(x^*)} \|\nabla^2 f(x)\| \leq L_g$, and $\zeta_k \leq \cZ$, we have $C_{k_{\ell_0}} \ge \tilde C$ by the definition of $C_k$ and $\tilde C$. Moreover, by \eqref{eq:KL}, \eqref{eq:KL_allnew}, $x_{k_{\ell_0}} \in V$, and the concavity and nonnegativity of $\varrho$, it holds that
    \begin{align} \nonumber
    \tau_{k_{\ell_0}}\sigma \|s_{k_{\ell_0}}\| &\le \frac{f(x_{k_{\ell_0}}) - f(x_{k_{\ell_0}+1})}{\|g_{k_{\ell_0}}\|} \\ & \hspace{-12ex} \le  \varrho^\prime(f(x_{k_{\ell_0}})-f(x^*))({f(x_{k_{\ell_0}}) - f(x_{k_{\ell_0}+1})}) 
          \le \varrho_{k_{\ell_0}} - \varrho_{k_{\ell_0}+1} \le \tilde\varrho. \label{eq:use-this-one-new}
    \end{align}
    Finally, let us define ${\mathcal K}_1 := \{k \ge k_{\ell_0}, k \notin \mathcal N_{\mathrm{npc}}: T_{1,k} \leq T_{2,k}\}$, ${\mathcal K}_2 := \{k \ge k_{\ell_0}, k \notin \mathcal N_{\mathrm{npc}}: T_{1,k} > T_{2,k}\}$, ${\mathcal K}_3 := \{k \ge k_{\ell_0}, k \in \mathcal N_{\mathrm{npc}}: T_{3,k} \leq T_{4,k}\}$, and ${\mathcal K}_4 := \{k \ge k_{\ell_0}, k \in \mathcal N_{\mathrm{npc}}: T_{3,k} > T_{4,k}\}$. Invoking \eqref{eq:KL_allnew}, \eqref{eq:use-this-one-new}, and $C_{k_{\ell_0}} \ge \tilde C$, the condition $T_{2,k_{\ell_0}}\|s_{k_{\ell_0}}\| \leq \tilde\varrho/\sigma$ implies 
    \[ \|s_{k_{\ell_0}}\| \leq s^{\frac{2\alpha}{1+\alpha}} a_{k_{\ell_0}}^{-\frac{2}{1+\alpha}} ({\tilde\varrho}/{(\tilde C\sigma)})^{\frac{1-\alpha}{1+\alpha}} \leq (s \tilde\sigma_1)^{\frac{2\alpha}{1+\alpha}} ({\tilde\varrho}/{(\tilde C\sigma)})^{\frac{1-\alpha}{1+\alpha}}. \]
    %
    %
    Similarly, in the cases $k_{\ell_0} \notin \mathcal K_2$, we can obtain the following 
    %
   \[ 
    \begin{cases}
     T_{1,k_{\ell_0}}\|s_{k_{\ell_0}}\| \leq \frac{\tilde\varrho}{\sigma} \implies \|s_{k_{\ell_0}}\| \leq \frac{\tilde\varrho}{2\tilde T\sigma} & \text{if $k_{\ell_0} \in {\mathcal K}_1$}, \\
        T_{3,k_{\ell_0}}\|s_{k_{\ell_0}}\| \leq \frac{\tilde\varrho}{\sigma} \implies \|s_{k_{\ell_0}}\| \leq \frac{2\tilde\varrho}{\cZ\bar\lambda\sigma} \leq \frac{\tilde\varrho}{2\tilde T\sigma} & \text{if $k_{\ell_0} \in {\mathcal K}_3$}, \\
        T_{4,k_{\ell_0}}\|s_{k_{\ell_0}}\| \leq \frac{\tilde\varrho}{\sigma} \implies \|s_{k_{\ell_0}}\| \le \tilde\sigma_2^{\frac{\zeta}{1+\zeta}} (\frac{\tilde\varrho}{\tilde T_2 \sigma})^{\frac{1}{1+\zeta}}  & \text{if $k_{\ell_0} \in {\mathcal K}_4$},
    \end{cases} \]
    where we used $\lambda_{k_{\ell_0}} \ge \bar \lambda$ if $k_{\ell_0} \in {\mathcal K}_3 \cup {\mathcal K}_4$. Thus, \eqref{eq:induction-new} holds for $k=k_{\ell_0}$. 

    Suppose \eqref{eq:induction-new} is satisfied for some $k > k_{\ell_0}$ and all $ k_{\ell_0} \le i \le k$. By the triangle inequality and the definition of $\nu$, we then have 
    $$\|x_{k+1} -x^*\| \le \|x_{k_{\ell_0}} -x^*\| + {\sum}_{i=k_{\ell_0}}^{k}\|s_i\| \le \nu. $$
    This proves $x_{k+1} \in B_\nu (x^*)$. The condition $\lambda_{k+1} \geq \bar\lambda$ if $k+1\in\mathcal N_{\mathrm{npc}}$ can be verified again via a contradiction and is identical to the base case $k = k_{\ell_0}$. 
    %
    %
    Applying the estimate \eqref{eq:use-this-one-new} for the iterates $x_i$, $i = k_{\ell_0},\dots,k+1$ (which is possible due to $x_{i} \in B_\nu (x^*)$ for all $i = k_{\ell_0},\dots,k+1$), we first note that
    \begin{equation}
    {\sum}_{i=k_{\ell_0}}^{k+1} \tau_i \|s_i\| \leq {\sum}_{i=k_{\ell_0}}^{k+1 } \frac{f(x_{i}) - f(x_{i+1})}{\sigma \|g_{i}\|} \le 
           {\sum}_{i=k_{\ell_0}}^{k+1} \frac{\varrho_{i} - \varrho_{i+1}}{\sigma}  
           \le \frac{\tilde \varrho}{\sigma}. \label{eq:sum1-new}
    \end{equation}
    By the inverse H\"older inequality and recalling $p_1 = \frac{1-\alpha}{1+\alpha} < 1$,  we further have
    \begin{align*}
        {\sum}_{i=k_{\ell_0}, i \in \mathcal K_2}^{k+1} \tau_i \|s_i\| & \geq \tilde C s^{-\frac{2\alpha}{1-\alpha}}{\sum}_{i=k_{\ell_0}, i \in \mathcal K_2}^{k+1} a_i^{\frac{2}{1-\alpha}} \|s_i\|^{\frac{1+\alpha}{1-\alpha}} \nonumber \\
            & \hspace{-12ex} \ge \tilde C s^{-\frac{2\alpha}{1-\alpha}} \Big({\sum}_{i=k_{\ell_0}, i \in \mathcal K_2}^{k+1} (a_i^{\frac{2}{1-\alpha}})^{\frac{-1}{\frac{1}{p_1}-1}} \Big)^{{1-\frac{1}{p_1}}} \Big({\sum}_{i=k_{\ell_0}, i \in \mathcal K_2}^{k+1} \|s_i\| \Big)^{\frac{1}{p_1}} \nonumber \\
            & \hspace{-12ex} = \tilde C s^{1-\frac{1}{p_1}} \Big({\sum}_{i=k_{\ell_0}, i \in \mathcal K_2}^{k+1} a_i^{-\frac{1}{\alpha}} \Big)^{1-\frac{1}{p_1}}\Big({\sum}_{i=k_{\ell_0}, i \in \mathcal K_2}^{k+1} \|s_i\| \Big)^{\frac{1}{p_1}} \nonumber \\
            & \hspace{-12ex} \geq \tilde C (s\tilde\sigma_1)^{1-\frac{1}{p_1}} \Big({\sum}_{i=k_{\ell_0}, i \in \mathcal K_2}^{k+1} \|s_i\| \Big)^{\frac{1}{p_1}}. 
        \end{align*}
    Here, the estimate $C_i \geq \tilde C$ is valid due to $x_i \in B_\nu(x^*) \subset B_{2r}(x^*)$ for all $i=k_{\ell_0},\dots,k+1$. Combining this with \eqref{eq:sum1-new}, we can infer ${\sum}_{i=k_{\ell_0}, i \in \mathcal K_2}^{k+1} \|s_i\| \leq (s\tilde\sigma_1)^{1-p_1}(\tilde\varrho/(\tilde C\sigma))^{p_1}$. Mimicking these steps and invoking the induction hypothesis, we can further obtain
    \[ {\sum}_{i=k_{\ell_0}, i \in \mathcal K_1}^{k+1 } \|s_i\|  \le  \frac{\tilde\varrho}{2\tilde T\sigma} , \quad {\sum}_{i=k_{\ell_0}, i \in \mathcal K_3}^{k+1 } \|s_i\|  \le \frac{2\tilde\varrho}{\sigma \bar\lambda \cZ} \leq \frac{\tilde\varrho}{2\tilde T\sigma}, \] 
    and $\sum_{i=k_{\ell_0}, i \in \mathcal K_4}^{k+1} \|s_i\| \le \tilde\sigma_2^{1-p_2}({\tilde\varrho}/{(\tilde T_2\sigma)})^{p_2}$. This shows that \eqref{eq:induction-new} is also satisfied for $k+1$ and completes the induction. Since \eqref{eq:induction-new} holds for all $k \ge k_{\ell_0}$, taking $k \to \infty$, we can deduce that $\{x_k\}$ is a Cauchy sequence and hence, it follows $x_k \to x^*$ and $\lim_{k\to \infty} \|g_k\| = \|\nabla f(x^*)\| = 0$.

    Finally, consider the special case $\alpha=1$. In this case, Lemma \ref{lemma: LS_SOL}~(iii) is not applicable if $\cA \ge a_k\|g_k\|$. However, if $\mathrm{flag}_k = \mathrm{SOL}$ and $\cA \ge a_k\|g_k\|$, then \Cref{lemma: LS_SOL}~(ii) implies $1 \ge a_k \|d_k\|$. Thus, redefining the index sets $\mathcal K_2 := \{k \ge k_{\ell_0}: \mathrm{flag}_k = \mathrm{SOL}, \cA \ge a_k\|g_k\|\}$ and $\mathcal K_1 := \{k \ge k_{\ell_0}: k \notin \mathcal N_{\mathrm{npc}}, k \notin \mathcal K_2\}$, it follows that $\bigcup_{i=1}^4 \mathcal K_i \supset \{k \ge k_{\ell_0}\}$ and ${\sum}_{i=k_{\ell_0}, i \in \mathcal K_2}^{\infty} \|s_i\| \le {\sum}_{i=k_{\ell_0}, i \in \mathcal K_2}^{\infty} \lambda_i/a_i \le s \tilde \sigma_1$. Consequently, noting $p_1 = 0$, the previous steps in the case $\alpha < 1$ are still all valid using the same definition of $\nu$\footnote{In the case $\mathrm{flag}_k = \mathrm{SOL}$ and $\cA < a_k\|g_k\|$, we have $f(x_k)-f(x_{k+1}) \geq C_k\cA^2 \sigma \|g_k\|\|s_k\|$, as shown in the proof of \Cref{lemma: LS_SOL}. Hence, using \Cref{lemma: LS_GD}~(iii), this ensures that \eqref{eq:KL_allnew} holds with $\tau_k = T_{1,k}$ for all $k\in\mathcal K_1$.}. This completes the proof.
\end{proof}

The results derived in \Cref{global_LS_KL_new} can be shown to remain valid if conditions \ref{A.2} and \ref{B.4} are replaced by \ref{B.1}--\ref{B.2}, i.e., if the matrices $\{B_k\}$ remain bounded and a specific format of the tolerance parameters $\{\theta_k\}$ is used. We now state this variant of \Cref{global_LS_KL_new}.

\begin{thm}[Convergence under the KL property, II] \label[theorem]{global_LS_KL}
 Let \ref{A.1}, \ref{B.1}--\ref{B.3}, and \ref{C.1} hold and let $\{x_k\}$ be generated by \Cref{alg:main_LS}. Let us further assume ${\sum}_{k=1}^\infty b_k^{-{1}/{\beta}} < \infty$ and suppose that $x^* \in \mathcal A$ is an accumulation point of $\{x_k\}$ at which assumption \ref{C.2} is satisfied. Then either the algorithm stops after finitely many iterations
at a stationary point or the whole sequence
$\{x_k\}$ converges to $x^*$ with $\lim_{k\to \infty}
\|g_k\| = 0$.
\end{thm}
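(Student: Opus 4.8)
The plan is to run the same argument as in the proof of \Cref{global_LS_KL_new}, replacing the one ingredient that hinged on \ref{A.2} and \ref{B.4} — namely the lower bound on $\lambda_k$ in the $\NPC$ case furnished by \Cref{lemma:stepsize}~(ii) — by the descent estimate \Cref{lemma: LS_NC}~(iii), which is now available because \ref{B.2} holds, at the price of the extra summability assumption $\sum_k b_k^{-1/\beta}<\infty$. As before, we may assume the algorithm does not terminate in finitely many steps; then \Cref{global_LS} gives that $x^*$ is stationary, and since $\{f(x_k)\}$ is non-increasing with $f(x_{k_\ell})\to f(x^*)$ (the degenerate case $f(x_k)=f(x^*)$ forces $\|g_k\|=0$ and finite termination through the descent inequalities below), there is $k_\eta$ with $0<f(x_k)-f(x^*)\le\eta$ for all $k\ge k_\eta$.

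Next I would assemble the unified per-step decrease. Combining \Cref{lemma: LS_SOL}~(iii), \Cref{lemma: LS_NC}~(iii), and \Cref{lemma: LS_GD}~(iii) yields $f(x_k)-f(x_{k+1})\ge\tau_k\sigma\|g_k\|\|s_k\|$ with $\tau_k=C_k\min\{\cA^2,(a_k/s^\alpha)^{2/(1-\alpha)}\|s_k\|^{2\alpha/(1-\alpha)}\}$ when $\mathrm{flag}_k=\mathrm{SOL}$, $\tau_k=1$ when $\mathrm{flag}_k=\mathrm{GD}$, and $\tau_k=\min\{\cB,s\cZ/2,s^{-\beta}b_k\|s_k\|^\beta,(s^{1-\zeta}/2)z_k\|s_k\|^\zeta\}$ when $\mathrm{flag}_k=\NPC$; here $C_k=(\|\bar{B}_k\|+\|\bar{B}_k\|^2)^{-1}\ge\tilde C:=(M+\cZ+(M+\cZ)^2)^{-1}>0$ by \ref{B.1} and \ref{B.3}. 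Applying the KL inequality \eqref{eq:KL} in the form $\|g_k\|^{-1}\le\varrho'(f(x_k)-f(x^*))$ together with the concavity of $\varrho$ then gives the telescoping bound $\tau_k\sigma\|s_k\|\le\varrho_k-\varrho_{k+1}$ whenever $x_k\in V$ and $0<f(x_k)-f(x^*)<\eta$, where $\varrho_k:=\varrho(f(x_k)-f(x^*))$.

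The heart of the proof is then an induction as in \cite{absil2005convergence,jia2023convergence} and \Cref{global_LS_KL_new}. Introduce $\sigma_{1,k}:=\sum_{i\ge k}a_i^{-1/\alpha}$, $\sigma_{2,k}:=\sum_{i\ge k}z_i^{-1/\zeta}$, $\sigma_{3,k}:=\sum_{i\ge k}b_i^{-1/\beta}$, all of which tend to $0$ by \ref{C.1} and the new hypothesis, and pick $k_{\ell_0}\ge k_\eta$ along the convergent subsequence so that $\tilde\varrho:=\varrho_{k_{\ell_0}}$ and the three tails are small enough that a fixed radius $\nu$ — equal to $\|x_{k_{\ell_0}}-x^*\|$ plus a finite sum of terms of the type $\tilde\varrho/\sigma$, $(s\sigma_{1,k_{\ell_0}})^{1-p_1}(\tilde\varrho/(\tilde C\sigma))^{p_1}$, $\sigma_{2,k_{\ell_0}}^{1-p_2}(\tilde\varrho/\sigma)^{p_2}$, $\sigma_{3,k_{\ell_0}}^{1-p_3}(s^\beta\tilde\varrho/\sigma)^{p_3}$ with $p_1=\frac{1-\alpha}{1+\alpha}$, $p_2=\frac{1}{1+\zeta}$, $p_3=\frac{1}{1+\beta}$ — satisfies $B_\nu(x^*)\subset V$. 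One then shows by induction over $k\ge k_{\ell_0}$ that $x_k\in B_\nu(x^*)$ and $\sum_{i=k_{\ell_0}}^k\|s_i\|\le\nu-\|x_{k_{\ell_0}}-x^*\|$. In the inductive step the indices are partitioned: where $\tau_k$ is bounded below by a constant ($\mathrm{flag}_k=\mathrm{GD}$; $\mathrm{flag}_k=\mathrm{SOL}$ with $\cA\le a_k\|g_k\|^\alpha$, using $C_k\ge\tilde C$; or $\mathrm{flag}_k=\NPC$ with $\tau_k\in\{\cB,s\cZ/2\}$) one bounds $\sum\|s_i\|$ directly up to a constant by $\tilde\varrho/\sigma$; where $\mathrm{flag}_k=\mathrm{SOL}$ with $\cA>a_k\|g_k\|^\alpha$ one uses the inverse H\"older inequality with exponent $p_1$ exactly as in \Cref{global_LS_KL_new}; where $\mathrm{flag}_k=\NPC$ with $\tau_k=(s^{1-\zeta}/2)z_k\|s_k\|^\zeta$ one uses H\"older with exponents $1+\zeta$ and $(1+\zeta)/\zeta$ and the tail $\sigma_{2,\cdot}$; and in the genuinely new case $\mathrm{flag}_k=\NPC$ with $\tau_k=s^{-\beta}b_k\|s_k\|^\beta$ one writes $\|s_i\|=(b_i\|s_i\|^{1+\beta})^{1/(1+\beta)}b_i^{-1/(1+\beta)}$ and applies H\"older with exponents $1+\beta$ and $(1+\beta)/\beta$, obtaining $\sum\|s_i\|\le(s^\beta\tilde\varrho/\sigma)^{p_3}\sigma_{3,k_{\ell_0}}^{1-p_3}$. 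Adding the contributions closes the induction; letting $k\to\infty$ shows $\{x_k\}$ is Cauchy, hence $x_k\to x^*$ and $\|g_k\|=\|\nabla f(x_k)\|\to\|\nabla f(x^*)\|=0$ by continuity. The boundary case $\alpha=1$ is handled verbatim as in \Cref{global_LS_KL_new}: when $\mathrm{flag}_k=\mathrm{SOL}$ and $\cA\ge a_k\|g_k\|$, \Cref{lemma: LS_SOL}~(ii) gives $a_k\|d_k\|\le1$, so the corresponding partial sum is controlled by $\sum\lambda_i/a_i\le s\sigma_{1,k_{\ell_0}}$.

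I expect the main obstacle to be purely organizational: the case analysis is heavier here (seven sub-cases versus four) and one must check that the extra H\"older estimate involving $\{b_k\}$ is consistent with the choice of the invariant radius $\nu$, so that $B_\nu(x^*)$ stays inside the KL neighborhood $V$ throughout the induction. Conceptually nothing beyond \Cref{global_LS_KL_new} is required — indeed the $\NPC$ part is slightly simpler here, since \Cref{lemma: LS_NC}~(iii) renders the auxiliary clause ``$\lambda_k\ge\bar\lambda$'' in the induction hypothesis of \Cref{global_LS_KL_new} unnecessary.
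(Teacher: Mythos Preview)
Your proposal is correct and follows essentially the same route as the paper: replace the step-size lower bound from \Cref{lemma:stepsize}~(ii) by the descent estimate \Cref{lemma: LS_NC}~(iii), use the uniform bound $C_k\ge\tilde C$ coming from \ref{B.1}, drop the auxiliary clause $\lambda_k\ge\bar\lambda$ from the induction, and add one more inverse H\"older block for the $b_k\|s_k\|^\beta$ term. The paper merely packages the case split more compactly---collapsing your constant sub-cases into a single $\hat T=\min\{\bar C\cA^2,\cB,s\cZ/2,1\}$ and keeping three variable terms $\hat T_{2,k},\hat T_{3,k},\hat T_{4,k}$---but the content and the mechanism are identical.
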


\begin{proof}
The proof of \Cref{global_LS_KL} is similar to the derivation of \Cref{global_LS_KL_new}. In particular, Lemma \ref{lemma: LS_NC}~(iii) is applicable and the bounds in \Cref{lemma:stepsize} on the step sizes $\{\lambda_k\}$ are no longer required. If $\alpha < 1$ and combining \Cref{lemma: LS_SOL,lemma: LS_NC,lemma: LS_GD}~(iii), the key estimate \eqref{eq:KL_allnew} now takes the form
    \begin{equation*}\label{eq:KL_all}
    f(x_k) -f(x_{k+1}) \ge \sigma \|g_k\| \|s_k\| \min\{ \hat T, \hat T_{2,k},\hat T_{3,k},\hat T_{4,k} \},
    \end{equation*}
    where $\hat T := \min\{\bar C\cA^2,\cB, \frac{s\cZ}{2}, 1\}$, $\hat T_{2,k} := \bar C (a_k/s^{\alpha})^\frac{2}{1-\alpha}\|s_k\|^{\frac{2\alpha}{1-\alpha}}$, $\hat T_{3,k} := \frac{b_k}{s^\beta} \|s_k\|^{\beta}$, and $\hat T_{4,k} := \frac{s^{1-\zeta}}{2}z_k\|s_k\|^{\zeta}$. Here, $\bar C > 0$ is a (uniform) lower bound for $C_k$, i.e., we have $C_k = (\|\bar B_k\|+\|\bar B_k\|^2)^{-1} \geq \bar C$ for all $k \in \mathbb{N}$. (Such $\bar C$ exists due to \ref{B.1} and $\zeta_k \le \cZ$). The remaining steps are now essentially identical to the proof of \Cref{global_LS_KL_new}. We will omit further details here.
\end{proof}

Finally, we present a technical result on the rate of convergence of $\{\|g_k\|\}$ which is based on the {\L}ojasiewicz condition \ref{C.3}. \Cref{global_LS_KL_rate} will be used in the next section to establish saddle point avoidance properties of \Cref{alg:main_LS}.

\begin{lem}[Convergence rates]\label[lemma]{global_LS_KL_rate}
Let \ref{A.1}--\ref{A.2}, \ref{B.4}, \ref{B.3}, and \ref{C.1} be satisfied and let $\{x_k\}$ be generated by \Cref{alg:main_LS}. Suppose that $x^* \in \mathcal A$ is an accumulation point of $\{x_k\}$ at which property \ref{C.3} holds with exponent $\theta \in [0,1)$. Setting $\omega = \frac{(1+\max\{2\alpha, \zeta\})\theta}{1-\theta}$, we then have ${\liminf}_{k\to\infty}k^{1+p} \|g_k\|=0$ for all $p \in (0,\frac{1}{\omega-1})$.
\end{lem}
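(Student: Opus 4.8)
I would prove this by first extracting a polynomial decay rate for the function values from the {\L}ojasiewicz inequality, and then bootstrapping it to the claimed $\liminf$–statement for $\{\|g_k\|\}$ via a summation argument that exploits condition \ref{C.1}. By \Cref{global_LS_KL_new} (which applies here), we may assume the method does not terminate finitely, so $x_k \to x^*$, $\|g_k\| \to 0$, and for all large $k$ the iterates lie in a fixed ball around $x^*$ on which \ref{A.2} and the {\L}ojasiewicz inequality hold. Write $r_k := f(x_k) - f(x^*) \searrow 0$ and $\varrho_k := \varrho(r_k)$; since $\varrho(t) = c\,t^{1-\theta}$ by \ref{C.3}, \eqref{eq:KL} becomes $\|g_k\| \ge \tilde c\, r_k^{\theta}$ for some $\tilde c>0$ and all large $k$. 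Throughout set $m := \max\{2\alpha,\zeta\}$; note $\omega - 1 = \frac{(2+m)\theta-1}{1-\theta}$, so if $\omega \le 1$ (i.e. $(2+m)\theta \le 1$) the interval $(0,\tfrac{1}{\omega-1})$ is empty and there is nothing to prove. So assume $\omega>1$.

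\emph{Step 1 (function-value rate).} Revisiting the per-iteration decrease \eqref{eq:KL_allnew} together with the step-size lower bounds of \Cref{lemma:stepsize} and the direction-norm identities in \Cref{lemma: LS_SOL,lemma: LS_NC,lemma: LS_GD}~(ii), I would show that for all large $k$ one has $r_k - r_{k+1} \ge c_1 \|g_k\|^{2+m}$ for some $c_1>0$; the exponent $2+m$ comes from the curvature-test threshold $a_k\|g_k\|^{\alpha}$ in the SOL case and from $\zeta_k \ge \bar z\|g_k\|^{\zeta}$ (see \ref{B.3}) in the NPC case, while GD contributes the better exponent $2$. Substituting $\|g_k\|\ge \tilde c\, r_k^{\theta}$ yields the scalar recursion $r_k - r_{k+1} \ge c_1\tilde c^{\,2+m}\, r_k^{(2+m)\theta}$ with $(2+m)\theta>1$, and the standard argument gives $r_k = \mathcal O\!\big(k^{-1/((2+m)\theta-1)}\big)$; hence, using $\varrho_k = c\,r_k^{1-\theta}$ and the identity $\frac{1-\theta}{(2+m)\theta-1} = \frac{1}{\omega-1}$, we obtain $\varrho_k = \mathcal O\!\big(k^{-1/(\omega-1)}\big)$. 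In particular, an Abel-summation of the telescoping sum $\sum_k(\varrho_k-\varrho_{k+1}) = \varrho_{k_{\ell_0}}<\infty$ against the weight $k^a$ gives $\sum_k k^a(\varrho_k - \varrho_{k+1}) < \infty$ for every $a < \tfrac{1}{\omega-1}$.

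\emph{Step 2 (from $\varrho$ to $\|g_k\|$).} I would split the (large) indices into \emph{saturated} ones — $\mathrm{flag}_k \in\{\mathrm{GD},\mathrm{SOL}\}$ with curvature threshold equal to $\cA$, or $\mathrm{flag}_k = \mathrm{NPC}$ with $\zeta_k = \cZ$ — and the rest. For saturated $k$ the step size is bounded below by a constant, so the estimates underlying \eqref{eq:KL_allnew} improve to $\varrho_k - \varrho_{k+1} \ge c_2\|g_k\|$ (no extra power of $\|g_k\|$); combined with $\sum_k k^a(\varrho_k-\varrho_{k+1})<\infty$ for $a<\tfrac1{\omega-1}$ this yields $\sum_{k\ \mathrm{sat.}} k^a\|g_k\| < \infty$, hence along the saturated indices $\liminf k^{1+p}\|g_k\| = 0$ for every $p<\tfrac1{\omega-1}$. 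For the non-saturated indices one instead has $\|g_k\| < (\cA/a_k)^{1/\alpha}$ or $\|g_k\| < (\cZ/z_k)^{1/\zeta}$, so by \ref{C.1}, $\sum_{k\ \mathrm{non\text{-}sat.}}\|g_k\| \le \cA^{1/\alpha}\sum_k a_k^{-1/\alpha} + \cZ^{1/\zeta}\sum_k z_k^{-1/\zeta} < \infty$ (and more refined weighted sums follow from the inverse-H\"older device used in \Cref{global_LS_KL_new}, applied with weights $a_k$, $z_k$ against $\sum a_k^{-1/\alpha}$, $\sum z_k^{-1/\zeta}$). Arguing by contradiction — supposing $\|g_k\|\ge c\,k^{-(1+p)}$ for all large $k$ and some $p\in(0,\tfrac1{\omega-1})$ — the saturated indices inject a lower bound $c_2 c\sum_{k\ \mathrm{sat.},\,k\ge K'}k^{-(1+p)}$ into $\varrho_{K'} = \mathcal O(K'^{-1/(\omega-1)})$, while the non-saturated ones force $a_k^{-1/\alpha}$ or $z_k^{-1/\zeta}$ to exceed a multiple of $k^{-(1+p)}$ and are thus absorbed by the convergent tails of \ref{C.1}; comparing exponents in $\Theta(K'^{-p}) = \mathcal O(K'^{-1/(\omega-1)}) + (\text{tail of a convergent series})$ gives the contradiction whenever $p<\tfrac1{\omega-1}$.

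The step I expect to be the real obstacle is precisely the handling of the non-saturated iterations in Step 2: one must make sure that the ``deficit'' they leave in $\varrho$ (relative to the clean bound $\varrho_k-\varrho_{k+1}\ge c_2\|g_k\|$) is controlled by the tails $\sum a_k^{-1/\alpha}$, $\sum z_k^{-1/\zeta}$ rather than by the weaker $\mathcal O(k^{-1/(\omega-1)})$ decay of $\varrho_k$ — this is where the exact form of \ref{C.1} (summability of $a_k^{-1/\alpha}$ and $z_k^{-1/\zeta}$, not merely of $a_k^{-1}$, $z_k^{-1}$) is essential, and where a careful partition into the four index sets $\mathcal K_1,\dots,\mathcal K_4$ of \Cref{global_LS_KL_new}, each with its own Young/H\"older bound, seems unavoidable. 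A minor technical point is that the step-size bound in \Cref{lemma:stepsize}~(ii) and the forward-linesearch control are only valid once $x_k,x_{k+1}$ lie in a fixed ball around $x^*$, which holds for large $k$ because $x_k\to x^*$ and $\|s_k\|\to 0$; and the special case $\alpha = 1$ in Step 1 should be treated as in the proof of \Cref{global_LS_KL_new} (using \Cref{lemma: LS_SOL}~(ii) in place of (iii)).
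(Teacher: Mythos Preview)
Your Step~1 is correct and does establish $\varrho_k = \mathcal O(k^{-1/(\omega-1)})$: dropping the $a_k,z_k$ factors via $a_k\geq\bar a$, $z_k\geq\bar z$ gives $r_k-r_{k+1}\geq c_1\|g_k\|^{2+m}\geq c_1'r_k^{(2+m)\theta}$, and the stated rate for $r_k$ (hence for $\varrho_k = c\,r_k^{1-\theta}$) follows. Step~2, however, has a genuine gap. In your final contradiction you arrive at an inequality of the form $\Theta(K'^{-p}) \leq \mathcal O\!\big(K'^{-1/(\omega-1)}\big) + C\big(\sigma_{1,K'}+\sigma_{2,K'}\big)$, where $\sigma_{i,K'}$ are the tails of the series in \ref{C.1}. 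For this to be a contradiction you need $\sigma_{i,K'}=o(K'^{-p})$, but \ref{C.1} only guarantees $\sigma_{i,K'}\to 0$ at an \emph{unspecified} rate; nothing prevents, say, $\sigma_{1,K'}\sim K'^{-\epsilon}$ with $0<\epsilon<p$, in which case the inequality is consistent for all $K'$ and no contradiction arises. The root cause is that in Step~1 you already threw away the $a_k,z_k$ factors, so they are no longer available to bridge from the rate on $\varrho_k$ to one on $\|g_k\|$.

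The paper avoids this by never passing through a recursion on $r_k$. It keeps the factors $a_k^2,z_k$ in the per-step descent estimate $(f(x_k)-f(x_{k+1}))/\|g_k\|\ge\bar G\,\|g_k\|\min\{1,\,a_k^2\|g_k\|^{2\alpha},\,z_k\|g_k\|^{\zeta}\}$, sums from $k$ to $\infty$, and applies the inverse H\"older inequality on each index set where a different minimizer is active. This directly yields $\Gamma_k:=\sum_{i\geq k}\|g_i\|\le C\,\varrho_k^{1/(1+m)}$; only \emph{boundedness} of $\sigma_{i,k}$ is used here, not their decay rate. Coupling with the {\L}ojasiewicz bound $\varrho_k\le C\|g_k\|^{(1-\theta)/\theta}=C(\Gamma_k-\Gamma_{k+1})^{(1-\theta)/\theta}$ then gives the self-contained recursion $\Gamma_k^{-\omega}(\Gamma_k-\Gamma_{k+1})\geq c$, whence $\Gamma_k=\mathcal O(k^{-1/(\omega-1)})$. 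The $\liminf$ claim is then immediate: if $\|g_k\|\geq c\,k^{-(1+p)}$ for all large $k$, then $\Gamma_k\geq c\sum_{i\geq k}i^{-(1+p)}$, which is of order $k^{-p}$, contradicting $p<1/(\omega-1)$. So the crucial object is $\Gamma_k$, not $r_k$ or $\varrho_k$; your instinct to invoke inverse H\"older and a partition into index sets is right, but it must be applied to bound $\Gamma_k$ in terms of $\varrho_k$ \emph{before} any recursion is run, not afterwards to rescue a rate derived for $\varrho_k$ alone.
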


The proof of \Cref{global_LS_KL_rate} is relegated to \Cref{app:lem-rate}.

\subsection{Avoiding Strict Saddle Points}\label{ssec: LS3}

In this section, our goal is to prove that \Cref{alg:main_LS} can effectively avoid strict saddle points when we opt to use Hessian information, $B_k = \nabla^2 f(x_k)$. More specifically, we aim to show that the iterates generated by \Cref{alg:main_LS} can only converge to a second-order critical point $x^*$ satisfying $\nabla f(x^*) = 0$ and $\nabla^2 f(x^*) \succeq 0$.
We consider the following additional assumptions.

\begin{mdframed}[style=assumptionbox]
    \begin{enumerate}[label=\textup{\textrm{(D.\arabic*)}},topsep=2pt,itemsep=0ex,partopsep=0ex,leftmargin=6ex]
        \item \label{D.1} Defining $\mathcal N := \{k : \lambda_{\min}(\bar B_k) \leq 0\}$ and recalling $\mathcal N_{\mathrm{npc}} \equiv \{k \in \mathcal N: \mathrm{flag}_k = \NPC\}$, we assume $|\mathcal N| = \infty$ $\implies$ $|\mathcal N_{\mathrm{npc}} | = \infty$.
        \item \label{D.2} 
        We have $\zeta_k = \min\{\cZ,z_k\|g_k\|^\zeta\}$ where $\cZ > 0$, $\zeta \in (0,\frac13]$, $z_k \geq \bar z > 0$, and $z_k = \mathcal O((k\log(k)^2)^{\zeta})$, $k\to\infty$, and ${\sum}_{k=1}^\infty a_k^{-{1}/{\alpha}} < \infty$.
      \end{enumerate} 
\end{mdframed}

Under \ref{D.2}, the assumptions \ref{B.3} and \ref{C.1}, used in the previous sections, are naturally satisfied. We now present our main avoidance result. 

\begin{thm}[Saddle point avoidance] \label[theorem]{Avoid_LS}
Let \ref{A.1}--\ref{A.2}, \ref{B.4}, and \ref{D.1}--\ref{D.2} hold and let $\{x_k\}$ be generated by \Cref{alg:main_LS}. Suppose that the algorithm does not stop after finitely many iterations. Let $x^* \in \mathcal A$ be an accumulation point of $\{x_k\}$ at which property \ref{C.3} is satisfied. Then, $\{x_k\}$ converges to $x^*$ and we have $\nabla f(x^*) = 0$ and $\nabla^2 f(x^*) \succeq 0$.
\end{thm}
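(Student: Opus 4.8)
The plan is to treat the two assertions separately. The convergence $x_k\to x^*$ together with $\nabla f(x^*)=0$ will follow from the KL analysis already in place: \ref{C.3} implies \ref{C.2} (via $\varrho(t)=ct^{1-\theta}$) and \ref{D.2} implies \ref{B.3} and \ref{C.1}, so \Cref{global_LS_KL_new} applies, and since the algorithm does not stop we get $x_k\to x^*$, $\|g_k\|\to 0$, hence $\nabla f(x^*)=0$. Along the way I would record the facts needed later: $\Delta_k := f(x_k)-f(x^*)$ is strictly decreasing to $0$ (strict descent along \eqref{armijo}/\eqref{armijo2}, using $\|g_k\|\neq 0$), so $\Delta_k>0$ for all $k$; and, fixing $r>0$ with $B_{2r}(x^*)\subset V_H$ and Lipschitz moduli $L_g,L_H$ of $\nabla f$, $\nabla^2 f$ on $B_{2r}(x^*)$, for all large $k$ one has $x_k\in B_r(x^*)\cap V$, $\|s_k\|\to 0$, $x_{k+1}\in B_{2r}(x^*)$, and $0<\Delta_k<\eta$.

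For the second-order claim I would argue by contradiction, assuming $\lambda_{\min}(\nabla^2 f(x^*))=-2\mu<0$. The first task is to show that $\NPC$ is detected infinitely often. By continuity of $\nabla^2 f$, $\lambda_{\min}(\nabla^2 f(x_k))<-\mu$ for all large $k$; and, since $\|g_k\|$ tends to zero fast enough by \Cref{global_LS_KL_rate} (using $z_k=\mathcal O((k\log(k)^2)^\zeta)$), we get $\liminf_k\zeta_k\le\liminf_k z_k\|g_k\|^\zeta=0$. Hence $\zeta_k<\mu$, and therefore $\lambda_{\min}(\bar B_k)=\lambda_{\min}(\nabla^2 f(x_k))+\zeta_k<0$, for infinitely many $k$; that is, $|\mathcal N|=\infty$, and \ref{D.1} upgrades this to $|\mathcal N_{\mathrm{npc}}|=\infty$.

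The heart of the argument is then a decrease estimate that is ``too good'' at $\NPC$ steps. For a large $k\in\mathcal N_{\mathrm{npc}}$ we have $d_k=\tfrac{\|g_k\|}{\|r_{t-1}\|}r_{t-1}$ with $\|d_k\|=\|g_k\|$ and $d_k^\top B_k d_k\le-\zeta_k\|g_k\|^2\le 0$ (since $r_{t-1}$ is an $\NPC$ direction for $\bar B_k$). The forward linesearch returns $\lambda_k$ satisfying \eqref{armijo2}, and \Cref{lemma:stepsize}~(ii) (applicable as $x_k,x_{k+1}\in B_{2r}(x^*)$ and $B_k=\nabla^2 f(x_k)$) gives $\lambda_k\ge 3\rho(1-\sigma)\zeta_k/(L_H\|g_k\|)$, so $\|s_k\|=\lambda_k\|g_k\|\gtrsim\zeta_k$. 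Feeding this back into \eqref{armijo2} yields $f(x_k)-f(x_{k+1})\ge\tfrac{\sigma}{2}\zeta_k\|s_k\|^2\gtrsim\zeta_k^3\gtrsim\|g_k\|^{3\zeta}$, where in the last step $\zeta_k\ge\bar z\|g_k\|^\zeta$ for $k$ large. Invoking \ref{C.3} in the gradient form $\|g_k\|\gtrsim\Delta_k^{\theta}$, one obtains $\Delta_k-\Delta_{k+1}\gtrsim\Delta_k^{3\zeta\theta}$ for all large $k\in\mathcal N_{\mathrm{npc}}$. Since $\zeta\le\tfrac13$ and $\theta<1$ we have $3\zeta\theta<1$, so $\Delta_k^{1-3\zeta\theta}\to 0$, and choosing $k\in\mathcal N_{\mathrm{npc}}$ large enough that the hidden constant dominates $\Delta_k^{1-3\zeta\theta}$ forces $\Delta_{k+1}<0$, contradicting $\Delta_{k+1}>0$. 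Hence $\lambda_{\min}(\nabla^2 f(x^*))\ge 0$, i.e.\ $\nabla^2 f(x^*)\succeq 0$.

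The hard part is the chain in the previous paragraph: one has to see that it is the \emph{forward} linesearch — not merely the regularization — that forces a step $\|s_k\|$ of order $\zeta_k$ and hence a per-iteration decrease of order $\zeta_k^3\sim\|g_k\|^{3\zeta}$ at $\NPC$ steps, and then that the restriction $\zeta\le\tfrac13$ in \ref{D.2} is calibrated precisely so that $3\zeta\theta<1$ for \emph{every} admissible {\L}ojasiewicz exponent $\theta\in[0,1)$, making this decrease incompatible with $\Delta_k\searrow 0$. A secondary but necessary point is the use of \Cref{global_LS_KL_rate} to drive $\zeta_k$ below $\mu$ along a subsequence; without it one could not rule out that $\bar B_k$ stays positive definite (for a too-large fixed $\cZ$) and that $\NPC$ is never triggered near the strict saddle.
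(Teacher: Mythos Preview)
Your proof is correct and follows essentially the same route as the paper: reduce to $|\mathcal N_{\mathrm{npc}}|=\infty$ via \Cref{global_LS_KL_rate} and \ref{D.1}, use \Cref{lemma:stepsize}\,(ii) and the forward linesearch to force $\lambda_k\gtrsim\zeta_k/\|g_k\|$, and extract a per-step decrease of order $\zeta_k^3$ at $\NPC$ iterations.

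The only genuine difference is the endgame. The paper obtains a contradiction by a \emph{summation} argument: it bounds ${\sum}_{k\in\mathcal K}(f(x_k)-f(x_{k+1}))/\|g_k\|$ above by the finite KL quantity $\varrho_{\bar k}/\sigma$ and below by a constant times ${\sum}_{k\in\mathcal K} z_k^3/\|g_k\|^{1-3\zeta}$, which diverges since $\zeta\le\tfrac13$ and $z_k\to\infty$. You instead use a \emph{pointwise} argument: combining $f(x_k)-f(x_{k+1})\gtrsim\|g_k\|^{3\zeta}$ with the {\L}ojasiewicz inequality $\|g_k\|\gtrsim\Delta_k^\theta$ gives $\Delta_k-\Delta_{k+1}\gtrsim\Delta_k^{3\zeta\theta}$, and since $3\zeta\theta<1$ this forces $\Delta_{k+1}<0$ for some large $k\in\mathcal N_{\mathrm{npc}}$. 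Your version is a bit more elementary (no sum, no desingularized telescoping) and makes the role of the exponent constraint $\zeta\le\tfrac13$ especially transparent; the paper's version keeps the argument in the KL-summation framework used throughout \Cref{ssec: KL} and additionally exploits $z_k\to\infty$, which you do not need. Both are valid.
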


\begin{proof}
By \Cref{global_LS_KL_new}, we have $x_k\to x^*$ and $\|g_k\| \to 0$, and without loss of generality, we assume $\{x_k\} \subset B_{2r}(x^*) \subset V_H$, $r > 0$. Let $L_H$ further denote the Lipschitz constant of $\nabla^2 f$ on $B_{2r}(x^*)$. By Lemma~\ref{global_LS_KL_rate} and \ref{D.2}, we have
$$ 0\le\liminf_{k\to\infty} \zeta_k \le \liminf_{k \to \infty} z_k \|g_k\|^\zeta \le \limsup_{k\to\infty}\frac{M\log(k)^{2\zeta}}{k^{p\zeta}}\liminf_{k \to \infty} (k^{1+p} \|g_k\|)^\zeta = 0,$$
for some $M > 0$ and $p \in (0,\frac{1}{\omega-1})$. We now assume that $\nabla^2 f(x^*)$ is indefinite. This implies $\liminf_{k \to \infty}\lambda_{\min}(\bar B_k) = \liminf_{k \to \infty} \lambda_{\min}(\nabla^2 f(x_k)+ \zeta_k I) <0$ and by \ref{D.1}, we can infer $|\mathcal N| = \infty$ and $|\mathcal N_{\mathrm{npc}} | = \infty$. Without loss of generality, we assume $\lim_{\mathcal N_{\mathrm{npc}} \ni k \to \infty} \zeta_k = 0$. Using \ref{D.2} and \Cref{lemma: LS_NC} (ii), this yields
\[ \frac{3\rho(\sigma-1)d_k^\top B_kd_k}{L_H \|d_k\|^3} \geq  \frac{3\rho(1-\sigma) \zeta_k }{ L_H \|d_k\|} = \frac{3\rho(1-\sigma)}{L_H}\frac{z_k}{\|g_k\|^{1-\zeta}} \to \infty \]
as $\mathcal N_{\mathrm{npc}} \ni k \to \infty$. Hence, according to \Cref{lemma:stepsize}~(ii), there is $\bar k \in \mathbb N$ such that $\lambda_k \ge \frac{ 3\rho(\sigma-1) d_k^{\top} B_k d_k }{ L_H \|d_k\|^3} >  s$ for all $\mathcal N_{\mathrm{npc}} \ni k \geq \bar k$. Setting $\mathcal K := \{k \in \mathcal N_{\mathrm{npc}}: k \geq \bar k\}$ and using \ref{D.2}, \eqref{armijo2}, and \eqref{eq:sum1-new} (after potentially increasing $\bar k$), this implies
\begin{align*}
\infty > \varrho_{\bar k} & > \sum_{k \in \mathcal K} \frac{f(x_k) -f(x_{k+1})}{\sigma\|g_k\|} \ge \sum_{k \in \mathcal K} - \frac{\lambda_k  d_k^{\top} g_k + \frac{1}{2} \lambda_k^2  d_k^{\top} B_k d_k}{\|g_k\|}  \\
& \ge  \sum_{k \in \mathcal K} - \frac{\lambda_k^2  d_k^{\top} B_k d_k}{2\|g_k\|}  \ge \sum_{k \in \mathcal K} \frac{9\rho^2(\sigma-1)^2  (-d_k^{\top} B_k d_k)^3 }{2 L_H^2 \|d_k\|^6 \|g_k\|} \\
&\ge  \sum_{k \in \mathcal K}  \frac{4\rho^2(\sigma-1)^2}{ L_H^2} \frac{\zeta_k^3}{\|g_k\|} = \frac{4\rho^2(\sigma-1)^2}{L_H^2} \sum_{k \in \mathcal K} \frac{z_k^3}{\|g_k\|^{1-3\zeta}} =\infty,
\end{align*}
where $\varrho_{\bar k} = c(f(x_{\bar k})-f(x^*))^{1-\theta}$, cf. \ref{C.3}. This is a contradiction.
\end{proof}

\begin{rem}
Our analysis demonstrates that strict saddle point avoidance does not require the direct use of $\NPC$ directions associated with $\lambda_{min}(B_k)$; instead, a pseudo-negative eigenvalue $-\zeta_k$ suffices. This eigenpair-free mechanism reduces computational costs and can enhance practical performance. The key insight in our derivation is that if $x_k$ approaches a strict saddle point $x^*$, the condition \eqref{armijo2} and the forward linesearch forces the step size to increase in the order of $\sim1/\|g_k\|^{1-\zeta}$ as $\|g_k\| \to 0$. Coupled with \eqref{armijo2}, this implies that the terms $(f(x^k)-f(x^{k+1}))/\|g_k\|$, $k\in\mathcal N_{\mathrm{npc}}$, diverge and are not summable. However, this contradicts the KL-based guarantees shown in \Cref{global_LS_KL_new}. Our arguments are motivated by the analyses in \cite{gould2000exploiting,curtis2019exploiting,liu2022newton}, but we take additional advantage of the geometric properties provided by \ref{C.2}--\ref{C.3}. 
\end{rem}

\begin{rem}\label{rem:theta=0}
Condition \ref{D.1} means that if the matrix $\bar B_k$ contains $\NPC$ information for infinitely many iterations, then $\MR$ can detect such $\NPC$ information---at least on an infinite subsequence. We note that by setting $b_k=(k \log(k)^2)^{\beta}$ in $\theta_k$ and mimicking the derivations for $\zeta_k$ in the proof of \Cref{Avoid_LS}, it follows $\liminf_{k \to \infty} \theta_k =0$ and hence, the $\MR$-subproblems are being solved with increasing accuracy. In this scenario, \ref{D.1} can be ensured under reasonable assumptions. For instance, if the orthogonal projection of $g_k$ onto the eigenspace spanned by the non-positive eigenvectors of $\bar B_k$ is non-trivial, then we must have $\lambda_{\min}(T_k) = \lambda_{\min}(\bar B_k) \leq 0 $ at the last inner iteration of $\MR$ by \cite[eqn. 4.9]{paige1971computation}, which guarantees $\NPC$ detection. 
Moreover, \ref{D.1} holds with probability $1$ when setting $\theta_k =0$, $k \in \mathbb N$. In this case, if $\bar B_{k}$ is not positive definite, the dimension of the space spanned by the associated eigenvectors w.r.t. the positive eigenvalues is strictly less than $n$. Hence, due to $\theta_k = 0$ and with probability $1$, $\MR$ will return an $\NPC$ direction and will not stop early. Similar probabilistic arguments are also used in, e.g., \cite[Section 4]{gould2000exploiting}. Note that if the linear system is inconsistent, $\MR$ will output the nonzero residual vector $r_{g-1}$ as a zero curvature direction, cf. \cite[Lemma 3.4 (iii)]{liu2022minres}, where
$g$ is the grade of the system.
\end{rem}

\subsection{Fast Local Convergence to Non-Isolated Local Minima} \label{ssec: LS4}
As illustrated in the previous subsections, the analysis techniques based on the Kurdyka-{\L}ojasiewicz inequality allow us to ensure convergence of the iterates $x_k \to x^*$. We will now discuss finer local convergence properties under the assumption that the (stronger) {\L}ojasiewicz inequality \ref{C.3} holds at $x^*$, i.e., there is a neighborhood $V$ of $x^*$, $\theta \in (0,1)$, and $\eta, c > 0$ such that
\begin{equation} \label{eq:add:loja} [ f(x) - f(x^*) ]^{2\theta} \leq (2\mu)^{-1} \|\nabla f(x)\|^2, \end{equation}
for all $x \in V\cap \{x: 0 < f(x) - f(x^*) < \eta\}$, where $\mu:= 1/(2c^2(1-\theta)^2)$. Moreover, we are interested in scenarios where $x^*$ is a (potentially non-isolated) local minimum of the objective function $f$. To this end, let us define 
\[ \mathcal S := \{x \in \bR^n: \; \text{$x$ is a local minimum and} \; f(x) = f(x^*)\}. \] 
The {\L}ojasiewicz inequality \eqref{eq:add:loja} is known to imply the quadratic growth condition
\begin{equation} \label{eq:add:qg} f(x) - f(x^*) \geq [(1-\theta)\sqrt{2\mu}]^{\frac{1}{1-\theta}} \dist(x,\mathcal S)^{\frac{1}{1-\theta}} \quad \forall~x \in V^\prime, \end{equation}
where $V^\prime$ is a suitable neighborhood of $x^* \in \mathcal S$, cf.
\cite[Proposition 2.2]{rebjock2024fast1}. Combining \eqref{eq:add:loja} and \eqref{eq:add:qg}, the following error bound condition must hold in a neighborhood of the local minimum $x^* \in \mathcal S$:
\begin{equation}\label{eq:eb}
    (1-\theta)(2\mu)^{\frac{1}{2\theta}} \dist(x,\mathcal S) \leq \|\nabla f(x)\|^{\frac{1-\theta}{\theta}}.
\end{equation}
We refer to \cite{drusvyatskiy2021nonsmooth,bai2022errorbound,rebjock2024fast1} for more discussion on the conditions \eqref{eq:add:loja}, \eqref{eq:add:qg}, and \eqref{eq:eb}.
Based on these specialized conditions, we now show that fast local convergence can be achieved with rates that match the results in \cite{rebjock2024fast2}. Our proof strategy is based on the proper choice of the regularization parameters $\{\zeta_k\}$. In particular, the tolerances $\{\theta_k\}$ and the inherent properties of $\MR$ play a less crucial role---which is in contrast to the overall derivations in \cite{rebjock2024fast2}. 
In the following lemma, we first prove that, in a local region of $x^*$, the linear system \eqref{eq:subreg} is consistent and $\MR$ will always return $\mathrm{SOL}$-directions. This also allows us to strengthen some of the previous results in \Cref{lemma: LS_SOL}.

\begin{lem}\label{lemma:local1}
Let \ref{A.1}--\ref{A.2}, \ref{B.4}, \ref{B.3}, and \ref{C.1} be satisfied and let $\{x_k\}$ be generated by \Cref{alg:main_LS}. Suppose that $x^* \in \mathcal A \cap \mathcal S$ is an accumulation point of $\{x_k\}$ at which property \ref{C.3} holds with exponent $\theta \in (0,1)$. We further assume that $\zeta \leq \frac{1-\theta}{\theta}$, $\zeta_k \ge 2\min\{\cA,a_k\|g_k\|^\alpha\}$ for all $k$, and $\theta_k = \mathcal O(\zeta_k)$, $k \to \infty$. There then exists $\bar k \in \mathbb N$ such that:
\begin{enumerate}[label=\textup{(\roman*)},topsep=1ex,itemsep=0ex,partopsep=0ex, leftmargin = 25pt]
\item All directions $d_k$, $k \geq \bar k$, returned by $\MR$ are $\mathrm{SOL}$-directions. 
\item It holds that $-d_k^{\top}g_k \geq \|g_k\|^2 /M_1$, $\|d_k\| \le M_2 \max\{\|g_k\|, \|g_k\|^{\frac{1-\theta}{\theta}}\}$, and $\bar B_k \succeq \frac12\zeta_kI$ for all $k\geq \bar k$ and some constants $M_1, M_2>0$.
\end{enumerate}  
\end{lem}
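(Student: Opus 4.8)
The plan is to bootstrap from the global-to-local result already established. Under the present hypotheses all assumptions of Theorem~\ref{global_LS_KL_new} hold (since \ref{C.3} implies \ref{C.2}), so either the algorithm terminates after finitely many steps --- in which case the assertions are vacuous for $\bar k$ past the stopping index --- or $x_k\to x^*$ and $\|g_k\|\to0$; we treat the latter. Fix $r>0$ with $B_{2r}(x^*)\subset V_H$ and let $L_g,L_H$ be the Lipschitz moduli of $\nabla f,\nabla^2f$ on $B_{2r}(x^*)$; then $x_k\in B_{2r}(x^*)$ and $x_k$ lies in the {\L}ojasiewicz neighborhood with $0<f(x_k)-f(x^*)<\eta$ for all $k$ large. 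Two elementary facts will be used: $\nabla^2f(y)\succeq0$ for every $y\in\mathcal S$ (second-order necessity at a local minimum), and $\theta\ge\tfrac12$, because for a function with locally Lipschitz gradient the descent lemma gives $f(x)-f(x^*)\ge\tfrac1{2L_g}\|\nabla f(x)\|^2$ near $x^*$, which is incompatible with the {\L}ojasiewicz inequality when $\theta<\tfrac12$. In particular $\tfrac{1-\theta}{\theta}\in(0,1]$, so $\max\{\|g_k\|,\|g_k\|^{(1-\theta)/\theta}\}=\|g_k\|^{(1-\theta)/\theta}$ for $k$ large.

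\textbf{Well-conditioning of $\bar B_k$.} The core estimate is $\bar B_k=\nabla^2f(x_k)+\zeta_kI\succeq\tfrac12\zeta_kI$ for $k$ large. Pick $y_k\in\mathcal S$ with $\|x_k-y_k\|\le2\dist(x_k,\mathcal S)$; for $k$ large $y_k\in B_{2r}(x^*)$, and $\nabla^2f(y_k)\succeq0$ together with the $L_H$-Lipschitz continuity of $\nabla^2f$ give $\lambda_{\min}(\nabla^2f(x_k))\ge-2L_H\dist(x_k,\mathcal S)$. The error bound \eqref{eq:eb} yields $\dist(x_k,\mathcal S)=\mathcal O(\|g_k\|^{(1-\theta)/\theta})=\mathcal O(\|g_k\|^{\zeta})$ (using $\zeta\le\tfrac{1-\theta}{\theta}$, $\|g_k\|\le1$). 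On the other hand $\zeta_k=\min\{\cZ,z_k\|g_k\|^{\zeta}\}$: if $\zeta_k=\cZ$ it dominates $4L_H\dist(x_k,\mathcal S)\to0$, and otherwise $\zeta_k=z_k\|g_k\|^{\zeta}$ with $z_k\to\infty$ (forced by $\sum_kz_k^{-1/\zeta}<\infty$ in \ref{C.1}), so $\zeta_k\ge4L_H\dist(x_k,\mathcal S)$ for $k$ large. Either way $\nabla^2f(x_k)\succeq-\tfrac12\zeta_kI$, which gives the third assertion of (ii) and $\bar B_k\succ0$.

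\textbf{Part (i) and the norm bound.} Since $\bar B_k\succ0$, by \eqref{eq:rAr} $\MR$ can never declare $\NPC$ --- a detection at step $t$ would force $r_{t-1}^{\top}\bar B_kr_{t-1}\le0$ with $r_{t-1}\neq0$, while $r_{t-1}=0$ means $\mathrm{SOL}$ was already returned --- so $\MR$ outputs $d_k=p_t$ with flag $\mathrm{SOL}$, and the curvature test in Step~8 of Algorithm~\ref{alg:main_LS} does not replace it, because $d_k^{\top}\bar B_kd_k\ge\tfrac12\zeta_k\|d_k\|^2\ge\min\{\cA,a_k\|g_k\|^{\alpha}\}\|d_k\|^2$ by $\zeta_k\ge2\min\{\cA,a_k\|g_k\|^{\alpha}\}$; this proves (i). For the norm bound, write $d_k=p_t=-\bar B_k^{-1}(g_k+r_t)$ with $\|r_t\|\le\theta_k\|g_k\|$ and $g_k=\tilde H_k(x_k-y_k)$, $\tilde H_k:=\int_0^1\nabla^2f(x_k+s(y_k-x_k))\,ds$ (valid since $\nabla f(y_k)=0$). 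Then $\bar B_k^{-1}g_k=(x_k-y_k)+\bar B_k^{-1}(\tilde H_k-\bar B_k)(x_k-y_k)$, and since $\|\tilde H_k-\bar B_k\|\le L_H\|x_k-y_k\|+\zeta_k$ and $\|\bar B_k^{-1}\|\le2/\zeta_k$,
\[ \|\bar B_k^{-1}g_k\|\ \lesssim\ \dist(x_k,\mathcal S)+\dist(x_k,\mathcal S)^2/\zeta_k\ \lesssim\ \|g_k\|^{(1-\theta)/\theta}, \]
where the last step uses \eqref{eq:eb}, $\zeta_k\gtrsim\|g_k\|^{\zeta}$, and $\zeta\le\tfrac{1-\theta}{\theta}$. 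Hence $\|d_k\|\le\|\bar B_k^{-1}g_k\|+(2\theta_k/\zeta_k)\|g_k\|\lesssim\|g_k\|^{(1-\theta)/\theta}+\|g_k\|\le M_2\max\{\|g_k\|,\|g_k\|^{(1-\theta)/\theta}\}$, using $\theta_k=\mathcal O(\zeta_k)$.

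\textbf{The descent bound and the main obstacle.} Finally, $-d_k^{\top}g_k=(g_k+r_t)^{\top}\bar B_k^{-1}g_k=g_k^{\top}\bar B_k^{-1}g_k+r_t^{\top}\bar B_k^{-1}g_k$, with $g_k^{\top}\bar B_k^{-1}g_k\ge\|g_k\|^2/\lambda_{\max}(\bar B_k)\ge\|g_k\|^2/(L_g+\cZ)$, so it remains to absorb the residual term $|r_t^{\top}\bar B_k^{-1}g_k|\le\|r_t\|\,\|\bar B_k^{-1}g_k\|\lesssim\theta_k\|g_k\|^{1/\theta}$ into a fixed fraction of $\|g_k\|^2/(L_g+\cZ)$, which fixes $M_1$. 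I expect this absorption --- and, relatedly, obtaining the sharp exponent $(1-\theta)/\theta$ above rather than a cruder one --- to be the main difficulty: the elementary bounds alone only give $-d_k^{\top}g_k\gtrsim\zeta_k\|g_k\|^2$ and leave a residual contribution that is not obviously $\mathcal O(\|g_k\|^2)$ once $\theta>\tfrac12$, so one must exploit that near the non-isolated minimum $\mathcal S$ the gradient $g_k$ lies --- up to a remainder of order $\dist(x_k,\mathcal S)$ --- in the range of $\nabla^2f$, equivalently that $\bar B_k^{-1}g_k$ is essentially the displacement $x_k-y_k$ to $\mathcal S$. This is exactly what \eqref{eq:eb} and the quadratic-growth inequality \eqref{eq:add:qg} encode, and the argument parallels the CG-near-non-isolated-minima analysis of~\cite{rebjock2024fast2}; once it is in place, explicit constants $M_1,M_2$ and an index $\bar k$ are read off.
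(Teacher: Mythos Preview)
Your treatment of the well-conditioning of $\bar B_k$, of part~(i), and of the norm bound $\|d_k\|$ is essentially the paper's argument, and your observation that necessarily $\theta\ge\tfrac12$ (via the descent lemma at a local minimum) is correct, if unnecessary here.

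The genuine gap is the descent bound $-d_k^\top g_k\ge\|g_k\|^2/M_1$. Your decomposition $-d_k^\top g_k=g_k^\top\bar B_k^{-1}g_k+r_t^\top\bar B_k^{-1}g_k$ forces you to absorb a residual of size $\lesssim\theta_k\|g_k\|^{1/\theta}$ into $\|g_k\|^2$, and as you correctly flag, this fails once $\theta>\tfrac12$ under the stated hypotheses: you would need $\theta_k=o(\|g_k\|^{2-1/\theta})$, which $\theta_k=\mathcal O(\zeta_k)$ alone does not provide. You suggest resolving this via a deeper structural analysis in the spirit of \cite{rebjock2024fast2}, but you do not carry it out, and the paper's whole point (cf.\ the remark following \Cref{thm:local-non}) is that the regularization is there precisely to \emph{avoid} such analysis.

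The paper's route is much shorter and bypasses the residual term entirely: it invokes the MINRES monotonicity property \eqref{eq:pkgk} (Theorem~\ref{lemmaMR}\,(ii)), which for $d_k=p_t$ and $b=-g_k$ gives
\[
-d_k^\top g_k \;\ge\; -p_1^\top g_k \;=\; \frac{g_k^\top\bar B_k g_k}{\|\bar B_k g_k\|^2}\,\|g_k\|^2.
\]
Once $\bar B_k\succ0$ is established (which you have), the ratio on the right is at least $1/\|\bar B_k\|$ --- just diagonalize and use $\sum_i\lambda_i^2v_i^2\le\lambda_{\max}\sum_i\lambda_iv_i^2$ --- and $\|\bar B_k\|$ is bounded by some $M_1$ since $x_k\to x^*$ and $\zeta_k\le\cZ$. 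This gives $-d_k^\top g_k\ge\|g_k\|^2/M_1$ directly, with no residual to control and no restriction on $\theta$. The missing idea, then, is to use the MINRES-specific bound \eqref{eq:pkgk} rather than the generic linear-system identity $d_k=-\bar B_k^{-1}(g_k+r_t)$.
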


\begin{proof}
We note that there is a neighborhood of $x^*$ such that the Euclidean projection $\mathcal P_{\mathcal S}$ onto $\mathcal S$ is well-defined (i.e., the set $\mathcal P_{\mathcal S}(x)$ is non-empty for all $x$ sufficiently close to $x^*$, \cite[Lemma 1.4]{rebjock2024fast1}). Moreover, due to \ref{C.3} and $x^* \in \mathcal S$, the error bound condition \eqref{eq:eb} needs to hold in a neighborhood of $x^*$. Thus, by the local Lipschitz continuity of the eigenvalues of a matrix, we can infer
\begin{equation} \label{eq:add:ew} \lambda_i(\nabla^2 f(x)) - \lambda_i(\nabla^2 f(\mathcal P_{\mathcal S}(x))) = \mathcal O(\dist(x,\mathcal S)) = \mathcal O(\|\nabla f(x)\|^{\frac{1-\theta}{\theta}}) \end{equation}
for all $i=1,\dots,n$ and $x$ in a neighborhood of $x^*$. In particular, by the definition of $\mathcal S$, there is $M > 0$ such that $\min_{i=1,\dots,n}\lambda_i(\nabla^2 f(x)) \geq -M \|\nabla f(x)\|^{\frac{1-\theta}{\theta}}$ for all $x$ close to $x^*$.  Due to $z_k \to \infty$, (this follows from \ref{C.1}), $\zeta \le \frac{1-\theta}{\theta}$, \eqref{eq:add:ew}, and $x_k \to x^*$ (by \Cref{global_LS_KL_new}), there are $\bar k$ and $M_1 >0$ such that 
\begin{equation*} \label{eq:add:low} B_k + \frac12 \zeta_k I \succeq 0, \quad \|\bar B_k\| \le \|B_k  - \nabla^2 f(x^*)\| + \|\nabla^2 f(x^*)\| + \zeta_k \le M_1\end{equation*}
for all $k\ge \bar k$. Hence, $\bar B_k$ is positive definite and the algorithm eventually will no longer return $\NPC$ directions. Furthermore, we have $d_k^\top \bar B_k d_k \geq \frac12 \zeta_k \|d_k\|^2$ and the curvature test will be passed due to  $\zeta_k \ge 2\min\{\cA,a_k\|g_k\|^\alpha\}$. Thus, no GD directions will be returned. Since $\bar B_k$ is now ensured to be positive definite, we can use \eqref{eq:pkgk} to obtain
\[ - d_k^{\top} g_k \ge  \frac{g_k^{\top} \bar B_k g_k}{g_k^{\top}\bar B_k \bar B_k g_k} \|g_k\|^2 \ge \frac{g_k^{\top} \bar B_k g_k}{\|\bar B_k\|\|\bar B_k^{1/2}g_k\|} \|g_k\|^2 \ge  \frac{1}{M_1} \|g_k\|^2. \]
Next, let us set $y_k:= \bar B_k^{-1} g_k = (B_k + \zeta_k I)^{-1} g_k$ and $\hat x_k := \mathcal P_{\mathcal S}(x_k)$. Then, by Taylor expansion and \ref{A.2}, it follows that
\begin{align*} y_k & = \bar B_k^{-1} [\nabla f(\hat x_k) + \nabla^2 f(\hat x_k)(x_k - \hat x_k) + \mathcal O(\|\hat x_k - x_k\|^2)] \\ & = \bar B_k^{-1} [ B_k (x_k - \hat x_k) + \mathcal O(\|\hat x_k - x_k\|^2)] \\ &= [ x_k-\hat x_k] + \bar B_k^{-1}[ \zeta_k (\hat x_k - x_k) + \mathcal O(\|\hat x_k - x_k\|^2)] \end{align*} 
and hence, using $\|\hat x_k - x_k\| = \dist(x_k,\mathcal S)$ and $\bar B_k \succeq \frac12 \zeta_k I$, this yields $\|y_k\| = \mathcal O(\dist(x_k,\mathcal S)) + \mathcal O(\zeta_k^{-1}\dist(x_k,\mathcal S)^2)$. Applying $\bar B_k d_k + r_k=-g_k$, the error bound \eqref{eq:eb}, $\theta_k = \mathcal O(\zeta_k)$, and $\zeta \le \frac{1-\theta}{\theta}$, there is some $M_2 >0$ such that
\begin{equation*}
    \begin{aligned}
        \|d_k\| & = \|\bar B_k^{-1}(r_k + g_k)\| \leq \|\bar B_k^{-1}\|\|r_k\| + \|y_k\| \\
        & \leq 2 \zeta_k^{-1}\theta_k \|g_k\| + \mathcal O(\max\{1,\zeta_k^{-1}\mathcal O(\dist(x_k,\mathcal S))\}\mathcal O(\dist(x_k,\mathcal S)))\\
        & \leq 2 \zeta_k^{-1}\theta_k \|g_k \| +  \mathcal O(\max\{\|g_k\|^{\frac{1-\theta}{\theta}},\zeta_k^{-1}\|g_k\|^{\frac{2-2\theta}{\theta}}\})\\
        & \leq M_2\max\{\|g_k\|, \|g_k\|^\frac{1-\theta}{\theta}\}
    \end{aligned}
\end{equation*}
(after increasing $\bar k$---if necessary). This finishes the proof.
\end{proof}


Now, we are ready to present the local superlinear convergence of $\{\|g_k\|\}$.

\begin{thm} \label[theorem]{thm:local-non}
Let \ref{A.1}--\ref{A.2}, \ref{B.4}, \ref{B.2}--\ref{B.3}, and \ref{C.1} be satisfied and let $\{x_k\}$ be generated by \Cref{alg:main_LS}. Suppose that the algorithm does not stop after finitely many iterations and $x^* \in \mathcal A \cap \mathcal S$ is an accumulation point of $\{x_k\}$ at which property \ref{C.3} holds with exponent $\theta = \frac12$. Furthermore, we assume $b_{k+1}/b_k \to 1$, $z_{k+1}/z_k \to 1$, 
 $\zeta_k \ge 2\min\{\cA,a_k\|g_k\|^\alpha\}$, $\theta_k = \mathcal O(\zeta_k)$, and $\sigma \in (0,\frac12)$. Then, the sequence $\{\|\nabla f(x_k)\|\}$ converges superlinearly to zero with order $1+\omega$ where $\omega \in (0,\min\{\beta,\zeta\})$.
\end{thm}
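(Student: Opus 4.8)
The plan is to localize the iteration with the help of \Cref{lemma:local1}, show that the line‑search eventually accepts the unit step, derive a Newton‑type recursion for $\{\|g_k\|\}$, and then bootstrap this recursion into a superlinear rate using the slow growth of the parameter sequences. First I would localize: since \ref{C.3} with $\theta=\tfrac12$ implies the KL property \ref{C.2}, \Cref{global_LS_KL_new} gives $x_k\to x^*$ and $\|g_k\|\to 0$. As $\theta=\tfrac12$ yields $\tfrac{1-\theta}{\theta}=1$ and $\zeta\le 1$ by \ref{B.3}, the hypotheses of \Cref{lemma:local1} hold, so there is $\bar k$ with, for all $k\ge\bar k$: $\mathrm{flag}_k=\mathrm{SOL}$, $\tfrac12\zeta_kI\preceq\bar B_k$ and $\|\bar B_k\|\le M_1$, $-d_k^\top g_k\ge\|g_k\|^2/M_1$, and $\|d_k\|\le M_2\|g_k\|$ (both arguments of the maximum in \Cref{lemma:local1} equal $\|g_k\|$); moreover $\bar B_kd_k=-(g_k+r_k)$ with $\|r_k\|\le\theta_k\|g_k\|$.

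Next I would show that, for $k$ large, the line‑search accepts its initial (unit) trial step, i.e. $\lambda_k\equiv 1$. Using the descent lemma with the local Lipschitz constant $L_H$ of $\nabla^2 f=B_k$ (here \ref{B.4} is used) and writing $d_k^\top B_kd_k=d_k^\top\bar B_kd_k-\zeta_k\|d_k\|^2=-d_k^\top g_k-d_k^\top r_k-\zeta_k\|d_k\|^2$ with $|d_k^\top r_k|+\zeta_k\|d_k\|^2=\mathcal O((\theta_k+\zeta_k)\|g_k\|^2)=o(\|g_k\|^2)$, one gets for the unit step
\[ f(x_k+d_k)-f(x_k)-\sigma\,d_k^\top g_k\;\le\;(\tfrac12-\sigma)\,d_k^\top g_k+\mathcal O(\zeta_k\|g_k\|^2+\|g_k\|^3)\;\le\;-(\tfrac12-\sigma)\tfrac{\|g_k\|^2}{M_1}+o(\|g_k\|^2), \]
which is negative for $k$ large because $\sigma<\tfrac12$. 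Hence \eqref{armijo} holds at the trial step and $\lambda_k\equiv 1$ eventually (taking the initial trial to be the unit step, as is the case e.g.\ for $s=1$).

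Third, I would derive the Newton recursion. By Taylor expansion with Lipschitz Hessian and $\lambda_k=1$,
\[ g_{k+1}=g_k+B_kd_k+\mathcal O(\|d_k\|^2)=g_k-(g_k+r_k)-\zeta_kd_k+\mathcal O(\|d_k\|^2)=-r_k-\zeta_kd_k+\mathcal O(\|d_k\|^2), \]
so, using $\|r_k\|\le\theta_k\|g_k\|\le b_k\|g_k\|^{1+\beta}$ (\ref{B.2}), $\zeta_k\|d_k\|\le M_2z_k\|g_k\|^{1+\zeta}$ (\ref{B.3} and $\|d_k\|\le M_2\|g_k\|$), and $\|d_k\|^2\le M_2^2\|g_k\|^2$,
\[ \|g_{k+1}\|\;\le\; b_k\|g_k\|^{1+\beta}+M_2\,z_k\|g_k\|^{1+\zeta}+C_0\|g_k\|^2,\qquad k\ge\bar k, \]
for some constant $C_0>0$. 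Fixing $\omega\in(0,\min\{\beta,\zeta\})$ I would factor this as $\|g_{k+1}\|\le\|g_k\|^{1+\omega}\Phi_k$ with $\Phi_k:=b_k\|g_k\|^{\beta-\omega}+M_2z_k\|g_k\|^{\zeta-\omega}+C_0\|g_k\|^{1-\omega}$, whose exponents $\beta-\omega,\ \zeta-\omega,\ 1-\omega$ are all positive; the claim reduces to $\Phi_k\to 0$.

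The term $C_0\|g_k\|^{1-\omega}\to 0$ is immediate, and for the first two terms I would combine the a‑priori (polynomial) decay rate of $\{\|g_k\|\}$ furnished by \Cref{global_LS_KL_rate} (with $\theta=\tfrac12$) with the controls $\sum_k a_k^{-1/\alpha}<\infty$, $\sum_k z_k^{-1/\zeta}<\infty$ (\ref{C.1}; this also gives $z_k\to\infty$, used in \Cref{lemma:local1}) and $b_{k+1}/b_k\to 1$, $z_{k+1}/z_k\to 1$, which force $b_k,z_k$ to grow slower than any geometric sequence. This lets one pick an index $k_1$ at which $\|g_{k_1}\|$ is small enough that $\Phi_{k_1}\le 1$, and then induct: once $\|g_k\|$ is below this threshold the recursion makes it decay doubly exponentially, outpacing the sub‑geometric growth of $b_k,z_k$, so $\Phi_k\to 0$ for all $k\ge k_1$. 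Hence $\|g_{k+1}\|\le\|g_k\|^{1+\omega}$ eventually, i.e.\ $\{\|\nabla f(x_k)\|\}$ converges to $0$ superlinearly with order $1+\omega$. \emph{The main obstacle is this last step}: turning the self‑improving inequality $\|g_{k+1}\|\le b_k\|g_k\|^{1+\beta}+\dots$, whose leading coefficients $b_k,z_k$ may diverge, into a genuine superlinear rate — which is precisely why the order is only $1+\omega$ for $\omega$ strictly below $\min\{\beta,\zeta\}$ and why the slow‑growth assumptions and \Cref{global_LS_KL_rate} are needed. A secondary subtlety is the reduction to the unit step (and the implicit role of the initial trial step size), which rests on $\sigma<\tfrac12$ and the near‑Newton structure from \Cref{lemma:local1}.
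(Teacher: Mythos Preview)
Your overall skeleton matches the paper (localize via \Cref{lemma:local1}, accept the unit step, derive the Newton recursion, then extract the superlinear rate), but two of your steps have genuine gaps that the paper handles differently.

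\textbf{Unit step.} Your claim $|d_k^\top r_k|+\zeta_k\|d_k\|^2=\mathcal O((\theta_k+\zeta_k)\|g_k\|^2)=o(\|g_k\|^2)$ presupposes $\zeta_k\to 0$ (hence $\theta_k\to 0$), which at this point you have not shown: you only know $\|g_k\|\to 0$ while $z_k\to\infty$, so $z_k\|g_k\|^\zeta$ is a race you have not yet resolved. The paper avoids this circularity by not introducing the residual at all. Instead of rewriting $d_k^\top B_kd_k=-d_k^\top g_k-d_k^\top r_k-\zeta_k\|d_k\|^2$, it uses the MINRES inequality $-d_k^\top g_k>d_k^\top\bar B_kd_k$ from \Cref{lemmaMR}\,(i) together with $\bar B_k\succeq\tfrac12\zeta_kI$ (\Cref{lemma:local1}) to obtain
\[
f(x_k+d_k)-f(x_k)-\sigma\,d_k^\top g_k\;\le\;\Big(\big(\tfrac{\sigma}{2}-\tfrac34\big)\zeta_k+\tfrac{L_H}{6}\|d_k\|\Big)\|d_k\|^2,
\]
which is nonpositive as soon as $\zeta_k\gtrsim\|g_k\|$; this follows directly from $z_k\ge\bar z$ and $\zeta\le 1$ (plus $z_k\to\infty$ when $\zeta=1$), without any knowledge of $\zeta_k\to 0$.

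\textbf{Showing $\Phi_k\to 0$.} \Cref{global_LS_KL_rate} only gives a $\liminf$ polynomial rate along a subsequence; since $b_{k+1}/b_k\to 1$ permits super-polynomial growth such as $b_k=e^{\sqrt{k}}$, this is not enough to produce a seed $k_1$ with $\Phi_{k_1}\le 1$, and your induction cannot get started. The paper does not use \Cref{global_LS_KL_rate} here. Instead, once $\lambda_k\equiv 1$ it combines $f(x_k)-f(x_{k+1})\ge\sigma\|g_k\|^2/M_1$ with the {\L}ojasiewicz inequality ($\theta=\tfrac12$) and sums, obtaining q-linear convergence of $\Gamma_j=\sum_{k\ge j}\|g_k\|$, hence r-linear convergence of $\{\|g_k\|\}$. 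A short auxiliary lemma (\Cref{lem:auxiliary}: an r-linearly vanishing sequence times a sequence with ratio tending to $1$ goes to $0$) then gives $b_k\|g_k\|^{\gamma}\to 0$ and $z_k\|g_k\|^{\gamma}\to 0$ for every $\gamma>0$, which immediately forces $\Phi_k\to 0$. This intermediate r-linear step is the missing bridge in your argument.
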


\begin{proof}
By \Cref{global_LS_KL_new}, we have $x_k\to x^*$ and $\|g_k\| \to 0$, and we may assume $\{x_k\} \subset B_{2r}(x^*) \subset V_H$, $r > 0$. Let $L_g, L_H$ again denote the Lipschitz constants of $\nabla f$ and $\nabla^2 f$ on $B_{2r}(x^*)$. 
By our assumptions on the parameters, Lemma~\ref{lemma:local1} is applicable (with $\theta = \frac12$), i.e., there is $\bar k \in \mathbb{N}$ such that $d_k = p_t$ for all $k\ge \bar k$. By \Cref{lemmaMR} (i), $\sigma \in (0,\frac12)$, and Lemma~\ref{lemma:local1} (ii), we have 
\begingroup
\allowdisplaybreaks
\begin{align} \nonumber
   &f(x_k+d_k)-f(x_k)-\sigma g_k^{\top} d_k  \le (1-\sigma) g_k^{\top} d_k+\frac{1}{2} d_k^\top B_k d_k+ \frac{L_H}{6}\|d_k\|^3 \\ \nonumber
   & \le (\sigma - 1) d_k^\top \bar{B}_k d_k+\frac{1}{2} d_k^\top B_k d_k+\frac{L_H}{6}\|d_k\|^3 \\ \nonumber
   & =\Big(\sigma-\frac{1}{2}\Big) d_k^{\top} \bar{B}_k d_k - \frac{1}{2} \zeta_k \|d_k\|^2+\frac{L_H}{6}\|d_k\|^3 \\ \nonumber
   & \leq \Big(\Big(\frac{\sigma}{2}-\frac34\Big)\zeta_k + \frac{L_H}{6}\|d_k\|\Big)\|d_k\|^2  \\
   & \le \Big(\Big(\frac{\sigma}{2}- \frac{3}{4}\Big) \min\{\cZ,z_k\|g_k\|^\zeta\} + \frac{L_HM_2}{6} \|g_k\|\Big) \|d_k\|^2 \label{eq:lambdaequal1}
   \end{align}
   \endgroup
   for all $k\geq \bar k$. Moreover, due to $\|g_k\| \to 0$ as $k \to \infty$ and $\zeta \le 1$, there exists $K \ge \bar k$ such that for all $k \ge K$ the right hand side of \eqref{eq:lambdaequal1} is non-positive. Thus, the step size $\lambda_k=1$ will be accepted in the Armijo linesearch for all $k \geq K$. Consequently, using Lemma~\ref{lemma:local1} (ii) and \eqref{armijo}, we obtain
    \begin{equation}
        \label{eq:KL_SOL3}
        \begin{aligned}
            \frac{f(x_k)-f(x_{k+1})}{\|g_k\|} &\ge \sigma \lambda_k \frac{-d_k^{\top}g_k}{\|g_k\|} \ge \frac{\sigma}{M_1} {\|g_k\|} \quad \forall~k \geq K. 
        \end{aligned}
        \end{equation}
    Following our earlier strategies (cf. \eqref{eq:use-this-one-new} and \eqref{eq:sum1-new} in the proof of \Cref{global_LS_KL_new}), summing \eqref{eq:KL_SOL3} for $k \geq j \geq K$ and applying \eqref{eq:add:loja} with $\theta = \frac12$, we have
    \[ \Gamma_j := {\sum}_{k=j}^\infty \|g_k\| \leq \frac{M_1c}{\sigma} \sqrt{f(x_j)-f(x^*)} \leq \frac{M_1c}{\sqrt{2\mu}\sigma} \|g_j\| = C_\theta (\Gamma_j - \Gamma_{j+1}), \]
    %
    %
    where $C_\theta = M_1c/(\sqrt{2\mu}\sigma)$.
    If $C_\theta \le 1$, we obtain $\Gamma_{j+1} \le 0$ and finite step convergence. Otherwise, we can infer $\Gamma_{j+1} \le (1-1/C_\theta) \Gamma_j$ which establishes q-linear convergence of $\{\Gamma_j\}$ and r-linear convergence of $\{\|g_j\|\}$. To continue we invoke the following technical auxiliary result (see \Cref{app:auxiliary} for a proof).

    \begin{lem} \label[lemma]{lem:auxiliary} Let $\{u_k\} \subset \bR_+$ be a sequence that converges r-linearly to $0$ and let $\{v_k\} \subset \bR_+$ be given with $v_{k+1}/v_k \to 1$. Then, we have $u_kv_k \to 0$. 
    \end{lem}

Hence, applying \Cref{lem:auxiliary}, we can infer $ b_k^{\gamma_1}\|g_k\|^{\gamma_2} \to 0$ and $z_k^{\gamma_1}\|g_k\|^{\gamma_2} \to 0$, $k\to\infty$, for any fixed $\gamma_1, \gamma_2 >0$. Thus, by \ref{A.2} and Lemma~\ref{lemma:local1}, and using $\|\bar B_k d_k + g_k\| = \|r_k\| \leq \theta_k\|g_k\|$, we have
    \begin{equation*}
        \begin{aligned}
            \|g_{k+1}\| & = \|g_{k+1} - g_{k} - B_k d_k + B_k d_k + g_{k}\|\\
            & \le  \|\nabla f(x_{k+1}) - \nabla f(x_{k}) - \nabla^2 f(x_k) d_k\| + \zeta_k \|d_k\| + \|r_k\| \\
            & \le \frac{L_H}{2}\|d_k\|^2 + \min\{\cZ, z_k\|g_k\|^{\zeta}\} M_2 \|g_k\| +  \min\{\cB, b_k\|g_k\|^{\beta}\} \|g_k\|\\
            & \le \frac{L_H M_2^2}{2}\|g_k\|^2 + M_2z_k\|g_k\|^{1+\zeta} +  b_k\|g_k\|^{1+\beta} \\
        \end{aligned}
    \end{equation*}
    as $k\to\infty$. This finishes the proof.
\end{proof}

The different assumptions on the parameters in \Cref{thm:local-non} are satisfied for the exemplary and simple choices:
\[ \alpha=\beta=\zeta, \quad 2a_k = z_k = (k\log(k)^2)^\zeta, \quad b_k \leq z_k, \quad 2\cA = \cZ, \quad \cB \leq \cZ, \]
and $b_{k+1}/b_k \to 1$. In this case, setting $\alpha=\beta=\zeta = 1$, the rate of convergence can be arbitrarily close to quadratic convergence. 

\begin{rem}
The parameter $\zeta_k$ plays a crucial role in our derivations. In the global convergence analysis, it allows us to obtain descent estimates along $\NPC$ directions. Moreover, $\zeta_k$ serves as a pseudo-eigenvalue of the matrix $\bar B_k$ which facilitates the avoidance of strict saddle points in an eigenpair-free manner. By properly choosing $\zeta_k$ and the parameters in the curvature test \eqref{eq:cur}, we can further ensure that $\bar B_k$ is positive definite. In particular, the linear system \eqref{eq:subreg} is consistent around non-isolated minima that satisfy the {\L}ojasiewicz inequality with $\theta = \frac12$. This then brings many fruitful properties for the local analysis.
\end{rem}
\section{Numerical Experiments}\label{sec: Num}

In this section, we report the numerical performance of the proposed linesearch approach on different problems. We first list implementational details in \Cref{ssec: Num1}. We then numerically confirm and illustrate the local superlinear convergence properties of Algorithm~\ref{alg: MINRES_LS} (as discussed in \Cref{thm:local-non}) in \Cref{ssec: Num2}. This test is performed using Matlab on a MacBook Pro with 2 GHz Quad-Core Intel Core i5 and 16 GB memory. Additional tests on a large-scale auto-encoder problem and a series of problems from the CUTEst test collection are conducted in Sections~\ref{ssec: Num3} and \ref{ssec: Num4}. These experiments are run using Python on a server with NVIDIA GeForce RTX 3090 and 24 GB memory size.
\subsection{Implementation details}\label{ssec: Num1}
Our numerical comparisons will be based on the following algorithms: 
	\begin{itemize}
		\item \textbf{Newton-CR} \cite{dahito2019conjugate}, \textbf{Newton-CG} \cite{dembo1983truncated}: The classical Newton method with CR or
        CG as linear system solver; the step-size is generated using the strong Wolfe-Powell linesearch condition \cite{more1994line}.
		\item \textbf{Armijo-L-BFGS} \cite{li2001global,kanzow2023regularization}: We test the L-BFGS method with Armijo linesearch and a cautious updating scheme; in particular, the curvature pair $\{y_k, s_k\}$ is only used when $y_k^{\top} s_k \ge 10^{-18} \|s_k\|^2$.
		\item \textbf{Wolfe-L-BFGS} \cite{liu1989limited}: We implement the classical Liu-Nocedal L-BFGS method with a strong Wolfe-Powell linesearch condition.\footnote{Note that the strong Wolfe-Powell linesearch condition in \textbf{Newton-CR}, \textbf{Newton-CG} and \textbf{Wolfe-L-BFGS} is based on the Mor\'e-Thuente line search  \cite{more1994line}, which differs from the linesearch with zoom \cite[Algorithm 3.5]{jorge2006numerical} used in \cite[Section 4]{liu2022newton}.}
\end{itemize}  
We implement \Cref{alg:main_LS} with L-BFGS approximations and full Hessians, which are referred to as \textbf{L-BFGS-MR} and \textbf{Newton-MR}.  \textbf{L-BFGS-MR} uses a slightly adjusted curvature test (see \eqref{eq:cur2}) and \textbf{Newton-MR} uses the original test \eqref{eq:cur}. The L-BFGS matrices are built via the compact form \cite{byrd1994representations}:
\begin{align*}
    S_k&=\left[ s_{j_0}, \dots, s_{j_{\tilde{m}-1}} \right], \quad Y_k=\left[ y_{j_0}, \dots, y_{j_{\tilde{m}-1}} \right], \quad \tilde{m}=\min\{m,k+1\}\\
    B_{k}^{\mathrm{BFGS}}&=\gamma_k I-\left[\begin{array}{ll}
            \gamma_k S_{k} & Y_{k}
        \end{array}\right]\left[\begin{array}{cc}
            \gamma_k S_{k}^{\mathrm{\top}} S_{k} &     L_{k} \\
        L_{k}^{\mathrm{\top}} & -D_{k}
        \end{array}\right]^{-1}\left[\begin{array}{ll}
            \gamma_k S_{k} & Y_{k}
            \end{array}\right]^{\top}.
\end{align*}
Here, $L_k$ is the strictly lower part of $S_k^{\top} Y_k$, $D_k$ is the diagonal part of $S_k^{\top} Y_k$,  $R_k$ is the non-strictly upper part of $S_k^{\top} Y_k$. We choose $m=10$ and $ \gamma_k=y_k^{\top} y_k/ y_k^{\top} s_k$. In contrast to \textbf{Armijo-L-BFGS}, we use the pair $\{y_k, s_k\}$ to update $B_k^{\mathrm{BFGS}}$ when $|y_k^{\top} s_k| \ge {10^{-18}} \|s_k\|^2$. Thus, the L-BFGS matrix can be generally indefinite which allows us to explore possible non-positive curvature.

All algorithms use the same initial points generated from the uniform distribution on $[0,1]^{n}$. We set the maximum number of iterations for all subproblem solvers to be 1000. We terminate algorithms when $\|g_k\| \le 10^{-10}$, the step-size is less than $10^{-18}$, or the number of oracle calls reaches $10^{5}$. We set $s=1$, $\rho=0.5$, $\sigma=10^{-4}$ in \Cref{alg: LS} and $\sigma_{\text{Wolfe}}=0.9$ in the Wolfe-Powell linesearch. In \eqref{eq:cur}, \eqref{eq:cur2} and \ref{B.3}, we set $\cA=10^{-12}/2$, $\cZ=10^{-12}$, $a_k=(k\log(k)^2)^{\alpha}/2$ and $z_k=(k\log(k)^2)^{\zeta}$ with $\alpha=\zeta=1$ and $\bar M=10^8$. We choose $\theta_k=\min\{0.1, \sqrt{\|g_k\|}\}$ for all truncated Newton methods and  $\theta_k=\min\{0.1, \log(k) \sqrt{k\|g_k\|}\}$ for \textbf{L-BFGS-MR}.

\begin{rem}
If we replace \eqref{eq:cur} with the following refined curvature test
	   \begin{equation}\label{eq:cur2}
		 \begin{cases}
		   p_t^{\top}\bar{B}_k p_t \ge \min\{\cA, a_k \|g_k\|^\alpha\}\max\{\|p_t\|^2, \|g_k\|^2\}, & \text{if $d_k=p_t$},\\
		   |d_k^{\top}B_kd_k| <  \bar M \|d_k\|^2, & \text{if $d_k=r_{t-1}$},
		 \end{cases}
	   \end{equation}
	   $\bar M>0$, then Theorems~\ref{global_LS} and \ref{global_LS_KL} hold without \ref{B.1}. Specifically, Lemma~\ref{lemma: LS_SOL} (i) is satisfied with $C_k=1$ by Theorem~\ref{lemmaMR}~(i) and the first inequality in \eqref{eq:cur2}. Moreover, using the second inequality in \eqref{eq:cur2}, we can rewrite \eqref{eq:above1} as
	   $$(\sigma-1) \theta_{k_\ell} \|g_{k_\ell}\|^2 < \rho^{-1} \lambda_{k_{\ell}} d_{k_\ell}^\top (\hat B_{k_\ell}-B_{k_\ell}) d_{k_\ell}/2 \le \rho^{-1} \lambda_{k_{\ell}}(\|\hat B_{k_\ell}\| +\bar M)\|d_{k_\ell}\|^2/2, $$
	   which also leads to a contradiction as $\ell \to \infty$. Thus, Theorems~\ref{global_LS} and \ref{global_LS_KL} remain valid. The test \eqref{eq:cur2} can be used when the approximations $\{B_k\}$ are chosen as indefinite and potentially unbounded (quasi-Newton) matrices.
\end{rem}

\begin{figure}[t]
\begin{tikzpicture}[scale=1]
	\node[right] at (0,0) {\includegraphics[width=3.5cm,trim=46 0 0 0,clip]{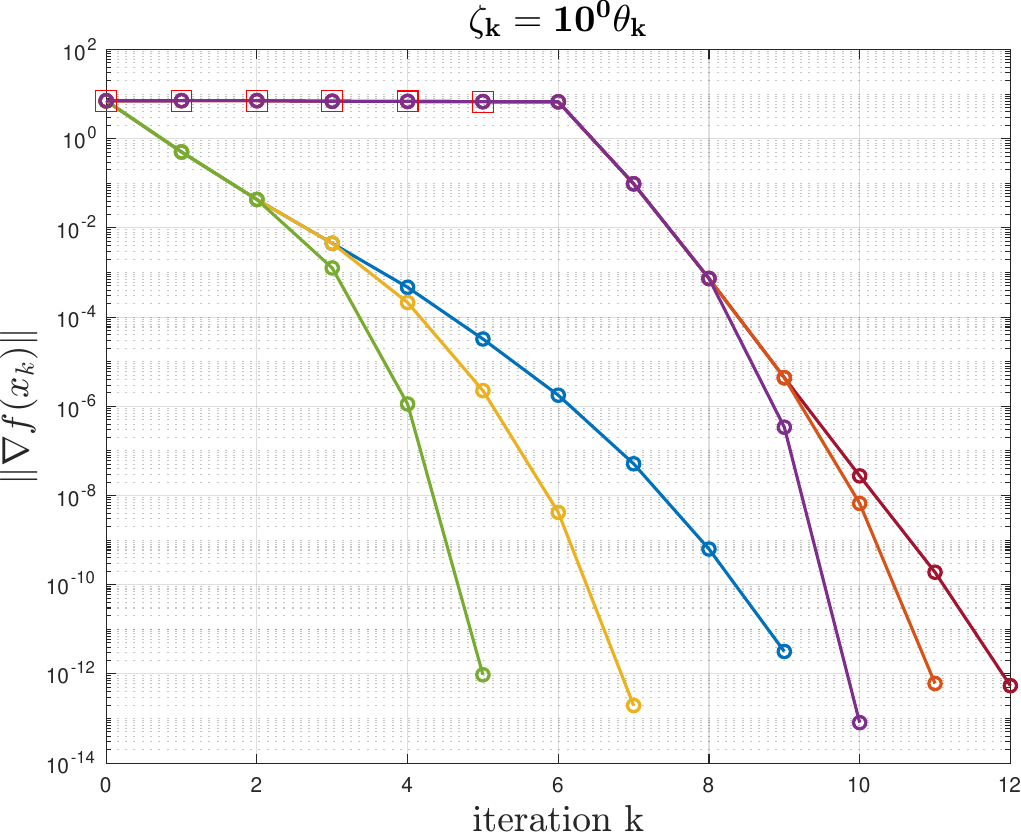}};
	\node[right] at (3.8,0) {\includegraphics[width=3.5cm,trim=46 0 0 0,clip]{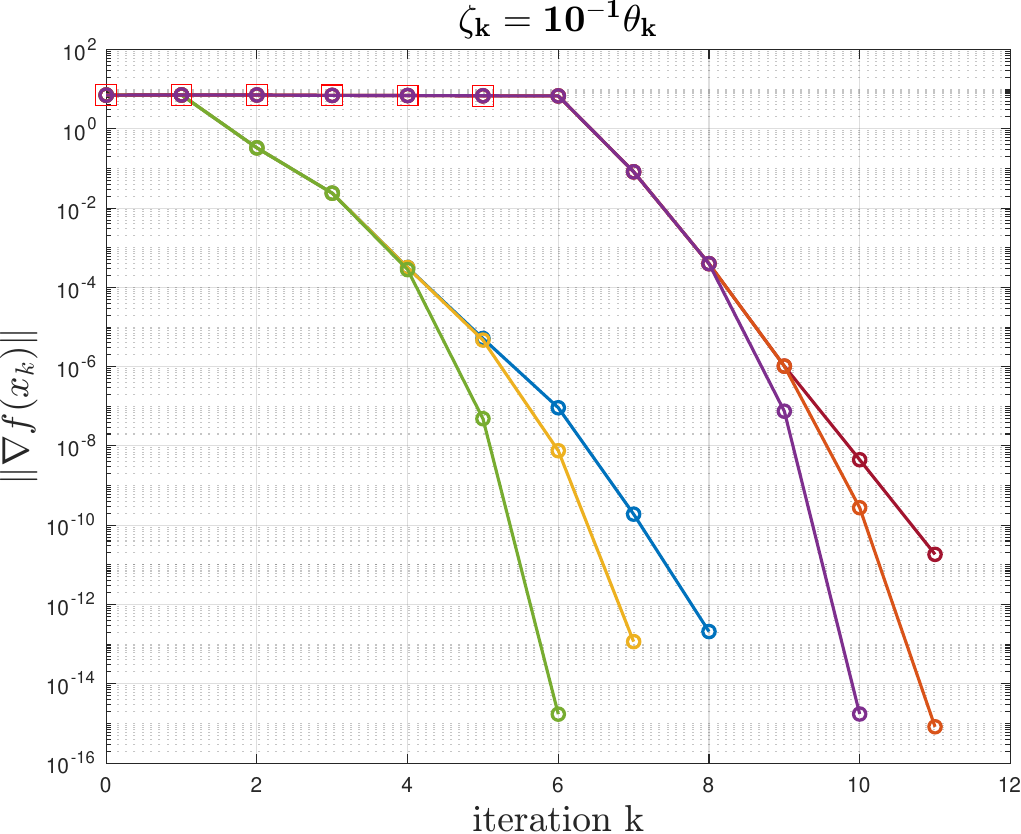}};
	\node[right] at (7.6,0) {\includegraphics[width=3.5cm,trim=46 0 0 0,clip]{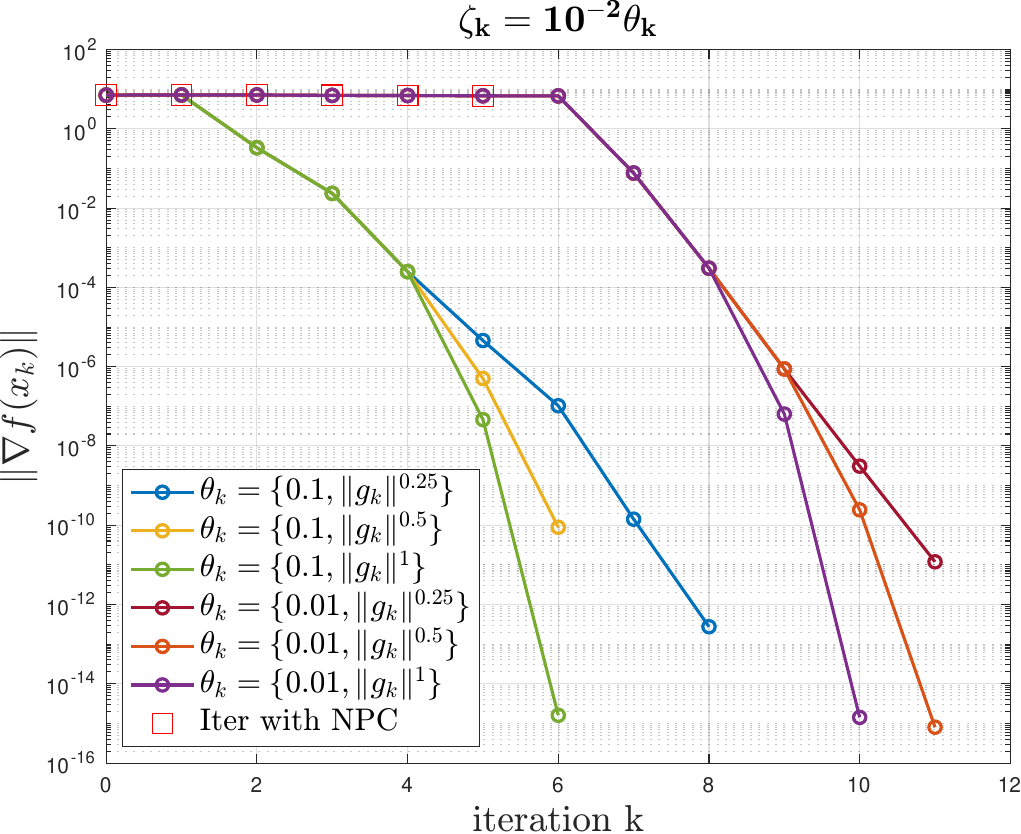}};
    \draw (11.6,1.39)--(11.6,-1.31);
    \draw (11.55,-1.31)--(11.6,-1.31);
    \draw (11.55,1.39)--(11.6,1.39);
    \draw [fill=white,white] (11.55,0.6) rectangle (11.65,-0.52); 
    \node at (11.6,0.04) {\rotatebox{-90}{{\tiny $\|\nabla f(x_k)\|$}}};
\end{tikzpicture}

	\caption{Convergence of $\{\|g_k\|\}$ on the toy function \eqref{eq:prob-toy} for different choices of $\theta_k$ and $\zeta_k$. Iterations where NPC information is detected are marked with a red square.}
	\label{fig:local}
\end{figure}

\subsection{Testing local superlinear convergence}\label{ssec: Num2}
We first use the test function $f$ in \cite[Appendix A]{rebjock2024fast2} to empirically verify local superlinear convergence of \textbf{Newton-MR} (as discussed in \Cref{thm:local-non}):
\begin{equation} \label{eq:prob-toy} f(x, y): =\frac{1}{2}\|y-\sin (x)\|^2, \quad x\in \bR^n, \quad y\in \bR^n.
\end{equation}
The points in the set $\{(x,y) \in \bR^n \times  \bR^n : y=\sin(x)\}$ are the global solutions of this problem. Furthermore, in \cite[Appendix A]{rebjock2024fast2}, it is shown that $f$ satisfies the PL condition and therefore \ref{C.3} is valid with $\theta=\frac12$. We choose $n=200$, the inexactness parameter $\theta_k=\min\{\mathsf{B}, \|g_k\|^{\beta}\}$ with $\mathsf{B} \in \{0.01,0.1\}$, $\beta \in\{1,\frac12,\frac14\}$, and $\zeta_k=\zeta \theta_k$ where $\zeta\in\{1,10^{-1},10^{-2}\}$. The results are depicted in \Cref{fig:local}; the superlinear and nearly quadratic convergence of $\{\|g_k\|\}$ is clearly visible for the different choices of $\beta$ and $\zeta$--aligning with the guarantees proven in \Cref{thm:local-non}. Furthermore, smaller choices of $\theta_k$ and $\zeta_k$ seem to lead to more iterations that detect NPC information. 

\begin{table}[t]
\centering
\setlength{\tabcolsep}{5pt}
\NiceMatrixOptions{cell-space-limits=1pt}
\begin{NiceTabular}{|c| c c c c|}%
\toprule
\Block{1-1}{Dataset} & \Block{1-1}{${n}$} & \Block{1-1}{${d}$} & \Block{1-1}{Encoder network architecture} & \Block{1-1}{Reference} \\ \Hline
\Block{1-1}{\texttt{CIFAR10}} & \Block{1-1}{$50,000$} & \Block{1-1}{$1,664,232$} & \Block{1-1}{$3,072-256-128-64-32-16-8$} & \cite{krizhevsky2009learning} \\ 
\Block{1-1}{\texttt{STL10}} & \Block{1-1}{$5,000$} & \Block{1-1}{$8,365,504$} & \Block{1-1}{$27,648-150-100-50-25-6$} & \cite{coates2011analysis} \\
\bottomrule 
\end{NiceTabular}
\caption{Auto-encoder model for \texttt{CIFAR10} and \texttt{STL10} datasets.}
\label{table: auto}
\end{table}

\begin{table}[t]
\centering
\setlength{\tabcolsep}{5pt}
\NiceMatrixOptions{cell-space-limits=1pt}

\begin{NiceTabular}{|c| p{0.8cm}p{1.6cm}p{1.6cm}p{1cm}p{1cm}p{0.8cm}|}%
 [ 
   code-before = 
    \rectanglecolor{lavender!30}{2-2}{7-2}
    \rectanglecolor{lavender!30}{2-4}{7-4}
    \rectanglecolor{lavender!30}{2-6}{7-6}
 ]
\toprule
\Block[c]{1-1}{Dataset: \texttt{CIFAR10}} & \Block{1-1}{Iter} & \Block{1-1}{Fun} & \Block{1-1}{$\|g_k\|$} & \Block{1-1}{Oracles} & \Block{1-1}{Time(s)} & \Block{1-1}{Ratio}  \\ \Hline
\Block{1-1}{\textbf{L-BFGS-MR}} & \Block{1-1}{303.7} & \Block{1-1}{$8.629\cdot10^{-2}$} & \Block{1-1}{$7.94\cdot10^{-11}$} & \Block{1-1}{687.8} & \Block{1-1}{163.2} & \Block{1-1}{100\%} \\
\Block{1-1}{\textbf{Armijo-L-BFGS}} & \Block{1-1}{323.7} & \Block{1-1}{$8.784\cdot10^{-2}$} & \Block{1-1}{$4.16\cdot10^{-10}$} & \Block{1-1}{789.7}  & \Block{1-1}{160.2}  & \Block{1-1}{0\%} \\
\Block{1-1}{\textbf{Wolfe-L-BFGS}} & \Block{1-1}{286.4} & \Block{1-1}{$8.759\cdot10^{-2}$} & \Block{1-1}{$4.06\cdot10^{-10}$} & \Block{1-1}{1250.0} & \Block{1-1}{498.7} & \Block{1-1}{0\%} \\
\Block{1-1}{\textbf{Newton-MR}} & \Block{1-1}{35.7} & \Block{1-1}{$8.287\cdot10^{-2}$} & \Block{1-1}{$9.97\cdot10^{-12}$} & \Block{1-1}{601.0} & \Block{1-1}{333.4} & \Block{1-1}{100\%} \\
\Block{1-1}{\textbf{Newton-CR}} & \Block{1-1}{49.8} & \Block{1-1}{$8.306\cdot10^{-2}$} & \Block{1-1}{$9.17\cdot10^{-12}$} & \Block{1-1}{799.3} & \Block{1-1}{424.5} & \Block{1-1}{100\%} \\
\Block{1-1}{\textbf{Newton-CG}} & \Block{1-1}{45.6} & \Block{1-1}{$8.632\cdot10^{-2}$} & \Block{1-1}{$9.71\cdot10^{-12}$} & \Block{1-1}{844.8} & \Block{1-1}{447.9} & \Block{1-1}{100\%} \\
\midrule
\end{NiceTabular}
\begin{NiceTabular}{|c| p{0.8cm}p{1.6cm}p{1.6cm}p{1cm}p{1cm}p{0.8cm}|}%
 [ 
   code-before = 
    \rectanglecolor{lavender!30}{2-2}{7-2}
    \rectanglecolor{lavender!30}{2-4}{7-4}
    \rectanglecolor{lavender!30}{2-6}{7-6}
 ]
\midrule
\Block{1-1}{Dataset: \texttt{STL10}} & \Block{1-1}{Iter} & \Block{1-1}{Fun} & \Block{1-1}{$\|g_k\|$} & \Block{1-1}{Oracles} & \Block{1-1}{Time(s)} & \Block{1-1}{Ratio} \\ \Hline
\Block{1-1}{\textbf{L-BFGS-MR}} & \Block{1-1}{150.1} & \Block{1-1}{$1.265\cdot10^{-1}$}   & \Block{1-1}{$7.81\cdot10^{-11}$} & \Block{1-1}{370.0} & \Block{1-1}{101.4} & \Block{1-1}{100\%} \\
\Block{1-1}{\textbf{Armijo-L-BFGS}} & \Block{1-1}{212.9} & \Block{1-1}{$1.345\cdot10^{-1}$} & \Block{1-1}{$3.14\cdot10^{-10}$} & \Block{1-1}{545.3} & \Block{1-1}{79.6} & \Block{1-1}{0\%} \\
\Block{1-1}{\textbf{Wolfe-L-BFGS}} & \Block{1-1}{156.4} & \Block{1-1}{$1.341\cdot10^{-1}$} & \Block{1-1}{$3.12\cdot10^{-10}$} & \Block{1-1}{949.6} & \Block{1-1}{239.6} & \Block{1-1}{0\%} \\
\Block{1-1}{\textbf{Newton-MR}} & \Block{1-1}{25.2} & \Block{1-1}{$1.247\cdot10^{-1}$} & \Block{1-1}{$1.38\cdot10^{-11}$} & \Block{1-1}{493.3} & \Block{1-1}{168.5} & \Block{1-1}{100\%} \\
\Block{1-1}{\textbf{Newton-CR}} & \Block{1-1}{42.3} & \Block{1-1}{$1.292\cdot10^{-1}$}    & \Block{1-1}{$1.10\cdot10^{-10}$} & \Block{1-1}{775.0} & \Block{1-1}{230.5} & \Block{1-1}{97\%} \\
\Block{1-1}{\textbf{Newton-CG}} & \Block{1-1}{33.1} & \Block{1-1}{$1.254\cdot10^{-1}$} & \Block{1-1}{$7.14\cdot10^{-12}$} & \Block{1-1}{675.1} & \Block{1-1}{211.7} & \Block{1-1}{100\%} \\
\bottomrule
\end{NiceTabular}

    \caption{Numerical results for the auto-encoder problem. Data are recorded at termination, triggered by either ``convergence'' ($\|g_k\| < 10^{-10}$) or ``step-size stagnation'' ($\alpha_k < 10^{-18}$). In particular, ``Iter'', ``Fun'', and ``Oracle'' are the total number of (outer) iterations, the function value, and the oracle calls, respectively. An oracle call refers to an operation equivalent to a single function evaluation in terms of complexity. Time is measured in seconds. The results are averaged over 100 independent runs. Lastly, ``Ratio'' represents the percentage that the algorithm terminates with convergence among the 100 runs.}
    \label{table: time}
\end{table}

\subsection{Auto-encoder problem}\label{ssec: Num3}
We consider the following nonconvex deep auto-encoder problem 
\begin{equation} \label{eq:prob-auto} f(\mathbf{x}_{\mathcal{E}}, \mathbf{x}_{\mathcal{D}})=\frac{1}{n} \sum_{i=1}^n\|\mathbf{a}_i-\mathcal{D}(\mathcal{E}(\mathbf{a}_i ; \mathbf{x}_{\mathcal{E}}) ; \mathbf{x}_{\mathcal{D}})\|^2+\lambda \psi(\mathbf{x}_{\mathcal{E}}, \mathbf{x}_{\mathcal{D}}), \end{equation}
where $\mathcal{E}: \bR^{p} \to \bR^{q}$ and $\mathcal{D}: \bR^{\ell} \to \bR^{p}$ are encoder and decoder mappings, respectively. Here, $x \in \bR^d$ is decomposed via $x =(\mathbf{x}_{\mathcal{E}}^{\top},\mathbf{x}_{\mathcal{D}}^{\top})^{\top}$; we refer to \cite{hinton2006reducing,martens2010deep,xu2020second} for more details. We use the same structure of the neural network and general setup as in \cite{liu2022newton}. To be more specific, the regularization function $\psi$ is chosen as $\psi(x)=\sum_{i=1}^{d} x_i^2/(1+x_i^2)$ and we set $\lambda=10^{-3}$. The initial point is drawn from a normal distribution with zero mean and standard derivation $10^{-8}$. We run experiments on the datasets \texttt{CIFAR10} and \texttt{STL10}\footnote{The \texttt{STL10} dataset contains colored images in ten classes. We relabel the even classes as ``0'' and the odd ones as ``1''.}; see \Cref{table: auto} for further information. 

\begin{figure}[t]
  \centering
  \subfigure{
  \includegraphics[width=5.5cm]{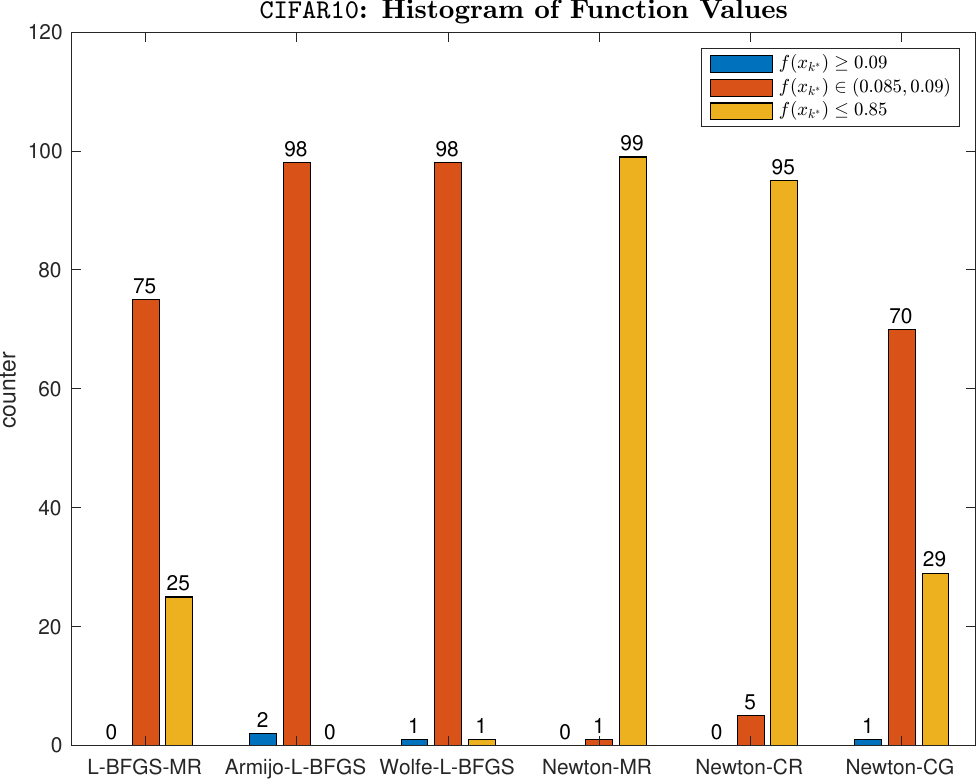}
  }
  \subfigure{
  \includegraphics[width=5.5cm]{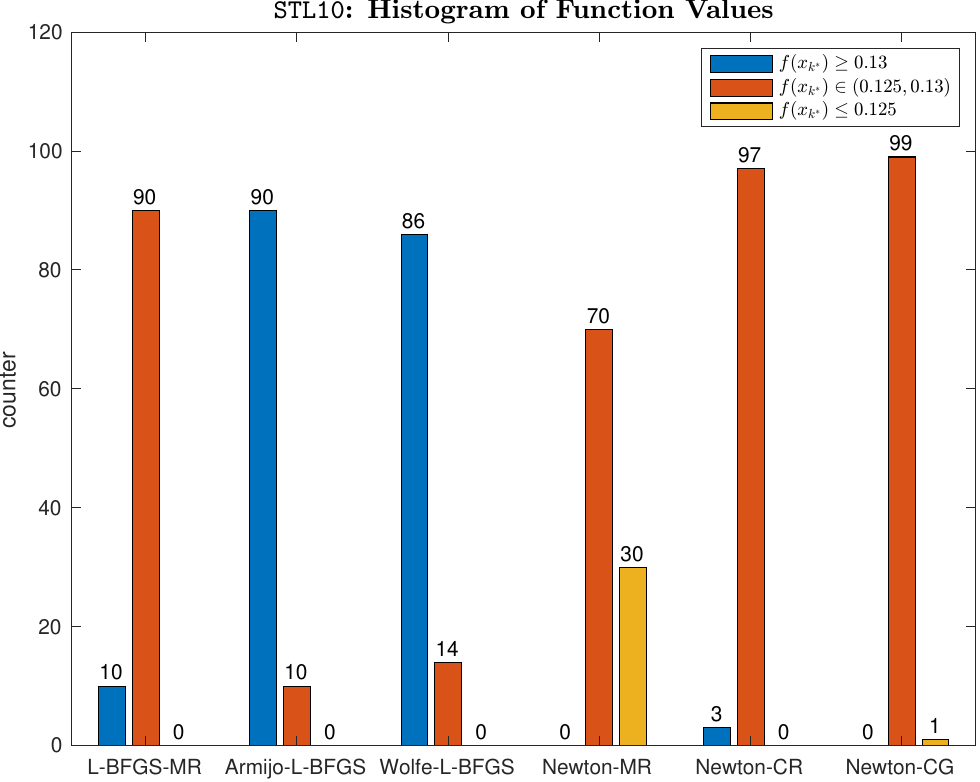}
  }
  \caption{Histograms of the final function values returned by the different algorithms when applied to the auto-encoder problem. The reported function values, $f(x_{k^*})$, are taken as the average over 100 independent runs.}
  \label{fig:count}
\end{figure}

\begin{figure}[t]
	\centering
	\includegraphics[scale=0.3]{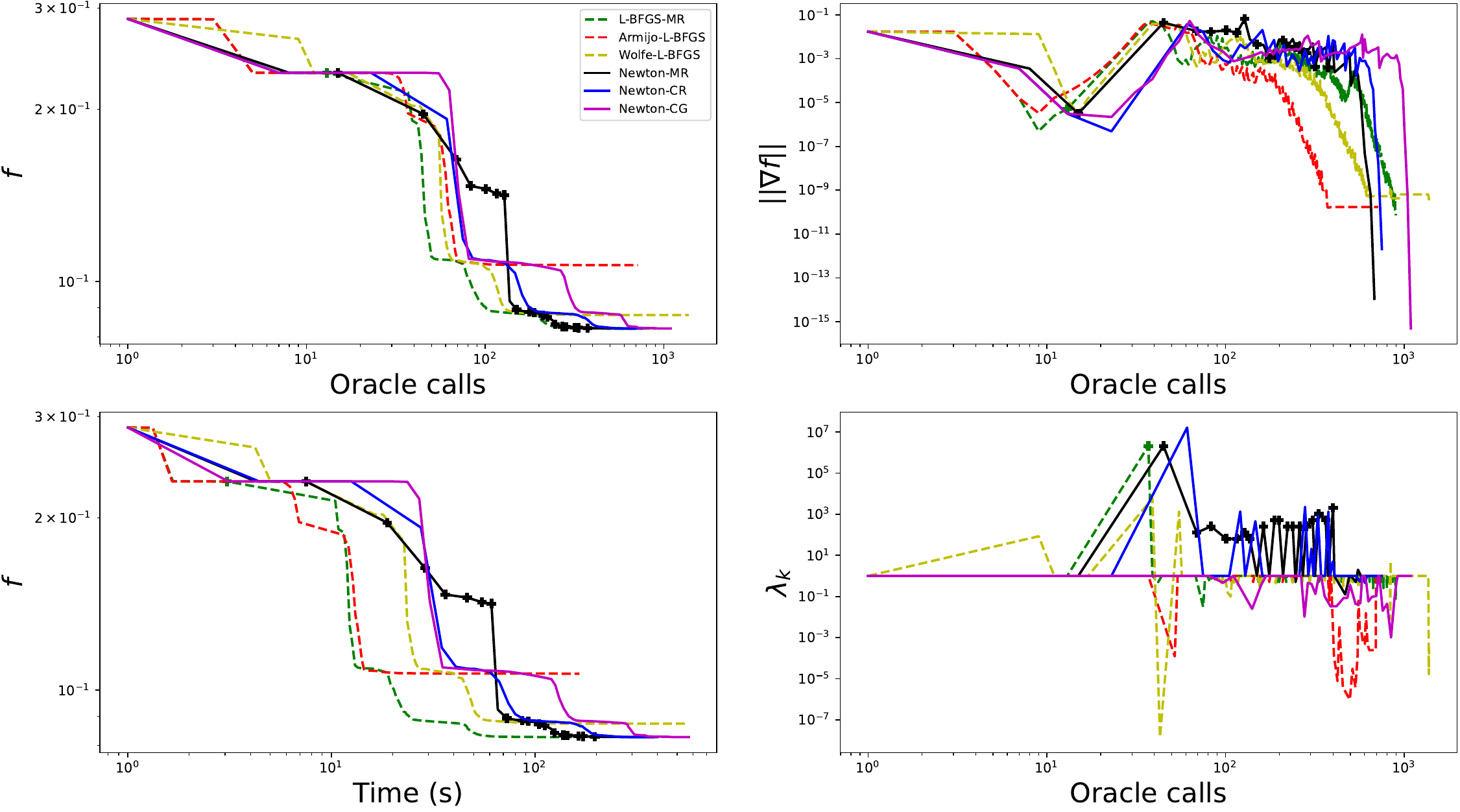}
    \caption{Performance of the different algorithms on the deep auto-encoder problem \eqref{eq:prob-auto} using the \texttt{CIFAR10} dataset. Changes of the function values, the norm of the gradient, and the step sizes w.r.t. to the number of oracle calls and cpu-time are reported. The markers indicate iterations with negative curvature detection.}
    \label{cifar}
\end{figure}

\begin{figure}[t]
	\centering
	\includegraphics[scale=0.3]{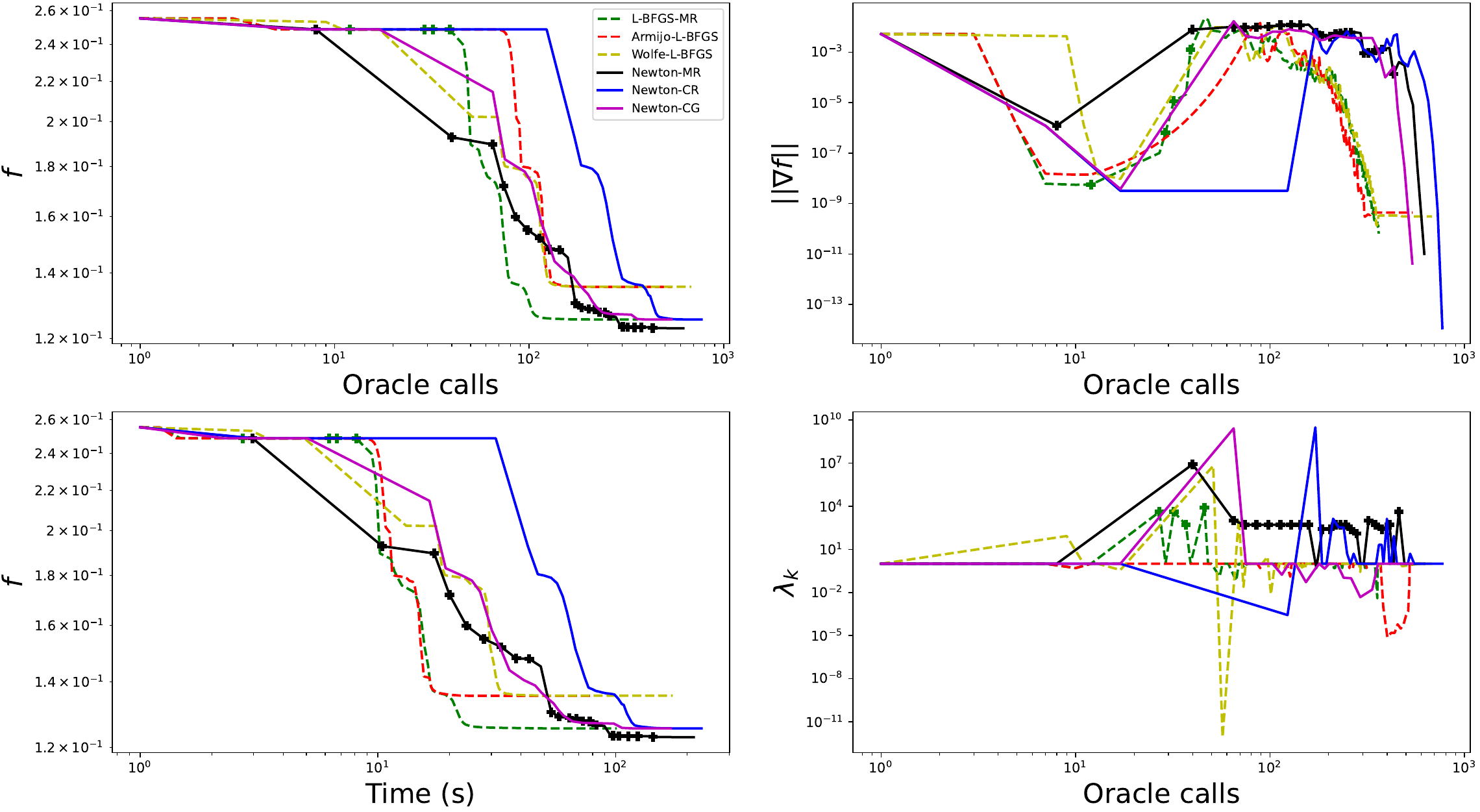}
    \caption{Performance of the different algorithms on the deep auto-encoder problem \eqref{eq:prob-auto} using the \texttt{STL10} dataset. Changes of the function values, the norm of the gradient, and the step sizes w.r.t. to the number of oracle calls and cpu-time are reported. The markers indicate iterations with negative curvature detection.} \label{STL10}
\end{figure} 

The average results for 100 independent runs are reported in \Cref{table: time}. Of the three truncated Newton methods, \textbf{Newton-MR} seems to perform the best w.r.t. the number of iterations, the objective function value, oracle calls, and computational time. Surprisingly, for quasi-Newton methods, only \textbf{L-BFGS-MR} manages to meet the convergence criteria and both \textbf{Armijo-L-BFGS} and \textbf{Wolfe-L-BFGS} stop prematurely due to small stepsizes. Compared to classical L-BFGS methods, \textbf{L-BFGS-MR}--equipped with negative curvature detection and truncation--can cope better with nonconvex landscapes and it achieves lower function values--albeit with slightly higher computational costs than \textbf{Armijo-L-BFGS}. This is also illustrated in \Cref{fig:count}, where we show a histogram/distribution of the optimal function values returned by the different methods. In Figures~\ref{cifar} and \ref{STL10}, we provide more detailed convergence plots comparing function and gradient values and stepsizes with respect to oracle calls and cpu-time. The plots complement our observations from \Cref{table: time} and \Cref{fig:count} and underline the benefits of exploiting NPC information in nonconvex problems--both for Newton and quasi-Newton methodologies.

\subsection{CUTEst test problems}\label{ssec: Num4}
Finally, we report the performance of the different algorithms on 237 test problems from the CUTEst environment. We use performance profiles for our comparison \cite{gould2016note,dolan2002benchmarking}, and follow the setting in \cite{liu2022newton}. We note that it is generally not known whether the test problems are convex or not. In this test, for \textbf{L-BFGS-MR}, we set $\theta_k=\min\{10^{-10}, \log(k)\sqrt{k\|g_k\|}\}$ to further enhance its performance and to exploit more NPC information at earlier iterations.

\begin{figure}[t]
  \centering
  \subfigure[Newton-type and L-BFGS-type ]{
  \includegraphics[width=5.5cm]{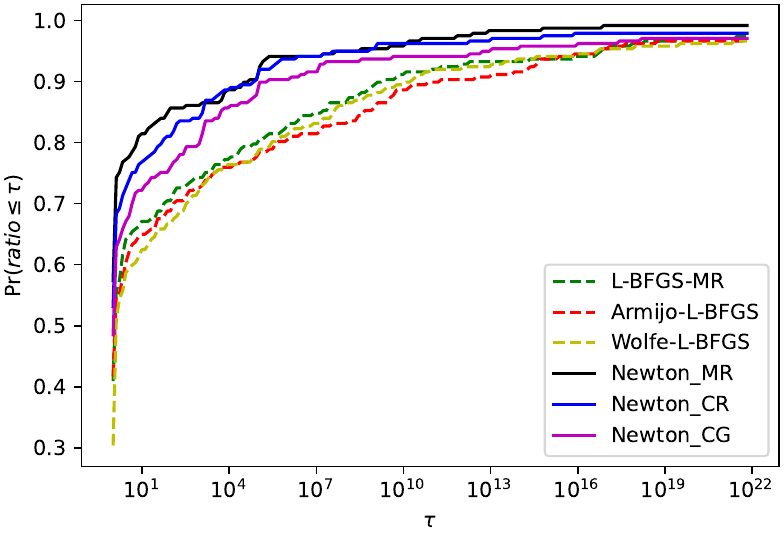}
  	\label{fig:cutest}
  }
  \subfigure[L-BFGS-type]{
  \includegraphics[width=5.5cm]{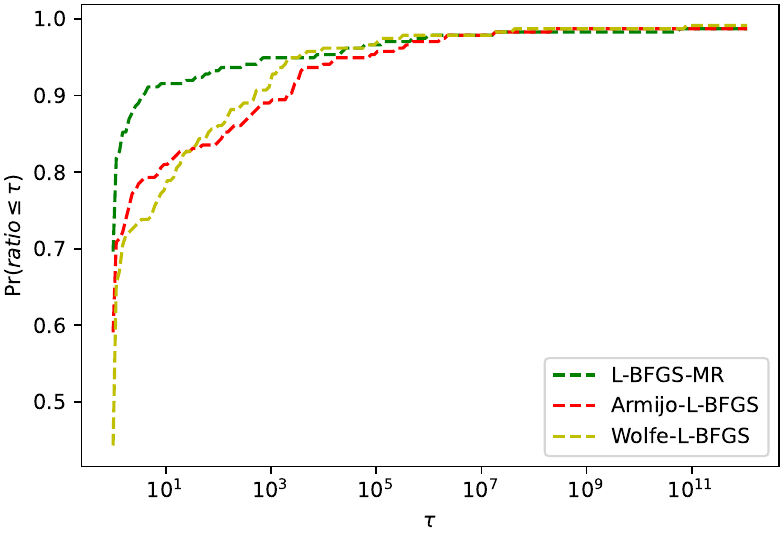}
  	\label{fig:cutestqn}
  }
  \caption{Performance profile in terms of $\{f(x_k)\}$. For a given $\tau$ in the $x$-axis, the corresponding value on the $y$-axis is the proportion of times that a given solver's performance lies within a factor $\tau$ of the best possible performance over all runs.}
  \label{fig: QN}
\end{figure}

We summarize the results in \Cref{fig: QN}. In \Cref{fig:cutest}, across all tested algorithms, \textbf{Newton-MR} is the best and slightly outperforms the other two truncated Newton methods. A more detailed comparison of the three L-BFGS-type algorithms is shown in \Cref{fig:cutestqn}. \textbf{L-BFGS-MR} clearly outperforms \textbf{Armijo-L-BFGS} and \textbf{Wolfe-L-BFGS} in terms of function values $\{f(x_k)\}$. These results further demonstrate the advantage of $\NPC$ detection and using $\MR$ as the inner linear system solver.

\section{Conclusion and Discussions}\label{sec: Con}
This work proposes a MINRES-based linesearch algorithm framework for solving unconstrained nonconvex optimization problems. The approach uses the well-known symmetric linear system solver $\MR$ to generate descent directions and adaptive mechanisms to control regularization, truncation, and acceptance parameters. We present comprehensive convergence guarantees under the Kurdyka-{\L}ojasiewicz property when general Hessian approximations $B_k \approx \nabla^2 f(x_k)$ are used. Moreover, when $B_k$ is the true Hessian, we establish stronger local properties including avoidance of strict saddle points and superlinear convergence to non-isolated minima. Finally, numerical experiments on an auto-encoder problem and the CUTEst problem collection underline the benefits of exploiting NPC information in nonconvex problems.

Due to the equivalence between CR and $\MR$ \cite{lim2024conjugate}, our eigenpair-free approach to avoid strict saddle points remains applicable when $\MR$ is replaced by CR. Extending our strategies to truncated Newton methods using CG is an interesting direction for future research.
Moreover, unlike \cite{rebjock2024fast2}, which requires a sophisticated analysis of CG in cases where the Hessian is merely positive semidefinite and the PL condition holds, this work employs adaptive regularization to ensure positive definiteness of the linear system, thus avoiding such discussions in the local analysis. It would also be valuable to analyze the performance of the $\MR$ steps--without regularization--as in \cite{rebjock2024fast2} and to establish comparable theoretical guarantees. 

\section*{Statements and Declarations} \vspace{-1ex}
\noindent\textbf{Funding and/or Conflicts of interests/Competing interests.}  Andre Milzarek was partly supported by the Shenzhen Science and Technology Program (No$.$ RCYX20221008093033010) and by Shenzhen Stability Science Program 2023, Shenzhen Key Lab of Multi-Modal Cognitive Computing.

\appendix
\section{Proof of \texorpdfstring{\Cref{global_LS_KL_rate}}{Lemma 3.10}} \label{app:lem-rate}

\begin{proof}
We first note that all steps in the proof of \Cref{global_LS_KL_new} are applicable---ensuring $x_k \to x^*$. The derivation of \Cref{global_LS_KL_rate} is a specialization of the proof of \Cref{global_LS_KL_new}. Using $x_k \to x^*$ and \ref{C.3} there are $\bar k\in \mathbb{N}$ and $M >0$ such that $\|\bar B_k\| \le \|B_k\| + \cZ \le M$ and 
    \begin{equation}\label{eq:klg1}
            \varrho_{k} = \varrho(f(x_k) - f(x^*))  = c \Big[ \frac{c(1-\theta)}{\varrho^\prime(f(x_k) - f(x^*))} \Big]^{\frac{1-\theta}{\theta}} \leq c(c(1-\theta))^{\frac{1-\theta}{\theta}} \|g_k\|^{\frac{1-\theta}{\theta}}
    \end{equation}
    for all $k \ge \bar k$. Here, we applied $\varrho(t) = c t^{1-\theta}$ and $\varrho(t) = c [{c(1-\theta)}/{\varrho^\prime(t)}]^{(1-\theta)/\theta}$. 
    When $d_k = p_t$, combining \Cref{lemma: LS_SOL,lemma:stepsize} (i), we have 
    \begingroup
    \allowdisplaybreaks
    \begin{align*}
         \frac{f(x_k)-f(x_{k+1})}{\|g_k\|} &\ge \frac{\sigma \lambda_k \|g_k\| \min\{\cA , a_k \|g_k\|^{\alpha}\}}{\|\bar B_k\| + \|\bar B_k\|^2}\\
         & \ge \sigma \|g_k\|\min\Big\{s, \frac{2\rho(1-\sigma)}{L_g} \min\{\cA, a_k\|g_k\|^\alpha\}\Big\}   \frac{ \min\{\cA , a_k \|g_k\|^{\alpha}\}}{M + M^2} \\
         & \ge  \bar D \|g_k\| \min\{\cA^2, a_k^2\|g_k\|^{2\alpha}\},
       \end{align*}
    \endgroup
    where $\bar D := \sigma \min\{s,  2\rho (1-\sigma)/L_g\}/(M+M^2)$ and $L_g$ denotes the Lipschitz constant of $\nabla f$ on $\mathrm{conv}(\{x_k\})$. When $d_k =  \frac{\|g_k\|}{\|r_{t-1}\|}r_{t-1}$, by \eqref{eq:why-not?}, we obtain
    \begin{equation*}
        {[f(x_k)-f(x_{k+1})]}/{\|g_k\|} \ge 0.5{\sigma\lambda_k^2\zeta_k} \|g_k\| \ge \bar E \|g_k\| \min\{\cZ, z_k\|g_k\|^{\zeta}\}.
    \end{equation*}
    where $\bar E : = \sigma \bar\lambda^2 /2$ 
    and $\bar\lambda$ is introduced in the proof of \Cref{global_LS_KL_new}. Finally, if $d_k = -g_k$, combining \Cref{lemma: LS_GD} (i) and \eqref{eq:lipschitz}, it follows $(f(x_k)-f(x_{k+1}))/\|g_k\| \geq \sigma \min\{s,2\rho(1-\sigma)/L_g\}\|g_k\| =: \bar F \|g_k\|$. Hence, we may infer 
%
    \begin{equation}\label{eq:KL_all2}
        {[f(x_k)-f(x_{k+1})]}/{\|g_k\|}  \ge \bar G \|g_k\| \min\{1, a_k^2\|g_k\|^{2\alpha},z_k\|g_k\|^\zeta\},
    \end{equation}
     for all $k\geq \bar k$, where $\bar G := \min\{\bar D\cA^2,\bar E\min\{1,\cZ\},\bar F\}$. Next, we introduce $\mathcal{I}_1  = \{ k \ge \bar k: 1 < \min\{ a_k^2\|g_k\|^{2\alpha}, z_k\|g_k\|^\zeta\} \}$, $\mathcal{I}_2  = \{ k \ge \bar k : a_k^2\|g_k\|^{2\alpha} < \min\{1, z_k\|g_k\|^\zeta\} \}$, $\mathcal{I}_3 = \{ k \ge \bar k : k \notin \mathcal I_1 \cup \mathcal I_2 \}$, $\Gamma_k  = {\sum}_{i=k}^{\infty} \|g_i\|$, and $\Gamma_{k, \mathcal{I}_j} = {\sum}_{i = k, i \in \mathcal{I}_j}^\infty \|g_i\|$, $j=1,2,3$.
    %
    %
Mimicking \eqref{eq:use-this-one-new}--\eqref{eq:sum1-new} and using \eqref{eq:KL_all2} and the reverse H\"older inequality, we have
    \begin{equation*}
        \begin{aligned}
            {\varrho_k}/{\bar G} &\ge {\sum}_{i=k}^{\infty} {(f(x_i)- f(x_{i+1}))}/{\|g_i\|} \\
            &\ge {\sum}_{i \in \mathcal{I}_1} \|g_i\| +  {\sum}_{i \in \mathcal{I}_2} a_i^2 \|g_i\|^{1+2\alpha} +  {\sum}_{i \in \mathcal{I}_3} z_i \|g_i\|^{1+\zeta} \\
            & \ge \Gamma_{k, \mathcal{I}_1} +  \Big({\sum}_{i\geq k,i \in \mathcal{I}_2} a_i^{-2/(2\alpha)}\Big)^{-2\alpha} \Gamma_{k, \mathcal{I}_2}^{1+2\alpha} + \Big({\sum}_{i\geq k, i \in \mathcal{I}_3} z_i^{-1/\zeta}\Big)^{-\zeta}\Gamma_{k, \mathcal{I}_3}^{1+\zeta}\\
            & = \Gamma_{k, \mathcal{I}_1} + \sigma_{1,k}^{-2\alpha} \Gamma_{k, \mathcal{I}_2}^{1+2\alpha} + \sigma_{2,k}^{-\zeta} \Gamma_{k, \mathcal{I}_3}^{1+\zeta}.
        \end{aligned}
    \end{equation*}
    Here,  $\sigma_{1,k}$ and $\sigma_{2,k}$ are defined in \eqref{eq:defsigma-new}. Moreover, since $\varrho_k \to 0$ as $k \to \infty$, there exist some $k' \ge \bar k$ and some constant $\bar G^\prime>0$ such that for all $k \ge k'$, it holds that
    \begin{align} \label{eq:klg2}
        \Gamma_k 
        & \le \frac{\varrho_k}{\bar G} + \Big(\frac{\varrho_k \sigma_{1,k}^{2\alpha}}{\bar G}\Big)^{\frac{1}{1+2\alpha}} +  \Big(\frac{\varrho_k \sigma_{2,k}^\zeta}{\bar G}\Big)^{\frac{1}{1+\zeta}} \le \Big(\frac{\varrho_k}{\bar G^\prime}\Big)^{\frac{1}{1+\max\{2\alpha, \zeta\}}}.
    \end{align}
    Setting $\omega = \frac{(1+\max\{2\alpha, \zeta\})\theta}{1-\theta}$, $C_\theta = [(\bar G^\prime/c)^{\theta/(1-\theta)}/(c(1-\theta))]^{-\frac{1}{\omega}}$ and combining \eqref{eq:klg1}, \eqref{eq:klg2}, this yields
    $$ \Gamma_k^{-\omega}(\Gamma_k - \Gamma_{k+1}) \ge C_\theta^{-\omega}. $$
    This recursion allows us to derive the stated rate for $\|g_k\|$. We omit detailed computations and refer to \cite[Theorem 5.5, part (ii)]{ouyang2021trust}. (\cite{ouyang2021trust} handles the special case $\max\{2\alpha, \zeta\}=0$).
\end{proof}

\section{Proof of \texorpdfstring{\Cref{lem:auxiliary}}{Lemma 3.16}} \label{app:auxiliary}
\begin{proof}
By assumption, there is some sequence $\{w_k\}$ such that $u_k \le w_k$ and $w_k \to 0$ q-linearly. Hence, there exist some $k_1 \in \mathbb{N}$ and $\mu \in (0,1)$ such that ${w_{k+1}}/{w_{k}} \le \mu$ for $k \ge k_1$. Then due to $v_{k+1}/v_{k} \to 1$, for any $q\in (\mu,1)$, there exists $k_2 \in  \mathbb{N}$ such that ${v_{k+1}}/{v_{k}} \le q/\mu$ for $k \ge k_2$. Thus, for all $k \ge \max\{k_1, k_2\}$, we have $\frac{w_{k+1}v_{k+1}}{w_kv_k} \le q <1$ and we can infer $ \lim_{k \to \infty}  w_k v_k= 0$. Finally, by $u_k \le w_k$, this yields $\lim_{k \to \infty} u_k v_k =0$.
\end{proof}

\bibliographystyle{spmpsci}      
\bibliography{Commonbib}    


\end{document}